%% file: main.tex
\documentclass{article}

\usepackage[skip=10pt plus1pt, indent=0pt]{parskip}
\usepackage{setspace}

\usepackage[utf8]{inputenc}
\usepackage[T1]{fontenc}

\usepackage{mathptmx}          % legacy Times math (kept intentionally)
\usepackage[sc]{mathpazo}      % Palatino (takes precedence)
\usepackage{microtype}

\usepackage{amsmath}
\usepackage{amssymb}
\usepackage{amsfonts}
\usepackage{mathtools}
\usepackage[fleqn]{nccmath}

\usepackage{graphicx}
\usepackage{grffile}
\usepackage{float}
\usepackage{adjustbox}
\usepackage{ragged2e}
\usepackage{subfig}
\usepackage{xcolor}
\graphicspath{{imgs/}}
\usepackage{booktabs}
\usepackage{array}
\usepackage{tabularx}
\usepackage{multirow}

\newcolumntype{L}[1]{>{\raggedright\arraybackslash}p{#1}}
\newcolumntype{Y}{>{\raggedright\arraybackslash}X}
\usepackage[hmarginratio=1:1,top=23mm,columnsep=20pt]{geometry}

\usepackage{enumitem}
\usepackage{abstract}

\usepackage{caption}
\usepackage{subcaption}
\usepackage{titlesec}
\usepackage{fancyhdr}

\titleformat{\section}[block]
  {\large\scshape\centering}{\thesection.}{1em}{}

\usepackage[linesnumbered,ruled,vlined]{algorithm2e}
\SetKwInput{KwInitialization}{Initialization}
\SetKwInput{KwOutput}{Output}

\usepackage{pgfplots}
\pgfplotsset{compat=newest}

\usepgfplotslibrary{
  groupplots,
  polar,
  smithchart,
  statistics,
  dateplot,
  ternary,
  patchplots,
  fillbetween
}

\usetikzlibrary{
  arrows.meta,
  backgrounds
}

\usepackage[
  backend=biber,
  style=numeric-comp,   % numbered, consecutive cites compressed (e.g. [3--5]) -- Springer-like
  sorting=none,         % order of appearance (citation order)
  giveninits=true,      % surname + initials, e.g. "Fukushima, M."
  maxbibnames=99,       % list all authors
  doi=true,             % show DOIs
  isbn=false,           % omit ISBNs (Springer reference lists do not print them)
  url=false             % hide URLs (DOI suffices)
]{biblatex}

\renewbibmacro*{in:}{%
  \ifentrytype{article}{}{\printtext{\bibstring{in}\intitlepunct}}}

\AtEveryBibitem{\clearlist{language}}

\AtEveryBibitem{\clearfield{note}}

\usepackage[affil-sl]{authblk}

\usepackage{hyperref}

\newtheorem{theorem}{Theorem}[section]
\newtheorem{lemma}[theorem]{Lemma}
\newtheorem{proposition}[theorem]{Proposition}
\newtheorem{corollary}[theorem]{Corollary}
\newtheorem{definition}[theorem]{Definition}
\newtheorem{remark}[theorem]{Remark}
\newtheorem{example}[theorem]{Example}
\newtheorem{as}[theorem]{Assumption}

\newtheorem{problem}{Problem}

\def\QED{~\rule[-1pt]{6pt}{6pt}\par\medskip}

\newenvironment{proof}{{\bf Proof.\ }}{\hfill\QED}

\newcommand{\R}{\mathbb{R}}
\DeclareMathOperator{\argmin}{arg\,min}
\DeclareMathOperator{\argmax}{arg\,max}
\DeclareMathOperator{\dist}{dist}
\DeclareMathOperator{\prox}{prox}
\newcommand{\inner}[2]{\langle #1,\, #2\rangle}
\newcommand{\norm}[1]{\lVert #1 \rVert}
\newcommand{\Sol}{S}                 % solution set
\newcommand{\Dm}{\mathcal{D}}        % scaling matrix
\newcommand{\Dfam}{\mathfrak{D}}     % scaling family
\newcommand{\proj}{P}                % metric projection

\makeatletter
\newcommand*{\inlineequation}[2][]{%
  \begingroup
    \refstepcounter{equation}%
    \ifx\\#1\\\else\label{#1}\fi
    \relpenalty=10000
    \binoppenalty=10000
    \ensuremath{#2}~~~\@eqnnum
  \endgroup
}
\makeatother

\date{}

\begin{document}
\title{\large A Variable-Metric Non-monotone Line Search Method for Mixed Variational Inequalities and Equilibrium Problems}
\author[1,2]{Mohammed Alshahrani}
\affil[1]{Department of Mathematics, King Fahd University of Petroleum \& Minerals (KFUPM), Dhahran 31261, Saudi Arabia}
\affil[2]{Interdisciplinary Research Center (IRC) for Smart Mobility and Logistics, King Fahd University of Petroleum \& Minerals (KFUPM), Dhahran 31261, Saudi Arabia}

\maketitle

\begin{abstract}
% Abstract — theory plus a general, number-free numerical sentence (S10).
We propose a scaled gap-function method for variational inequalities,
mixed variational inequalities, and equilibrium problems over a closed convex set. The method drives
a gap function to zero by combining a variable-metric scaled projection step with a modified
non-monotone Armijo line search. The construction rests on a structural identity: the scaled projection step is exactly
the maximizer that defines the Fukushima regularized gap, so the search direction is simultaneously the
algorithmic step and the gap-defining direction. For variational and mixed variational inequalities with
a strongly monotone, Lipschitz operator we establish global convergence and an R-linear rate, under a
fixed or a controlled-change variable metric. The rate follows from the strong-monotonicity contraction and
the resulting explicit gap error bound. For equilibrium problems we obtain global convergence
and a gap error bound (Mastroeni's gap). Numerical experiments on controlled problems confirm the
convergence guarantees and the predicted rate, and compare the method with standard extragradient and
gap-descent methods. The combination of variable-metric scaling, a modified
non-monotone line search, and gap-function descent appears to be new for these problem classes.
\end{abstract}
\textbf{Keywords}: mixed variational inequality; equilibrium problem; gap function;
variable metric; non-monotone line search.

\textbf{AMS Subject Classification}: 47J20; 65K15; 90C33; 49J40; 65K10.

%% === PAPER BODY ===

% ============================================================================
\section{Introduction}\label{sec:intro}
% ============================================================================

Let $K\subseteq\R^n$ be nonempty, closed, and convex, and let $F\colon K\to\R^n$ be an operator.
The \emph{variational inequality} problem $\mathrm{VI}(F,K)$ is to find $x^*\in K$ such that
\begin{equation}\tag{$\mathrm{VI}(F,K)$}\label{eq:vi}
  \inner{F(x^*)}{y-x^*}\ge 0 \qquad \forall\, y\in K .
\end{equation}
Adding a proper, convex, lower semicontinuous term $\varphi\colon K\to\R\cup\{+\infty\}$ gives the
\emph{mixed variational inequality} $\mathrm{MVI}(F,\varphi,K)$: find $x^*\in K$ with
\begin{equation}\tag{$\mathrm{MVI}(F,\varphi,K)$}\label{eq:mvi}
  \inner{F(x^*)}{y-x^*}+\varphi(y)-\varphi(x^*)\ge 0 \qquad \forall\, y\in K .
\end{equation}
More generally, a bifunction $G\colon K\times K\to\R$ with $G(x,x)=0$ defines the
\emph{equilibrium problem} $\mathrm{EP}(G,K)$ (in the sense of Blum--Oettli~\cite{Blum1994}): find $x^*\in K$ with
\begin{equation}\tag{$\mathrm{EP}(G,K)$}\label{eq:ep}
  G(x^*,y)\ge 0 \qquad \forall\, y\in K .
\end{equation}
Problem~\ref{eq:ep} subsumes the other two: taking $G(x,y)=\inner{F(x)}{y-x}$ recovers~\ref{eq:vi},
and adding $\varphi(y)-\varphi(x)$ recovers~\ref{eq:mvi}. It also unifies convex optimization, saddle-point
problems, Nash equilibria, and complementarity problems within a single format~\cite{Facchinei2004,Pappalardo2014}.

A recurring difficulty in these problems is that the natural projection map $x\mapsto\proj_K(x-\alpha F(x))$
is a contraction only when $F$ is strongly monotone and Lipschitz continuous. For merely monotone or
pseudomonotone $F$ it is at best nonexpansive, so the plain projection iteration can cycle and fail to
converge. Remedies include
extragradient steps~\cite{Korpelevich1976,Censor2011}, projection-and-contraction
schemes~\cite{Solodov1999}, and reformulations through \emph{gap} functions whose global
minimizers are exactly the solutions~\cite{Fukushima1992,Mastroeni2003}.

Ansari et al.~\cite{Ansari2026} introduced the scaled gradient modified
non-monotone line search method for the constrained optimization problem
$\min_{x\in K} f(x)$. Their method combines (i) a \emph{variable-metric scaled projection}
step
\begin{equation}\label{eq:sgm-step}
  y_k=\proj_{K,\Dm_k^{-1}}\!\big(x_k-\alpha_k\Dm_k\nabla f(x_k)\big),\qquad d_k=y_k-x_k,
\end{equation}
where $\Dm_k$ is a symmetric positive definite scaling matrix and $\proj_{K,\Dm_k^{-1}}$ is the
projection in the metric induced by $\Dm_k^{-1}$; and (ii) a \emph{modified non-monotone Armijo line
search}
\begin{equation}\label{eq:sgm-ls}
  f(x_k+\lambda_k d_k)\le \mathcal T_k+\delta_1\lambda_k\inner{\nabla f(x_k)}{d_k}
  -\delta_2\lambda_k^2\norm{d_k}^2,
\end{equation}
Here the reference value $\mathcal T_k$ is a moving average of past objective
values~\cite{Zhang2004,Gu2008}, and the term $-\delta_2\lambda_k^2\norm{d_k}^2$
is a quadratic correction in the spirit of~\cite{Ou2016,Shi2011}. Under standard
assumptions the method converges globally for pseudoconvex $f$ and attains an \emph{R-linear} rate when $f$
is strongly quasiconvex. The matrix $\Dm_k$ defines the variable metric of the step.
Equation~\eqref{eq:sgm-step} premultiplies the gradient by $\Dm_k$ and projects in the metric induced by
$\Dm_k^{-1}$. It reduces to the Euclidean projected-gradient step when $\Dm_k=I$, and to a preconditioned
(quasi-Newton or spectral) step when $\Dm_k$ approximates $[\nabla^2 f(x_k)]^{-1}$. The non-monotone
test enforces sufficient decrease against the moving-average reference $\mathcal T_k$ rather than against
$f(x_k)$. This allows non-monotone iterates with $f(x_{k+1})>f(x_k)$~\cite{Grippo1986}.

The optimality condition for $\min_{x\in K}f$ at a pseudoconvex $f$ is precisely $\mathrm{VI}(\nabla f,K)$,
and the step~\eqref{eq:sgm-step} is the scaled fixed-point map of that variational inequality. Their scheme is therefore a
variational-inequality method whose operator is the gradient $\nabla f$.

We remove the gradient assumption. We replace $\nabla f$ by a general operator $F$ (or by a bifunction
$G$) and replace the objective $f$ by a \emph{gap function} $g_\alpha$. This yields a method for
$\mathrm{VI}$, $\mathrm{MVI}$, and $\mathrm{EP}$ that keeps the scaled projection and the modified non-monotone line
search intact. The key is an identity, made precise in
Section~\ref{sec:formulation}. The Fukushima regularized gap~\cite{Fukushima1992} in the scaled metric,
\[
  g_\alpha(x)=\max_{y\in K}\Big\{\inner{F(x)}{x-y}-\tfrac{1}{2\alpha}\norm{y-x}_{\Dm^{-1}}^2\Big\},
\]
is uniquely maximized at the scaled projection point $y_\alpha(x)=\proj_{K,\Dm^{-1}}(x-\alpha\Dm F(x))$
of~\eqref{eq:sgm-step}~\cite{Fukushima1992}. Hence the scaled-projection direction $d=y_\alpha(x)-x$ is
simultaneously the algorithmic step and the gap-defining direction, and the iteration becomes a descent method
for $g_\alpha$. The gap function $g_\alpha$ is nonnegative on $K$ and vanishes exactly at the solutions, so
driving $g_\alpha\to 0$ solves the problem.

Our method draws on three lines of work---variable-metric scaling, the modified non-monotone line
search, and gap-function descent. Each is mature on its own, but to our knowledge they have not been
combined for $\mathrm{VI}$, $\mathrm{MVI}$, and $\mathrm{EP}$.

Scaled gradient projection originated in imaging and constrained
optimization~\cite{Bonettini2009} and underlies variable-metric operator
splitting for monotone inclusions~\cite{Combettes2014,Lorenz2015}. Non-Euclidean
metrics also appear in extragradient schemes for generalized-monotone variational
inequalities~\cite{Dang2015}. These works use scaling, but they pair it with neither a modified
non-monotone line search nor a gap function.

The non-monotone Armijo technique has been developed almost entirely
within smooth optimization. In the variational-inequality and equilibrium
literature the adjective ``non-monotone'' almost always refers to the \emph{operator} or
\emph{bifunction} class, not to a step-size rule~\cite{Abdi2017,Deng2021}.
Gap-descent methods for these problems use exact or standard monotone line
searches~\cite{Fukushima1992,Mastroeni2003,Bigi2015}. The closest precedent is Crisci
et al.~\cite{Crisci2022}, who join scaling with a non-monotone Armijo rule. Their method remains
within optimization and uses no operator and no gap function.

Gap and D-gap functions reformulate variational inequalities
and equilibrium problems as differentiable optimization
problems~\cite{Fukushima1992,Peng1997,Yamashita1997,Mastroeni2003,Bigi2015},
and an error bound of the form $g_\alpha(x)\ge\tau\,\dist(x,\Sol)^2$ converts gap decrease into a
linear rate~\cite{Pang1997,Facchinei2004}. Such gap-error-bound linear rates are
currently established only for variational inequalities~\cite{Li2025,Zhao2025b}. For
equilibrium problems a genuine geometric (q-linear) rate is available through the proximal method of
Iusem et al.~\cite{Iusem2024}. That rate comes from a proximal contraction
rather than a gap error bound. The related extragradient method of
Lara et al.~\cite{Lara2024} provides only a non-geometric ``type of'' rate under diminishing
steps. The equilibrium D-gap error-bound method of~\cite{Zhang2010} proves global
convergence without a rate. An R-linear rate for equilibrium problems through a gap error bound is
therefore open.

We work under strong monotonicity and make the following contributions:
\begin{enumerate}[leftmargin=2em]
  \item A unified \emph{scaled fixed-point} and \emph{gap-function} characterization of
  $\mathrm{VI}$, $\mathrm{MVI}$, and $\mathrm{EP}$ (Section~\ref{sec:formulation}), built on the identity that the
  scaled projection is the maximizer defining the regularized gap. The equilibrium case follows the
  gap framework of Mastroeni~\cite{Mastroeni2003}.
  \item The algorithm (Section~\ref{sec:algorithm}): a variable-metric scaled (generalized)
  projection step with a modified non-monotone Armijo line search on the gap function. For $\mathrm{VI}$,
  and for $\mathrm{EP}$ through the Mastroeni gap, the smooth gap drives the line search. For
  $\mathrm{MVI}$, the scaled resolvent contraction handles the nonsmooth gap.
  \item Global convergence ($g_\alpha(x_k)\to0$, $\norm{d_k}\to0$, $x_k\to x^*$) for the strongly
  monotone variational inequality (Theorem~\ref{thm:global}), for mixed variational inequalities via the
  scaled resolvent (Theorem~\ref{thm:mvi}), and---through the Mastroeni gap and its error bound---for
  equilibrium problems (Theorem~\ref{thm:ep}).
  \item An R-linear rate for the strongly monotone variational inequality
  (Theorem~\ref{thm:rate}), obtained through the strong-monotonicity contraction---equivalently a gap
  error bound---without assuming the gap inherits strong quasiconvexity, which in general it does not
  (Remark~\ref{rem:sq}). The same rate holds, at the same ratio, under a controlled-change variable
  metric (Theorem~\ref{thm:varmetric}). It also extends to mixed variational inequalities
  (Theorem~\ref{thm:mvi}), where the iterate/distance rate is unconditional and the gap-function rate
  requires $\partial\varphi$ bounded along the iterates.
  \item A numerical study (Section~\ref{sec:experiments}) on controlled problems with known constants,
  confirming the R-linear rate, global convergence from every start, the order-of-magnitude gain of the
  variable metric on an ill-conditioned operator, and the gap/D-gap merit equivalence. Against the
  extragradient method on variational inequalities and gap-descent schemes on equilibrium problems, the
  method is slower on variational inequalities and competitive on equilibrium and mixed problems. Its
  contribution is one method and one analysis across all three classes, not the fastest solver for any
  single class.
\end{enumerate}

The paper is organized as follows. Section~\ref{sec:prelim} fixes notation and collects the scaling,
monotonicity, gap, and error-bound tools. Section~\ref{sec:formulation} establishes the scaled
fixed-point and gap-function characterizations that unify the three problems. Section~\ref{sec:algorithm}
presents the algorithm and its standing assumptions. Section~\ref{sec:convergence} proves global
convergence and the R-linear rate and illustrates them on four worked examples. Section~\ref{sec:experiments}
reports the numerical experiments, and Section~\ref{sec:conclusion} concludes.

% ============================================================================
\section{Preliminaries}\label{sec:prelim}
% ============================================================================

Throughout, $\inner{\cdot}{\cdot}$ is the Euclidean inner product and $\norm{\cdot}$ the induced
norm, and $\Dfam^+$ the set of symmetric positive definite matrices. For $\Dm\in\Dfam^+$ we write
$\norm{x}_{\Dm}=\sqrt{\inner{x}{\Dm x}}$.
The solution set of the problem under consideration---\ref{eq:vi}, \ref{eq:mvi}, or
\ref{eq:ep}---is denoted $\Sol$, and $\dist(x,\Sol)=\inf_{z\in\Sol}\norm{x-z}$.
For $a,b\in\R^n$, $[a,b]=\{(1-t)a+tb:t\in[0,1]\}$ is the closed segment joining them, and
$\operatorname{conv}E$ the convex hull of a set $E\subseteq\R^n$.
A map $F\colon K\to\R^n$ is of class $C^1$ if it is continuously differentiable on $K$,
with Jacobian $\nabla F$, and of class $C^{1,1}$ if, in addition, $\nabla F$ is locally Lipschitz on $K$;
the same terminology applies to a scalar function $g\colon K\to\R$, whose derivative is the gradient
$\nabla g$.
For sequences $a_N\ge0$ and $b_N>0$, we write $a_N=O(b_N)$ if $a_N\le C\,b_N$ for some constant $C>0$
and all sufficiently large $N$.

\subsection{Scaling and scaled projection}\label{ss:scaling}

The method works in a variable metric. We fix the family of scaling matrices and the scaled projection
and proximal operators it induces.

\begin{definition}[\cite{Bonettini2015,Combettes2014}]\label{def:scaling}
For $\mu\ge1$, let $\Dfam_\mu$ be the set of $\Dm\in\Dfam^+$ whose eigenvalues lie in $[1/\mu,\mu]$.
\end{definition}

\begin{remark}\label{rem:scaling}
For $\Dm\in\Dfam_\mu$ one has $\norm{\Dm}\le\mu$, $\norm{\Dm^{-1}}\le\mu$, and
\[
  \tfrac1\mu\norm{x}^2\ \le\ \norm{x}_{\Dm}^2\ \le\ \mu\norm{x}^2\qquad\text{for all }x .
\]
\end{remark}

\begin{definition}\label{def:proj}
For $\Dm\in\Dfam^+$ and $x\in\R^n$, $\proj_{K,\Dm}(x)$ is the unique minimizer of $\norm{y-x}_{\Dm}$ over $y\in K$.
\end{definition}

\begin{remark}\label{rem:proj}
The projection $\proj_{K,\Dm}(x)$ is characterized by
\begin{equation}\label{eq:proj-vi}
  \inner{\proj_{K,\Dm}(x)-x}{\,\Dm\,(\proj_{K,\Dm}(x)-y)}\le 0\qquad\forall\, y\in K,
\end{equation}
and is Lipschitz with constant $\mu$ on $\Dfam_\mu$~\cite{Bauschke2017}.
\end{remark}

\begin{definition}\label{def:prox}
For $\Dm\in\Dfam^+$, a proper convex lower semicontinuous $h\colon\R^n\to(-\infty,+\infty]$, and $z\in\R^n$,
the \emph{scaled proximal operator} in the $\Dm$-norm is
\[
  \prox^{\Dm}_h(z)=\argmin_{y\in\R^n}\Big\{h(y)+\tfrac12\norm{y-z}_{\Dm}^2\Big\}.
\]
The objective is strongly convex, so the minimizer exists and is unique. Choosing $h=\iota_K$, the indicator
of $K$ ($\iota_K(y)=0$ for $y\in K$ and $+\infty$ otherwise), recovers the metric projection
$\proj_{K,\Dm}=\prox^{\Dm}_{\iota_K}$.
\end{definition}

\subsection{Generalized monotonicity}\label{ss:mono}

The convergence results assume monotonicity properties of the operator $F$ and the bifunction $G$,
which we collect next.

\begin{definition}[\cite{Facchinei2004,Bauschke2017}]\label{def:mono}
Let $F\colon K\to\R^n$. Then $F$ is:
\begin{itemize}[leftmargin=2em]
  \item \emph{monotone} if $\inner{F(x)-F(y)}{x-y}\ge0$;
  \item \emph{pseudomonotone} if $\inner{F(y)}{x-y}\ge0\Rightarrow\inner{F(x)}{x-y}\ge0$;
  \item \emph{strongly monotone} with modulus $m>0$ if $\inner{F(x)-F(y)}{x-y}\ge m\norm{x-y}^2$;
  \item \emph{strongly pseudomonotone} with modulus $\gamma>0$ if
  $\inner{F(y)}{x-y}\ge0\Rightarrow\inner{F(x)}{x-y}\ge\gamma\norm{x-y}^2$;
  \item \emph{$L$-Lipschitz} if $\norm{F(x)-F(y)}\le L\norm{x-y}$,
\end{itemize}
in each case for all $x,y\in K$.
\end{definition}

\begin{definition}[\cite{Blum1994,Mastroeni2003,Lara2022}]\label{def:bifun}
Let $G\colon K\times K\to\R$ with $G(x,x)=0$. Then $G$ is:
\begin{itemize}[leftmargin=2em]
  \item \emph{monotone} if $G(x,y)+G(y,x)\le0$;
  \item \emph{strongly monotone} with modulus $c>0$ if $G(x,y)+G(y,x)\le-c\norm{x-y}^2$;
  \item \emph{pseudomonotone} if $G(x,y)\ge0\Rightarrow G(y,x)\le0$;
  \item satisfies the \emph{Lipschitz-type condition}~\cite{QuocTran2008} if there are $c_1,c_2>0$ with
  \[
    G(x,y)+G(y,z)\ \ge\ G(x,z)-c_1\norm{x-y}^2-c_2\norm{y-z}^2 ;
  \]
  \item \emph{strongly quasiconvex} in its second argument with modulus $\gamma>0$ if, for all
  $y,z\in K$ and $t\in[0,1]$,
  \[
    G\big(x,ty+(1-t)z\big)\ \le\ \max\{G(x,y),G(x,z)\}-\tfrac{\gamma}{2}\,t(1-t)\norm{y-z}^2 .
  \]
\end{itemize}
\end{definition}

\subsection{Gap functions}\label{ss:gap}

Since VI, MVI, and EP are not optimization problems, the modified non-monotone line search is driven by a
\emph{gap} function in place of an objective.

\begin{definition}[\cite{Mastroeni2003}]\label{def:gapfn}
A function $p\colon K\to\R$ is a gap function for the problem under consideration if $p(x)\ge0$ for all
$x\in K$ and $p(x)=0$ if and only if $x\in\Sol$.
\end{definition}

In the scaled metric we use the Fukushima regularized gap for \ref{eq:vi} and its Mastroeni form for
\ref{eq:ep} (Proposition~\ref{prop:epgap}).

\begin{proposition}[Fukushima regularized gap; cf.~\cite{Fukushima1992}]\label{prop:gap}
For $\alpha>0$ and $\Dm\in\Dfam^+$, the scaled Fukushima regularized gap
\begin{equation}\label{eq:gap}
  g_\alpha(x)=\max_{y\in K}\Big\{\inner{F(x)}{x-y}-\tfrac{1}{2\alpha}\norm{y-x}_{\Dm^{-1}}^2\Big\}
\end{equation}
has the following properties:
\begin{enumerate}[label=(\roman*)]
  \item it is a gap function for \ref{eq:vi};
  \item the maximum is attained at the unique point
  \begin{equation}\label{eq:gap-max}
    y_\alpha(x)=\proj_{K,\Dm^{-1}}\!\big(x-\alpha\Dm F(x)\big),
  \end{equation}
  the scaled projection step~\eqref{eq:sgm-step}, and, with $d=y_\alpha(x)-x$,
  \begin{equation}\label{eq:gap-lb}
    g_\alpha(x)\ \ge\ \tfrac{1}{2\alpha}\norm{d}_{\Dm^{-1}}^2 ;
  \end{equation}
  \item for fixed $\Dm$ and $F\in C^1$, we have $g_\alpha\in C^1$ with
  \[
    \nabla g_\alpha(x)=F(x)-\big(\nabla F(x)^\top-\tfrac1\alpha\Dm^{-1}\big)\,d ,
  \]
  and the descent identity
  \begin{equation}\label{eq:gap-descent}
    \inner{\nabla g_\alpha(x)}{d}\ \le\ -\,\inner{d}{\nabla F(x)\,d}
  \end{equation}
  holds, so $d$ is a descent direction for $g_\alpha$ whenever the symmetric part of $\nabla F(x)$ is
  positive definite.
\end{enumerate}
\end{proposition}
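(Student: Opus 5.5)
The proof turns on the strongly concave structure of the inner objective. Set
\[
\phi(x,y)=\inner{F(x)}{x-y}-\tfrac{1}{2\alpha}\norm{y-x}_{\Dm^{-1}}^2 .
\]
For fixed $x$, $\phi(x,\cdot)$ is strongly concave on the closed convex set $K$, so it has a unique maximizer. That gives existence and uniqueness in (ii) and finiteness of $g_\alpha$.

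To identify the maximizer, we complete the square in the $\Dm^{-1}$-norm. Write $F(x)=\tfrac1\alpha\Dm^{-1}(\alpha\Dm F(x))$; then
\[
\phi(x,y)=\tfrac{\alpha}{2}\norm{F(x)}_{\Dm}^2-\tfrac{1}{2\alpha}\norm{y-(x-\alpha\Dm F(x))}_{\Dm^{-1}}^2 .
\]
Maximizing over $K$ is therefore minimizing the $\Dm^{-1}$-distance to $x-\alpha\Dm F(x)$, which by Definition~\ref{def:proj} yields \eqref{eq:gap-max}.

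For \eqref{eq:gap-lb}, we apply \eqref{eq:proj-vi} with metric $\Dm^{-1}$, point $z=x-\alpha\Dm F(x)$, and test point $x\in K$. This gives
\[
\inner{d+\alpha\Dm F(x)}{\Dm^{-1}d}\le 0,
\qquad\text{i.e.}\qquad
\inner{F(x)}{x-y_\alpha(x)}\ge\tfrac1\alpha\norm{d}_{\Dm^{-1}}^2 .
\]
Substituting into $g_\alpha(x)=\phi(x,y_\alpha(x))$ gives $g_\alpha(x)\ge\tfrac1\alpha\norm d^2_{\Dm^{-1}}-\tfrac1{2\alpha}\norm d^2_{\Dm^{-1}}$, which is \eqref{eq:gap-lb}.

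For (i), note first that $g_\alpha(x)\ge\phi(x,x)=0$. If $g_\alpha(x)=0$, then \eqref{eq:gap-lb} forces $d=0$, so $x=\proj_{K,\Dm^{-1}}(x-\alpha\Dm F(x))$. Then \eqref{eq:proj-vi} reads $\inner{\alpha\Dm F(x)}{\Dm^{-1}(x-y)}\le0$, i.e. $\inner{F(x)}{y-x}\ge0$ for all $y\in K$, so $x\in\Sol$. Conversely, if $x\in\Sol$, then for every $y\in K$ we have $\phi(x,y)\le-\tfrac1{2\alpha}\norm{y-x}^2_{\Dm^{-1}}\le0$, so $g_\alpha(x)=0$.

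For (iii), we use Danskin's theorem for a maximum over $y\in K$ with a unique maximizer. Uniqueness is already established, and continuity of $y_\alpha$ follows from the Lipschitz property in Remark~\ref{rem:proj} composed with continuous $F$. Strictly, Danskin needs local compactness of the feasible set. The plan is to localize: near $x$, the maximizer stays in a compact ball around $y_\alpha(x)$, and $K$ intersected with that ball is compact. This is the step needing care, and I expect it to be the main obstacle; Fukushima's original argument handles it identically. Danskin then gives $\nabla g_\alpha(x)=\nabla_x\phi(x,y)|_{y=y_\alpha(x)}$. Differentiating in $x$,
\[
\nabla_x\phi = F(x)+\nabla F(x)^\top(x-y)+\tfrac1\alpha\Dm^{-1}(y-x),
\]
and at $y=x+d$ this is $F(x)-(\nabla F(x)^\top-\tfrac1\alpha\Dm^{-1})d$, as claimed. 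Continuity of this expression gives $C^1$.

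The descent identity then follows by pairing the gradient with $d$:
\[
\inner{\nabla g_\alpha(x)}{d}=\inner{F(x)}{d}+\tfrac1\alpha\norm d^2_{\Dm^{-1}}-\inner{d}{\nabla F(x)d}.
\]
From the projection inequality above, $\inner{F(x)}{d}\le-\tfrac1\alpha\norm d^2_{\Dm^{-1}}$, so the first two terms sum to at most $0$, which is \eqref{eq:gap-descent}. When the symmetric part of $\nabla F(x)$ is positive definite, the right-hand side is negative for $d\ne0$, so $d$ is a descent direction.
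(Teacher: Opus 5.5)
Your proposal is correct. For \eqref{eq:gap-lb} it is exactly the paper's argument: \eqref{eq:proj-vi} in the $\Dm^{-1}$-metric with comparison point $x$ gives $\norm{d}_{\Dm^{-1}}^2+\alpha\inner{F(x)}{d}\le0$, which is then substituted into $g_\alpha(x)=\phi(x,y_\alpha(x))$.

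For (i), the maximizer identity, and (iii), the paper gives no argument of its own. It notes that $g_\alpha$ is Fukushima's regularized gap with the symmetric positive definite matrix $G=\tfrac{1}{\alpha}\Dm^{-1}$, and cites his Theorem~3.1, eq.~(3.2), Theorem~3.2 and Proposition~4.1. You instead reprove these directly in the scaled metric: completing the square gives the maximizer, \eqref{eq:gap-lb} gives the shortcut $g_\alpha(x)=0\Rightarrow d=0$, and Danskin together with the same projection inequality gives \eqref{eq:gap-descent}. This costs length, but it makes the proof self-contained and shows that a single projection inequality drives (i), \eqref{eq:gap-lb} and \eqref{eq:gap-descent}.

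Your localization for Danskin is sound. Your own completing-the-square identity removes the need for it, though. It reads $g_\alpha(x)=\tfrac{\alpha}{2}\norm{F(x)}_{\Dm}^2-\tfrac{1}{\alpha}h\big(x-\alpha\Dm F(x)\big)$ with $h(z)=\tfrac12\min_{y\in K}\norm{y-z}_{\Dm^{-1}}^2$. Since $h$ is $C^1$ on all of $\R^n$ with $\nabla h(z)=\Dm^{-1}\big(z-\proj_{K,\Dm^{-1}}(z)\big)$, the chain rule yields the stated formula $F(x)-\big(\nabla F(x)^\top-\tfrac{1}{\alpha}\Dm^{-1}\big)d$ at once, with continuity evident.

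In either route, differentiability at boundary points of $K$ tacitly requires $F$ to be $C^1$ on an open set containing $K$; the paper's statement shares this caveat.
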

\begin{proof}
With $G=\tfrac1\alpha\Dm^{-1}$, $g_\alpha$ is the regularized gap of~\cite{Fukushima1992}. There, (i) is
Theorem~3.1, the maximizer in~(ii) is the projection step (eq.~3.2), and (iii) is Theorem~3.2 with
Proposition~4.1. For the bound in~(ii), the characterization~\eqref{eq:proj-vi} of the scaled projection $y_\alpha$, with
comparison point $x$, gives $\norm{d}_{\Dm^{-1}}^2+\alpha\inner{F(x)}{d}\le0$. Substituting
$-\inner{F(x)}{d}\ge\tfrac1\alpha\norm{d}_{\Dm^{-1}}^2$ into
$g_\alpha(x)=-\inner{F(x)}{d}-\tfrac{1}{2\alpha}\norm{d}_{\Dm^{-1}}^2$ yields~\eqref{eq:gap-lb}.
\end{proof}

\begin{remark}[Stationarity measure]\label{rem:measure}
At a boundary solution $x^*$ one has $d=0$ but $\nabla g_\alpha(x^*)=F(x^*)\neq0$ in general, so the
natural stationarity measure is the gap value $g_\alpha$ (equivalently the residual $\norm{d}$ via
\eqref{eq:gap-lb}), not $\norm{\nabla g_\alpha}$. Our convergence statements are phrased accordingly.
\end{remark}

\begin{definition}\label{def:dgap}
For $0<\alpha<\beta$, the \emph{D-gap function} is $g_{\alpha\beta}=g_\beta-g_\alpha$.
\end{definition}

\begin{remark}\label{rem:dgap}
The D-gap is nonnegative and, for $F\in C^1$, continuously differentiable, providing an unconstrained
reformulation of \ref{eq:vi}~\cite{Peng1997,Yamashita1997}.
\end{remark}

\begin{proposition}[Mastroeni regularized gap; cf.~\cite{Mastroeni2003}]\label{prop:epgap}
For $\alpha>0$ and $\Dm\in\Dfam^+$, suppose $G(x,x)=0$ and $G(x,\cdot)$ is convex for $x\in K$. The scaled
Mastroeni regularized gap
\begin{equation}\label{eq:epgap}
  g_\alpha^{\mathrm{EP}}(x)=-\min_{y\in K}\Big\{G(x,y)+\tfrac{1}{2\alpha}\norm{y-x}_{\Dm^{-1}}^2\Big\}
\end{equation}
has the following properties:
\begin{enumerate}[label=(\roman*)]
  \item it is a gap function for \ref{eq:ep};
  \item if $G$ is differentiable with $\nabla_1 G,\nabla_2 G$ continuous on $K\times K$ ($\nabla_i$ the
  gradient in the $i$th argument), then $g_\alpha^{\mathrm{EP}}\in C^1$.
\end{enumerate}
\end{proposition}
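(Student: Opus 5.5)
The plan is to handle parts (i) and (ii) separately. For both, the key object is the inner minimization $\phi_x(y)=G(x,y)+\tfrac{1}{2\alpha}\norm{y-x}_{\Dm^{-1}}^2$. Since $G(x,\cdot)$ is convex and the quadratic term is strongly convex, $\phi_x$ is strongly convex on the closed convex set $K$. It therefore has a unique minimizer $z_\alpha(x)\in K$, so $g_\alpha^{\mathrm{EP}}$ is well defined and finite. Taking $y=x$ and using $G(x,x)=0$ gives $\min_y\phi_x\le 0$, hence $g_\alpha^{\mathrm{EP}}(x)\ge0$ on $K$.

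For (i) it remains to show that $g_\alpha^{\mathrm{EP}}(x)=0$ if and only if $x\in\Sol$.
\begin{itemize}[leftmargin=2em]
  \item If $x\in\Sol$, then $G(x,y)\ge0$ for all $y\in K$, so $\phi_x\ge0$ and the minimum is $0$.
  \item Conversely, suppose the minimum equals $0$. Since $\phi_x(x)=0$, uniqueness of the minimizer gives $z_\alpha(x)=x$.
  \item The optimality condition for the convex problem at $x$, namely $0\in\partial_2G(x,x)+N_K(x)$ (the quadratic has zero gradient at $y=x$), shows that $x$ minimizes the convex function $G(x,\cdot)$ over $K$. Hence $G(x,y)\ge G(x,x)=0$ for all $y\in K$, i.e.\ $x\in\Sol$.
  \item To avoid subdifferential calculus, I would instead use a direct argument. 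For $y\in K$ and $t\in(0,1]$, set $y_t=x+t(y-x)\in K$. Then $0\le\phi_x(y_t)\le tG(x,y)+\tfrac{t^2}{2\alpha}\norm{y-x}_{\Dm^{-1}}^2$ by convexity. Dividing by $t$ and letting $t\downarrow0$ gives $G(x,y)\ge0$.
\end{itemize}

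For (ii) I would follow Mastroeni's argument, which rests on a Danskin-type theorem for minimum functions with a unique minimizer.
\begin{itemize}[leftmargin=2em]
  \item First show that $z_\alpha(\cdot)$ is continuous. Strong convexity with modulus $\tfrac1\alpha\lambda_{\min}(\Dm^{-1})$, uniform in $x$, together with continuity of $G$ gives a standard stability estimate. Alternatively, a sequence argument works: minimizers stay bounded, so any limit point is a minimizer, and uniqueness then identifies it.
  \item Next apply Danskin's theorem. The function $(x,y)\mapsto\phi_x(y)$ is continuously differentiable in $x$, with $\nabla_x\phi_x(y)=\nabla_1G(x,y)-\tfrac1\alpha\Dm^{-1}(y-x)$, and the minimizer is unique. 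So $-g_\alpha^{\mathrm{EP}}$ is differentiable with gradient $\nabla_x\phi_x(z_\alpha(x))$. This yields
  \[
    \nabla g_\alpha^{\mathrm{EP}}(x)=-\nabla_1G(x,z_\alpha(x))+\tfrac1\alpha\Dm^{-1}\big(z_\alpha(x)-x\big).
  \]
  \item Continuity of $\nabla_1G$ and of $z_\alpha$ then gives $g_\alpha^{\mathrm{EP}}\in C^1$.
\end{itemize}

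The main obstacle is the Danskin step, because Danskin's theorem needs a compact feasible set while $K$ may be unbounded. I would localize first. Near a fixed $\bar x$, continuity of $z_\alpha$ keeps $z_\alpha(x)$ in a compact ball $B$. So locally $g_\alpha^{\mathrm{EP}}(x)=-\min_{y\in K\cap B}\phi_x(y)$, and Danskin's theorem applies on the compact set $K\cap B$. Since $\nabla_1 G$ is only defined on $K\times K$, differentiability is understood relative to $K$, or via an open neighborhood as in~\cite{Mastroeni2003}. The continuity of $z_\alpha$ on unbounded $K$ also uses a coercivity bound: $\phi_x(y)\ge G(x,\bar y)+\inner{s}{y-\bar y}+\tfrac{1}{2\alpha\mu}\norm{y-x}^2$ for a subgradient $s$, which keeps minimizers bounded locally uniformly.
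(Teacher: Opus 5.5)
Your proof is correct, but it is more self-contained than the paper's. The paper's proof is a single appeal to Theorem~2.1 of Mastroeni, after checking that the regularizer $H(x,y)=\tfrac1{2\alpha}\norm{y-x}_{\Dm^{-1}}^2$ is strongly convex in $y$ with $H(x,x)=0$ and $\nabla_2H(x,x)=0$.

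For (i), your segment argument is exactly the one the paper itself uses in Proposition~\ref{prop:fixed}(ii). It works equally well with $-\inf$ in place of $-\min$, so your uniqueness and subdifferential bullets are not needed. Existence of $z_\alpha(x)$ does need $G(x,\cdot)$ to be lower semicontinuous on $K$, which convexity alone does not give at boundary points of $K$. However, it only matters in (ii), where differentiability of $G$ supplies it.

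For (ii), you rebuild the Danskin-type argument behind Mastroeni's result. You add a localization to a compact ball and a locally uniform coercivity bound, so unbounded $K$ is covered. You also get the explicit formula $\nabla g_\alpha^{\mathrm{EP}}(x)=-\nabla_1G(x,z_\alpha(x))+\tfrac1\alpha\Dm^{-1}(z_\alpha(x)-x)$. For $G(x,y)=\inner{F(x)}{y-x}$ this reproduces the gradient in Proposition~\ref{prop:gap}(iii), a consistency check that the citation does not show.

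The citation buys brevity and inherits the domain and regularity conventions of the cited theorem. Your route makes those conventions explicit, in particular that differentiability at boundary points of $K$ must be read relative to $K$ or through an open extension of $G$. The cost is length.
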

\begin{proof}
Both properties follow from Theorem~2.1 of~\cite{Mastroeni2003}, applied to the auxiliary equilibrium
problem with regularizer $H(x,y)=\tfrac1{2\alpha}\norm{y-x}_{\Dm^{-1}}^2$ (strongly convex in $y$, with
$H(x,x)=0$ and $\nabla_2 H(x,x)=0$).
\end{proof}

\subsection{Error bounds}\label{ss:eb}

The R-linear rate comes from an error bound relating the gap to the distance to the solution set. We
record the bound and its sufficient conditions.

\begin{definition}[\cite{Pang1997,Facchinei2004}]\label{def:eb}
The gap $g_\alpha$ provides a (local) error bound on $N\subseteq K$ if there is $\tau>0$ with
\begin{equation}\label{eq:eb}
  g_\alpha(x)\ \ge\ \tau\,\dist(x,\Sol)^2 \qquad\forall\, x\in N .
\end{equation}
\end{definition}

\begin{proposition}[Sufficient conditions; cf.~\cite{Pang1997,Facchinei2004}]\label{prop:eb}
The error bound~\eqref{eq:eb} holds globally if $F$ is strongly monotone, and locally if $F$ is
strongly pseudomonotone and $L$-Lipschitz. For \ref{eq:ep} the analogue holds under strong monotonicity
of $G$ and a sufficiently large regularization parameter (Theorem~\ref{thm:ep}).
\end{proposition}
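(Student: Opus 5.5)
We prove the global claim for strongly monotone $F$ by combining the lower bound~\eqref{eq:gap-lb} with a contraction estimate that links the residual $\norm{d}$ to $\dist(x,\Sol)$. For the local claim we use the same argument with strong pseudomonotonicity in place of strong monotonicity, and we defer the \ref{eq:ep} clause to Theorem~\ref{thm:ep}, as the statement indicates.

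\textbf{Step 1 (unique solution).} Under strong monotonicity with modulus $m$, $\Sol=\{x^*\}$, so $\dist(x,\Sol)=\norm{x-x^*}$. By~\eqref{eq:gap-lb} and Remark~\ref{rem:scaling} (taking $\Dm\in\Dfam_\mu$),
\[
  g_\alpha(x)\ \ge\ \tfrac{1}{2\alpha\mu}\norm{d}^2 ,\qquad d=y_\alpha(x)-x .
\]
It therefore suffices to show that $\norm{d}\ge c\norm{x-x^*}$ for some $c>0$.

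\textbf{Step 2 (residual bound).} Apply~\eqref{eq:proj-vi} to $y_\alpha=\proj_{K,\Dm^{-1}}(x-\alpha\Dm F(x))$ with comparison point $x^*$. This gives
\[
  \inner{d+\alpha\Dm F(x)}{\Dm^{-1}(y_\alpha-x^*)}\le 0,
  \quad\text{i.e.}\quad
  \inner{\Dm^{-1}d}{y_\alpha-x^*}+\alpha\inner{F(x)}{y_\alpha-x^*}\le0 .
\]
Applying~\ref{eq:vi} at $x^*$ with $y=y_\alpha$ gives $\alpha\inner{F(x^*)}{y_\alpha-x^*}\ge0$. Subtracting, and writing $y_\alpha-x^*=(x-x^*)+d$, we obtain
\[
  \alpha\inner{F(x)-F(x^*)}{x-x^*}\ \le\ -\alpha\inner{F(x)-F(x^*)}{d}-\inner{\Dm^{-1}d}{x-x^*+d}.
\]
Strong monotonicity bounds the left-hand side below by $\alpha m\norm{x-x^*}^2$.

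\textbf{Step 3 (the obstacle).} The term $\inner{F(x)-F(x^*)}{d}$ must be absorbed. With $L$-Lipschitz continuity this is immediate:
\[
  \alpha m\norm{x-x^*}^2\le(\alpha L+\mu)\norm{d}\norm{x-x^*},
  \qquad\text{hence}\qquad
  \norm{d}\ge\tfrac{\alpha m}{\alpha L+\mu}\norm{x-x^*},
\]
where we dropped $-\inner{\Dm^{-1}d}{d}\le0$. This yields~\eqref{eq:eb} with $\tau=\frac{\alpha m^2}{2\mu(\alpha L+\mu)^2}$.

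If only strong monotonicity is available, without Lipschitz continuity, this step is the main difficulty. To handle it we would instead use the Fukushima representation directly. Set $y=x^*$ in~\eqref{eq:gap}:
\[
  g_\alpha(x)\ge\inner{F(x)}{x-x^*}-\tfrac{1}{2\alpha}\norm{x-x^*}^2_{\Dm^{-1}}
  \ge\inner{F(x)-F(x^*)}{x-x^*}+\inner{F(x^*)}{x-x^*}-\tfrac{\mu}{2\alpha}\norm{x-x^*}^2 .
\]
The middle term is nonnegative by~\ref{eq:vi}, so the bound reduces to $(m-\mu/(2\alpha))\norm{x-x^*}^2$. This gives~\eqref{eq:eb} globally whenever $\alpha>\mu/(2m)$. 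Otherwise one falls back on Lipschitz continuity, as is standard in~\cite{Pang1997,Facchinei2004}.

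\textbf{Step 4 (local case).} For strongly pseudomonotone, $L$-Lipschitz $F$ we repeat Step 2. Since $x^*$ solves \ref{eq:vi}, $\inner{F(x^*)}{x-x^*}\ge0$, and strong pseudomonotonicity then gives $\inner{F(x)}{x-x^*}\ge\gamma\norm{x-x^*}^2$. The delicate point is that we only know $\inner{F(x^*)}{y_\alpha-x^*}\ge0$ at $y_\alpha$, not at $x$. We would therefore apply the pseudomonotone implication at $y_\alpha$ and transfer back to $x$ using the Lipschitz bound $\norm{F(y_\alpha)-F(x)}\le L\norm{d}$. Near $\Sol$, where $\norm{d}$ is small relative to $\norm{x-x^*}$ or else the desired bound already holds, this gives $\norm{d}\ge c\norm{x-x^*}$ and hence~\eqref{eq:eb} on a neighbourhood $N$.
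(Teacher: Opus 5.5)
Your Steps~1--3 are correct when $F$ is strongly monotone \emph{and} $L$-Lipschitz. The projection inequality at $x^*$ combined with \ref{eq:vi} at $x^*$ gives $\norm{d}\ge\frac{\alpha m}{\alpha L+\mu}\norm{x-x^*}$, hence $\tau=\frac{\alpha m^2}{2\mu(\alpha L+\mu)^2}$ for \emph{every} $\alpha>0$. The paper proves nothing here beyond the citation. Its own explicit bound (Proposition~\ref{prop:merit-equiv}(ii)) goes through the contraction of $y_\alpha$ and so needs $\alpha<2m/(\mu^3L^2)$, a restriction your residual argument avoids.

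The first clause of the statement, however, assumes only strong monotonicity, and there your proof stops. The test point $y=x^*$ works only for $\alpha>\mu/(2m)$, and ``falling back on Lipschitz continuity'' adds a hypothesis that is not needed. The missing idea is to test the maximum in~\eqref{eq:gap} at a point of the segment $[x,x^*]\subseteq K$ rather than at $x^*$ itself. Take $y_t=x+t(x^*-x)$ with $t\in[0,1]$, and use $\inner{F(x)}{x-x^*}\ge\inner{F(x)-F(x^*)}{x-x^*}\ge m\norm{x-x^*}^2$ (because $x^*$ solves the VI). Then
\[
  g_\alpha(x)\ \ge\ t\inner{F(x)}{x-x^*}-\tfrac{t^2}{2\alpha}\norm{x-x^*}_{\Dm^{-1}}^2\ \ge\ \Big(tm-\tfrac{t^2\mu}{2\alpha}\Big)\norm{x-x^*}^2 .
\]
Choosing $t=\min\{1,\alpha m/\mu\}$ gives~\eqref{eq:eb} on all of $K$ with $\tau=\min\{\alpha m^2/(2\mu),\,m/2\}$, for every $\alpha>0$ and without Lipschitz continuity. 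Your $y=x^*$ computation is just the case $t=1$.

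Step~4 is not a proof. The ``delicate point'' you name is spurious: $x\in K$ as well, and you used $\inner{F(x^*)}{x-x^*}\ge0$ one sentence earlier. The closing case split (``where $\norm{d}$ is small relative to $\norm{x-x^*}$ or else the desired bound already holds'') assumes the conclusion in the only case that matters, and no estimate is ever written down. The step can be closed in two ways, and neither is local.

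\emph{First route.} The inequality $\inner{F(x)}{x-x^*}\ge\gamma\norm{x-x^*}^2$ you derived is all the segment argument above uses. It therefore gives~\eqref{eq:eb} on all of $K$ with $\tau=\min\{\alpha\gamma^2/(2\mu),\,\gamma/2\}$, again with no Lipschitz constant.

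\emph{Second route, along your residual sketch.} Since $\inner{F(x^*)}{y_\alpha-x^*}\ge0$, strong pseudomonotonicity applied to the pair $(y_\alpha,x^*)$ gives $\inner{F(y_\alpha)}{y_\alpha-x^*}\ge\gamma\norm{y_\alpha-x^*}^2$. Insert this and $\norm{F(x)-F(y_\alpha)}\le L\norm{d}$ into $\alpha\inner{F(x)}{y_\alpha-x^*}\le-\inner{\Dm^{-1}d}{y_\alpha-x^*}$, which is~\eqref{eq:proj-vi} with comparison point $x^*$. This yields
\[
  \alpha\gamma\norm{y_\alpha-x^*}^2\le(\alpha L+\mu)\norm{d}\,\norm{y_\alpha-x^*},\qquad\text{hence}\qquad
  \norm{x-x^*}\le\Big(1+\tfrac{\alpha L+\mu}{\alpha\gamma}\Big)\norm{d},
\]
and~\eqref{eq:gap-lb} finishes the bound, globally.

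Deferring the \ref{eq:ep} clause to Theorem~\ref{thm:ep} is exactly what the paper does.
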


\begin{definition}[cf.~\cite{Ortega2000}]\label{def:rlinear}
A sequence $\{z_k\}$ converges to $z^*$ \emph{R-linearly} with ratio $\theta\in(0,1)$ if there is
$C>0$ such that $\norm{z_k-z^*}\le C\,\theta^k$ for all $k$.
\end{definition}

\begin{remark}[On strong quasiconvexity]\label{rem:sq}
Strong quasiconvexity of $G(x,\cdot)$ does not, in general, make $g_\alpha^{\mathrm{EP}}$ strongly
quasiconvex. The quadratic-growth property of strongly quasiconvex functions pertains to the function
itself, not to a derived gap~\cite{Lara2022,Grad2025}. The rate in
Theorem~\ref{thm:rate} is therefore obtained through the error bound~\eqref{eq:eb} and not by
transferring strong quasiconvexity to the gap.
\end{remark}

% ============================================================================
\section{Problem formulations and scaled fixed-point characterizations}\label{sec:formulation}
% ============================================================================

The following unified characterizations are the basis of the algorithm. They specialize the
scaled fixed-point identity~\eqref{eq:gap-max} to each problem class. Let $\Dm\in\Dfam_\mu$ and
$\alpha>0$.

\begin{proposition}[Scaled fixed points]\label{prop:fixed}
Let $\Dm\in\Dfam^+$ and $\alpha>0$.
\begin{enumerate}[label=(\roman*)]
  \item $x^*$ solves \ref{eq:vi} if and only if
  $x^*=\proj_{K,\Dm^{-1}}(x^*-\alpha\Dm F(x^*))$, equivalently $g_\alpha(x^*)=0$.
  \item If $G(x^*,\cdot)$ is convex with $G(x^*,x^*)=0$, then $x^*$ solves \ref{eq:ep} if and only if
  \[
    x^*\in\argmin_{y\in K}\Big\{G(x^*,y)+\tfrac{1}{2\alpha}\norm{y-x^*}_{\Dm^{-1}}^2\Big\},
  \]
  equivalently $g_\alpha^{\mathrm{EP}}(x^*)=0$.
\end{enumerate}
\end{proposition}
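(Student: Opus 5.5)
For part~(i) I will work directly from the characterization~\eqref{eq:proj-vi} of the scaled projection, applied with metric $\Dm^{-1}$ to the point $z=x^*-\alpha\Dm F(x^*)$. Write $p=\proj_{K,\Dm^{-1}}(z)$. Then $p$ is characterized by
\[
\inner{p-z}{\Dm^{-1}(p-y)}\le0\qquad\forall\, y\in K .
\]

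If $p=x^*$, then $p-z=\alpha\Dm F(x^*)$. The inequality becomes $\alpha\inner{\Dm F(x^*)}{\Dm^{-1}(x^*-y)}\le0$. The symmetry of $\Dm$ lets the two matrices cancel, which gives exactly $\inner{F(x^*)}{y-x^*}\ge0$ for all $y\in K$.

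Conversely, suppose $x^*$ solves \ref{eq:vi}. The same computation, read backwards, shows that $x^*\in K$ satisfies the characterizing inequality for $p$. Uniqueness of the scaled projection (Definition~\ref{def:proj}) then forces $p=x^*$.

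The equivalence with $g_\alpha(x^*)=0$ comes in two directions.
\begin{itemize}[leftmargin=2em]
  \item By Proposition~\ref{prop:gap}(i), $g_\alpha$ is a gap function, so $g_\alpha(x^*)=0$ if and only if $x^*\in\Sol$.
  \item Alternatively, this can be seen directly. Inequality~\eqref{eq:gap-lb} shows that $g_\alpha(x^*)=0$ forces $d=0$. If $d=0$, the maximum in~\eqref{eq:gap} is attained at $y=x^*$ and equals $0$.
\end{itemize}

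For part~(ii) I will use the optimality condition for a convex minimization. The function
\[
\psi(y)=G(x^*,y)+\tfrac{1}{2\alpha}\norm{y-x^*}_{\Dm^{-1}}^2
\]
is strongly convex. Also, $\psi(x^*)=0$ because $G(x^*,x^*)=0$.

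If $x^*$ solves \ref{eq:ep}, then $\psi(y)\ge G(x^*,y)\ge0=\psi(x^*)$ for every $y\in K$, so $x^*$ is a minimizer.

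The converse is the main obstacle, because no differentiability of $G$ is assumed. I will handle it with a convex-combination argument rather than subgradients. Suppose $x^*$ minimizes $\psi$ on $K$, and fix $y\in K$ and $t\in(0,1]$. Set $y_t=x^*+t(y-x^*)$, which lies in $K$ by convexity. Minimality and convexity of $G(x^*,\cdot)$ give
\[
0\le\psi(y_t)\le tG(x^*,y)+(1-t)G(x^*,x^*)+\tfrac{t^2}{2\alpha}\norm{y-x^*}_{\Dm^{-1}}^2 .
\]
Since $G(x^*,x^*)=0$, dividing by $t$ and letting $t\downarrow0$ yields $G(x^*,y)\ge0$.

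Finally, $\min_{y\in K}\psi(y)\le\psi(x^*)=0$, so $g^{\mathrm{EP}}_\alpha(x^*)\ge0$. Hence $g^{\mathrm{EP}}_\alpha(x^*)=0$ holds exactly when the minimum value is $0$. Since $\psi(x^*)=0$, this happens exactly when $x^*$ is a minimizer, and by strong convexity the minimizer is unique. This matches Proposition~\ref{prop:epgap}(i), which can also be cited directly.
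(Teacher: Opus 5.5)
Your proposal is correct and follows the paper's proof essentially step for step. In part (i) you use the characterization \eqref{eq:proj-vi} at $z=x^*-\alpha\Dm F(x^*)$, with symmetry of $\Dm$ cancelling it against $\Dm^{-1}$; in part (ii) you use the same convex-combination argument with $y_t=x^*+t(y-x^*)$, dividing by $t$ and letting $t\downarrow0$. The differences are minor: you check the gap-function equivalences directly (via \eqref{eq:gap-lb} and $\psi(x^*)=0$) where the paper cites Propositions~\ref{prop:gap} and~\ref{prop:epgap}, and in the converse of (i) it is the sufficiency half of \eqref{eq:proj-vi}, not uniqueness alone, that identifies $x^*$ as the projection.
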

\begin{proof}
\begin{enumerate}[label=(\roman*)]
\item By \eqref{eq:proj-vi}, 
    \[
    x^*=\proj_{K,\Dm^{-1}}(z) \quad \text{iff}\quad  \inner{x^*-z}{\Dm^{-1}(x^*-w)}\le0 \quad \text{for all }w\in K.
    \]
With $z=x^*-\alpha\Dm F(x^*)$ one has $x^*-z=\alpha\Dm F(x^*)$, so the condition becomes
\[
\alpha\inner{F(x^*)}{x^*-w}\le0, \quad \text{i.e.} \quad \inner{F(x^*)}{w-x^*}\ge0\quad \text{for all } w\in K,
\]
which means that $x^*$ solves \ref{eq:vi} and therefore, by Proposition~\ref{prop:gap}, $g_\alpha(x^*)=0$.

\item The map $\psi(y)=G(x^*,y)+\tfrac1{2\alpha}\norm{y-x^*}_{\Dm^{-1}}^2$ is convex with $\psi(x^*)=0$, so
$x^*$ minimizes $\psi$ over $K$ if and only if $x^*$ solves \ref{eq:ep}. Indeed, if
$G(x^*,y)\ge G(x^*,x^*)=0$ for all $y\in K$, then $\psi(y)\ge0=\psi(x^*)$ for all $y\in K$. Conversely, if
$x^*$ minimizes $\psi$ over $K$, fix $y\in K$ and set $y_t=x^*+t(y-x^*)\in K$ for $t\in(0,1]$. Convexity of
$G(x^*,\cdot)$ together with $G(x^*,x^*)=0$ gives
\[
0\le\psi(y_t)-\psi(x^*)=G(x^*,y_t)+\tfrac{t^2}{2\alpha}\norm{y-x^*}_{\Dm^{-1}}^2
\le t\,G(x^*,y)+\tfrac{t^2}{2\alpha}\norm{y-x^*}_{\Dm^{-1}}^2,
\]
and dividing by $t$ and letting $t\downarrow0$ yields $G(x^*,y)\ge0$. Hence $x^*$ solves \ref{eq:ep};
equivalently $g_\alpha^{\mathrm{EP}}(x^*)=0$ by Proposition~\ref{prop:epgap}.
\end{enumerate}

\end{proof}

Each inner subproblem has a strictly convex (resp.\ concave) objective, so its minimizer (resp.\
maximizer)---the algorithm's step $y_k$---is the unique point of the associated $\argmin$ (resp.\
$\argmax$). Here $d_k=y_k-x_k$ is the gap-defining direction, which lets a single algorithmic template
serve all three classes.

% ============================================================================
\section{The scaled gap-function algorithm for VI/MVI/EP}\label{sec:algorithm}
% ============================================================================

We write $g_\alpha$ for the active gap function---$g_\alpha$ itself for VI, $g_\alpha^{\mathrm{MVI}}$ for MVI,
$g_\alpha^{\mathrm{EP}}$ for EP.
Algorithm~\ref{alg:sgfm} states the scaled gap-function method (SGFM). Two \emph{certificates} $v_k$ drive the line search---a
certificate being a vector whose pairing $\inner{v_k}{d_k}$ verifies that $d_k$ decreases the merit:
\begin{itemize}[leftmargin=2em]
  \item[\textbf{(V1)}] \emph{Smooth-gap descent.} For $F\in C^1$, take $v_k=\nabla g_\alpha(x_k)$.
  By Proposition~\ref{prop:gap}, $d_k$ is a descent direction for $g_\alpha$ when the symmetric part
  of $\nabla F(x_k)$ is positive definite. The test~\eqref{eq:sgm-ls-gap} is then the base
  rule~\eqref{eq:sgm-ls} with $f:=g_\alpha$.
  \item[\textbf{(V2)}] \emph{Residual / separating-hyperplane.} For merely pseudomonotone or
  nonsmooth data, replace ``decrease the merit'' by a Solodov--Svaiter separating-hyperplane test on
  the scaled natural residual
  \[
    r(x)=\norm{x-\proj_{K,\Dm^{-1}}(x-\alpha\Dm F(x))}_{\Dm^{-1}},
  \]
  measured against the moving average $\mathcal T_k$~\cite{Solodov1999}. Only (V1) is analysed in this
  paper. For (V2), see Remark~\ref{rem:flagship}.
\end{itemize}

\begin{algorithm}[H]
\caption{Scaled Gap-Function Method (SGFM) for VI/MVI/EP}\label{alg:sgfm}
\KwInitialization{$x_0\in K$; $0<\delta_1<\sigma<1$, $\delta_2>0$, $0<\lambda_{\max}\le1$, $\lambda\in(0,1]$ (MVI fixed step), $\mu\ge1$; ranges
$0<\alpha_{\min}\le\alpha_k\le\alpha_{\max}$, $0\le\eta_k\le\eta_{\max}<1$;
set $\mathcal T_0=g_\alpha(x_0)$.}
\For{$k=0,1,2,\dots$}{
  choose a scaling matrix $\Dm_k\in\Dfam_\mu$\;
  compute the scaled (generalized) projection $y_k$ and $d_k=y_k-x_k$:\;
  \quad VI: $y_k=\proj_{K,\Dm_k^{-1}}(x_k-\alpha_k\Dm_k F(x_k))$\;
  \quad MVI: $y_k\in\argmin_{y\in K}\{\inner{F(x_k)}{y-x_k}+\varphi(y)+\tfrac{1}{2\alpha_k}\norm{y-x_k}_{\Dm_k^{-1}}^2\}$\;
  \quad EP: $y_k\in\argmin_{y\in K}\{G(x_k,y)+\tfrac{1}{2\alpha_k}\norm{y-x_k}_{\Dm_k^{-1}}^2\}$\;
  \If{$d_k=0$}{\textbf{stop}: $x_k$ solves the problem\;}
  \eIf{$g_\alpha$ is smooth \textnormal{(VI, EP)}}{
    form the certificate $v_k=\nabla g_\alpha(x_k)$ and set $s_k=-\inner{v_k}{d_k}/\norm{d_k}^2$\;
    find the smallest integer $j_k\ge0$ such that $\lambda_k=\min\{s_k,\lambda_{\max}\}\,\sigma^{j_k}$ satisfies\;
    \quad $g_\alpha(x_k+\lambda_k d_k)\le \mathcal T_k+\delta_1\lambda_k\inner{v_k}{d_k}-\delta_2\lambda_k^2\norm{d_k}^2$\;
    set $x_{k+1}=x_k+\lambda_k d_k$ and $\mathcal T_{k+1}=\eta_{k+1}\mathcal T_k+(1-\eta_{k+1})\,g_\alpha(x_{k+1})$\;
  }{
    take a fixed relaxation $\lambda\in(0,1]$ (nonsmooth MVI gap) and set $x_{k+1}=x_k+\lambda d_k$\;
  }
}
\end{algorithm}

In an implementation the exact test $d_k=0$ is replaced by $\norm{d_k}\le\varepsilon$ for a tolerance
$\varepsilon>0$, with an iteration cap as a safeguard (Section~\ref{ss:setup-stop}).

The line-search test in Algorithm~\ref{alg:sgfm} reads
\begin{equation}\label{eq:sgm-ls-gap}
  g_\alpha(x_k+\lambda_k d_k)\le \mathcal T_k+\delta_1\lambda_k\inner{v_k}{d_k}-\delta_2\lambda_k^2\norm{d_k}^2 .
\end{equation}

\begin{as}\label{as:standing}
\begin{enumerate}[label=(\roman*),leftmargin=2em]
  \item $K\subseteq\R^n$ is nonempty, closed, and convex, and $\Sol\neq\emptyset$.
  \item The relevant gap function ($g_\alpha$ for VI/MVI, $g_\alpha^{\mathrm{EP}}$ for EP) is coercive, so
  its sublevel sets are bounded. Under strong monotonicity this is automatic, since the error bound makes
  the gap coercive.
  \item $F$ is continuous (resp.\ $G$ is continuous).
  \item The scaling matrices satisfy $\Dm_k\in\Dfam_\mu$ for all $k$.
\end{enumerate}
\end{as}

\begin{as}[Direction/sufficient-decrease condition]\label{as:dir}
There is $\kappa>0$ such that the certificate $v_k$ and direction $d_k$ satisfy
$\inner{v_k}{d_k}\le -\kappa\,\norm{d_k}^2$ for all $k$.
\end{as}

% ============================================================================
\section{Convergence analysis}\label{sec:convergence}
% ============================================================================

Throughout this section we analyse the smooth-gap certificate (V1) for the variational inequality
\ref{eq:vi} and fix the scaling matrix and regularization parameter: $\Dm_k\equiv\Dm\in\Dfam_\mu$ and
$\alpha>0$ constant, so that the gap function $g_\alpha$ of \eqref{eq:gap} is a fixed $C^1$ function. We add
the following standing hypotheses to Assumption~\ref{as:standing}.

\begin{as}[V1 setting]\label{as:v1}
$F\in C^{1,1}$ is strongly monotone with modulus $m>0$ and $L$-Lipschitz. We take
$v_k=\nabla g_\alpha(x_k)$.
\end{as}

The following example exhibits an operator satisfying Assumptions~\ref{as:standing}--\ref{as:v1}.

\begin{example}[A genuine VI satisfying Assumptions~\ref{as:standing}--\ref{as:v1}]\label{ex:vi}
Let $n=2$, $K=[-1,1]^2$, and $F(x)=Mx+q$ with
\[
  M=mI+J,\qquad m=1,\qquad J=\begin{pmatrix}0&2\\-2&0\end{pmatrix},\qquad q=\begin{pmatrix}-1\\\tfrac12\end{pmatrix},
\]
so that $J^\top=-J$, and take the scaling $\Dm=I\in\Dfam_\mu$ with $\mu=1$.
\begin{itemize}[leftmargin=2em]
  \item \emph{Strong monotonicity, modulus $m=1$:} $\inner{F(x)-F(y)}{x-y}=\inner{M(x-y)}{x-y}=m\norm{x-y}^2$,
  since $\inner{Jd}{d}=0$ for skew-symmetric $J$.
  \item \emph{Lipschitz, $L=\norm{M}_2=\sqrt5$,} because $M^\top M=5I$.
  \item \emph{A genuine VI:} $F$ is affine with constant Jacobian $\nabla F=M$, whose symmetric part
  $\tfrac12(M+M^\top)=mI\succ0$ makes the scaled-projection step $d=y_\alpha(x)-x$ a descent direction for
  $g_\alpha$ (Proposition~\ref{prop:gap}). Here $y_\alpha(x)=\proj_{K,\Dm^{-1}}\!\big(x-\alpha\Dm F(x)\big)$,
  which for $\Dm=I$ is the componentwise clipping of $x-\alpha F(x)$ onto $[-1,1]^2$. The skew part $J\neq0$
  makes $\nabla F$ asymmetric, so $F$ is not a gradient and the problem is not an optimization.
\end{itemize}
Since $K$ is compact convex and $F$ is continuous and strongly monotone, $\mathrm{VI}(F,K)$ has a unique
solution, and Assumptions~\ref{as:standing} and~\ref{as:v1} hold.
\end{example}

By Definition~\ref{def:mono} the symmetric part of $\nabla F$ satisfies
$\inner{d}{\nabla F(x)\,d}\ge m\norm{d}^2$, so the descent identity \eqref{eq:gap-descent} gives
\begin{equation}\label{eq:dir-m}
  \inner{\nabla g_\alpha(x_k)}{d_k}\ \le\ -\,m\,\norm{d_k}^2 ,
\end{equation}
i.e. Assumption~\ref{as:dir} holds with $\kappa=m$, and $s_k=-\inner{v_k}{d_k}/\norm{d_k}^2\ge\kappa$.
Problem~\ref{eq:vi} has a unique solution, $x^*$, because $F$ is strongly monotone. By
Proposition~\ref{prop:eb} the error bound \eqref{eq:eb} holds globally:
\[
  g_\alpha(x)\ \ge\ \tau\,\norm{x-x^*}^2 \qquad\forall\, x\in K .
\]
Here $\tau>0$ exists for every $\alpha>0$. The explicit value $\tau=(1-q)^2/(2\alpha\mu)$
(Proposition~\ref{prop:merit-equiv}) requires the additional rate restriction $\alpha<2m/(\mu^3L^2)$. This
makes $g_\alpha$ coercive, so the sublevel set $\mathcal L_0=\{x\in K: g_\alpha(x)\le \mathcal T_0\}$ is
bounded.

The analysis below uses the following set and estimates. Set
\[
  \mathcal C_0=\operatorname{conv}\big(\mathcal L_0\cup y_\alpha(\mathcal L_0)\big)\subseteq K ,
\]
where $y_\alpha$ is the scaled projection step \eqref{eq:gap-max}. It is a bounded convex set containing
the segment $[x,y_\alpha(x)]$ for every $x\in\mathcal L_0$. Because $F\in C^{1,1}$ and $\mathcal C_0$ is
bounded, $\nabla g_\alpha$ is Lipschitz on $\mathcal C_0$ with a constant $L_g$.

By Lemma~\ref{lem:descent}(ii) the iterates satisfy $x_k\in\mathcal L_0$, so $\{x_k\}$ is bounded. Since
$F$ is $L$-Lipschitz, this gives $M_F:=\sup_k\norm{F(x_k)}<\infty$. Finally, at the maximizer $y_\alpha(x)$
the gap equals
\[
  g_\alpha(x)=-\inner{F(x)}{d}-\tfrac1{2\alpha}\norm{d}_{\Dm^{-1}}^2 ,\qquad d=y_\alpha(x)-x ,
\]
so
\begin{equation}\label{eq:gap-ub}
  0\ \le\ g_\alpha(x)\ \le\ -\inner{F(x)}{d}\ \le\ \norm{F(x)}\,\norm{d} .
\end{equation}

\begin{lemma}[Line search and sufficient decrease]\label{lem:descent}
Under Assumptions~\ref{as:standing} and~\ref{as:v1}:
\begin{enumerate}[label=(\roman*),leftmargin=2em]
  \item the line search terminates and
  $\lambda_k\ge\lambda_{\min}:=\min\{\kappa,\lambda_{\max},\sigma\bar\lambda\}>0$, where $\kappa$ is the
  direction constant of Assumption~\ref{as:dir} (here $\kappa=m$ by~\eqref{eq:dir-m}) and
  \[
  \bar\lambda=(1-\delta_1)\kappa/(L_g/2+\delta_2).
  \]
  \item $g_\alpha(x_k)\le\mathcal T_k$ for all $k$, and $\{\mathcal T_k\}$ is nonincreasing.
  \item $\mathcal T_k-\mathcal T_{k+1}\ge (1-\eta_{\max})\,\rho\,\norm{d_k}^2$ with
  $\rho=\delta_1\kappa\lambda_{\min}>0$.
\end{enumerate}
\end{lemma}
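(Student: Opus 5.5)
The three parts are proved by a single induction on $k$, with hypothesis $g_\alpha(x_j)\le\mathcal T_j$ and $x_j\in\mathcal L_0$ for $j\le k$. This breaks the apparent circularity: $L_g$ is only a Lipschitz constant on $\mathcal C_0$, so we need $x_k\in\mathcal L_0$ before using $L_g$. The base case is $\mathcal T_0=g_\alpha(x_0)$, which gives $x_0\in\mathcal L_0$.

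\textbf{Step 1: termination and $\lambda_k\ge\lambda_{\min}$.} Fix $k$ with $x_k\in\mathcal L_0$ and $d_k\neq0$. For $\lambda\in(0,1]$ the point $x_k+\lambda d_k$ lies on $[x_k,y_\alpha(x_k)]\subseteq\mathcal C_0$, and $\nabla g_\alpha$ is $L_g$-Lipschitz there. The descent lemma then gives
\[
g_\alpha(x_k+\lambda d_k)\le g_\alpha(x_k)+\lambda\inner{v_k}{d_k}+\tfrac{L_g}{2}\lambda^2\norm{d_k}^2 .
\]
We bound $g_\alpha(x_k)\le\mathcal T_k$ by the induction hypothesis. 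It then suffices to have
\[
(1-\delta_1)\lambda\inner{v_k}{d_k}+(L_g/2+\delta_2)\lambda^2\norm{d_k}^2\le0 .
\]
By \eqref{eq:dir-m}, $\inner{v_k}{d_k}\le-\kappa\norm{d_k}^2$, so this holds for every $\lambda\le\bar\lambda$. The trial steps $\min\{s_k,\lambda_{\max}\}\sigma^j$ decrease geometrically and $\lambda_{\max}\le1$, so the test \eqref{eq:sgm-ls-gap} is eventually met and the search terminates. For the lower bound, either the first trial is accepted, or the accepted step equals $\sigma$ times a rejected trial. In the first case $\lambda_k=\min\{s_k,\lambda_{\max}\}\ge\min\{\kappa,\lambda_{\max}\}$, using $s_k\ge\kappa$. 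In the second case the rejected trial exceeds $\bar\lambda$, so $\lambda_k>\sigma\bar\lambda$. Hence $\lambda_k\ge\lambda_{\min}$ in both cases.

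\textbf{Step 2: part (ii).} The accepted step gives
\[
g_\alpha(x_{k+1})\le\mathcal T_k+\delta_1\lambda_k\inner{v_k}{d_k}-\delta_2\lambda_k^2\norm{d_k}^2\le\mathcal T_k .
\]
Since $\mathcal T_{k+1}$ is a convex combination of $\mathcal T_k$ and $g_\alpha(x_{k+1})$, we get $g_\alpha(x_{k+1})\le\mathcal T_{k+1}\le\mathcal T_k$. In particular $g_\alpha(x_{k+1})\le\mathcal T_0$, so $x_{k+1}\in\mathcal L_0$, which closes the induction. Membership $x_{k+1}\in K$ is automatic because $x_{k+1}$ is a convex combination of $x_k$ and $y_k$, both in $K$.

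\textbf{Step 3: part (iii).} Write
\[
\mathcal T_k-\mathcal T_{k+1}=(1-\eta_{k+1})\big(\mathcal T_k-g_\alpha(x_{k+1})\big).
\]
Step 2 gives $\mathcal T_k-g_\alpha(x_{k+1})\ge-\delta_1\lambda_k\inner{v_k}{d_k}\ge\delta_1\kappa\lambda_k\norm{d_k}^2$, after dropping the nonnegative $\delta_2$ term. Combining $\lambda_k\ge\lambda_{\min}$ with $1-\eta_{k+1}\ge1-\eta_{\max}$ yields the bound with $\rho=\delta_1\kappa\lambda_{\min}$.

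The main obstacle is Step 1: the descent-lemma bound must be applied only inside $\mathcal C_0$. This is why the induction carries $x_k\in\mathcal L_0$ together with $\lambda\le1$, so that every trial point stays on the segment $[x_k,y_\alpha(x_k)]$.
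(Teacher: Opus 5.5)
Your proposal is correct and follows essentially the same argument as the paper: an induction on $g_\alpha(x_k)\le\mathcal T_k$ (keeping $x_k\in\mathcal L_0$ so the descent lemma with constant $L_g$ on $\mathcal C_0$ applies to every trial point), the backtracking bound in which any rejected trial must exceed $\bar\lambda$, the convex-combination update of $\mathcal T_{k+1}$, and dropping the $\delta_2$ term for (iii). Putting $x_k\in\mathcal L_0$ explicitly into the induction hypothesis, as you do, is slightly cleaner than the paper's implicit use of $\mathcal T_k\le\mathcal T_0$.
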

\begin{proof}
We argue by induction that $g_\alpha(x_k)\le\mathcal T_k$ for all $k$. This holds at $k=0$ because
$\mathcal T_0=g_\alpha(x_0)$. Assume it at index $k$, so $x_k\in\mathcal L_0$.
\begin{enumerate}[label=(\roman*),leftmargin=2em]
  \item For $0<\lambda\le\lambda_{\max}\le1$ the trial point $x_k+\lambda d_k$ lies on
  $[x_k,y_\alpha(x_k)]\subseteq\mathcal C_0$, so the descent lemma for the $L_g$-Lipschitz gradient on
  $\mathcal C_0$ gives
  \[
    g_\alpha(x_k+\lambda d_k)\ \le\ g_\alpha(x_k)+\lambda\inner{\nabla g_\alpha(x_k)}{d_k}
    +\tfrac{L_g}{2}\lambda^2\norm{d_k}^2 .
  \]
  With $g_\alpha(x_k)\le\mathcal T_k$ and $v_k=\nabla g_\alpha(x_k)$, the test \eqref{eq:sgm-ls-gap} holds
  whenever
  \[
    \big(\tfrac{L_g}{2}+\delta_2\big)\lambda\norm{d_k}^2\ \le\ (1-\delta_1)\big(-\inner{\nabla g_\alpha(x_k)}{d_k}\big) ,
  \]
  which by \eqref{eq:dir-m} is true for all $0<\lambda\le\bar\lambda$. Hence the backtracking terminates.
  Its first trial is $\min\{s_k,\lambda_{\max}\}\ge\min\{\kappa,\lambda_{\max}\}$ and the trials shrink by
  $\sigma\in(0,1)$. A trial fails only when it exceeds $\bar\lambda$, so the accepted step satisfies
  $\lambda_k\ge\min\{\kappa,\lambda_{\max},\sigma\bar\lambda\}=\lambda_{\min}$.

  \item The accepted step obeys \eqref{eq:sgm-ls-gap}, hence
  \[
    g_\alpha(x_{k+1})\ \le\ \mathcal T_k+\delta_1\lambda_k\inner{\nabla g_\alpha(x_k)}{d_k}
    -\delta_2\lambda_k^2\norm{d_k}^2\ \le\ \mathcal T_k .
  \]
  Therefore $\mathcal T_{k+1}=\eta_{k+1}\mathcal T_k+(1-\eta_{k+1})g_\alpha(x_{k+1})\le\mathcal T_k$, and
  \[
  \mathcal T_{k+1}-g_\alpha(x_{k+1})=\eta_{k+1}\big(\mathcal T_k-g_\alpha(x_{k+1})\big)\ge0.
  \]
  This closes the
  induction and shows $\{\mathcal T_k\}$ is nonincreasing.

  \item Using \eqref{eq:dir-m}, $\lambda_k\ge\lambda_{\min}$, and $\eta_{k+1}\le\eta_{\max}$,
  \[
  \begin{array}{lcl}
    \mathcal T_k-\mathcal T_{k+1}&=&(1-\eta_{k+1})\big(\mathcal T_k-g_\alpha(x_{k+1})\big) \\
    \ &\ge& \ (1-\eta_{\max})\big(-\delta_1\lambda_k\inner{\nabla g_\alpha(x_k)}{d_k}\big) \\
    \ &\ge&\ (1-\eta_{\max})\,\delta_1\kappa\lambda_{\min}\norm{d_k}^2=(1-\eta_{\max})\,\rho\,\norm{d_k}^2 .
  \end{array}
  \]
\end{enumerate}
\end{proof}

\begin{remark}[Reusability of Lemma~\ref{lem:descent}]\label{rem:descent-generic}
The proof of Lemma~\ref{lem:descent} uses only that $g_\alpha$ is nonnegative and $C^1$ with
$L_g$-Lipschitz gradient on $\mathcal C_0$, and the direction bound $\inner{v_k}{d_k}\le-\kappa\norm{d_k}^2$
of Assumption~\ref{as:dir}. The lemma therefore holds with $g_\alpha$ replaced by any function with these
properties. Theorem~\ref{thm:ep} applies it to the equilibrium gap $g_\alpha^{\mathrm{EP}}$.
\end{remark}

\begin{theorem}[Global convergence, V1]\label{thm:global}
Under Assumptions~\ref{as:standing} and~\ref{as:v1}, the series $\sum_{k}\norm{d_k}^2$ is convergent and
the iterates $x_k$ converge to the unique solution $x^*$ of \ref{eq:vi}, with $g_\alpha(x_k)\to0$.
\end{theorem}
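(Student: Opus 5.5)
The argument is a telescoping sum driven by Lemma~\ref{lem:descent}, followed by an identification of the limit through the gap bound \eqref{eq:gap-ub} and the error bound \eqref{eq:eb}. If the algorithm stops finitely with $d_k=0$, then Proposition~\ref{prop:fixed}(i) shows $x_k=x^*$, and there is nothing more to prove. So assume the iteration runs forever.

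\emph{Step 1: summability of $\norm{d_k}^2$.} By Lemma~\ref{lem:descent}(ii), $g_\alpha(x_k)\le\mathcal T_k$ and $\{\mathcal T_k\}$ is nonincreasing. Since $g_\alpha\ge0$ on $K$ (Proposition~\ref{prop:gap}(i)), we get $\mathcal T_k\ge0$, so $\mathcal T_k$ decreases to a limit $\mathcal T_\infty\ge0$. Summing Lemma~\ref{lem:descent}(iii) over $k=0,\dots,N$ gives
\[
(1-\eta_{\max})\rho\sum_{k=0}^N\norm{d_k}^2\le\mathcal T_0-\mathcal T_{N+1}\le\mathcal T_0 .
\]
Because $(1-\eta_{\max})\rho>0$, the series $\sum_k\norm{d_k}^2$ converges, and in particular $\norm{d_k}\to0$.

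\emph{Step 2: $g_\alpha(x_k)\to0$.} By Lemma~\ref{lem:descent}(ii), $x_k\in\mathcal L_0$, and $\mathcal L_0$ is bounded. Together with the Lipschitz continuity of $F$, this gives $M_F=\sup_k\norm{F(x_k)}<\infty$. Then \eqref{eq:gap-ub} yields
\[
0\le g_\alpha(x_k)\le M_F\norm{d_k}\to0 .
\]
This is the only step requiring care. The lower bound \eqref{eq:gap-lb} runs the wrong way for this purpose, so one must use the upper bound \eqref{eq:gap-ub} together with the boundedness of $F$ along the iterates. That boundedness in turn rests on $x_k$ staying in the bounded sublevel set $\mathcal L_0$, which is exactly where the non-monotone reference $\mathcal T_k$ matters: we only know $g_\alpha(x_k)\le\mathcal T_k\le\mathcal T_0$, not $g_\alpha(x_{k+1})\le g_\alpha(x_k)$.

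\emph{Step 3: convergence of the iterates.} Strong monotonicity gives uniqueness of $x^*$, and Proposition~\ref{prop:eb} gives the global error bound $g_\alpha(x)\ge\tau\norm{x-x^*}^2$ on $K$. Hence
\[
\norm{x_k-x^*}^2\le g_\alpha(x_k)/\tau\to0 .
\]
An alternative route avoids the error bound. Every cluster point $\bar x$ of the bounded sequence $\{x_k\}$ satisfies $\bar x=y_\alpha(\bar x)$, by continuity of $y_\alpha$ (Remark~\ref{rem:proj} and continuity of $F$) and $\norm{d_k}\to0$. So $\bar x$ solves \ref{eq:vi} by Proposition~\ref{prop:fixed}(i), hence $\bar x=x^*$ by uniqueness, and the whole sequence converges to $x^*$.
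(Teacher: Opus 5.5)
Your proposal is correct and follows the paper's proof: telescope Lemma~\ref{lem:descent}(iii) against $\mathcal T_0$ to get $\sum_k\norm{d_k}^2<\infty$, bound $g_\alpha(x_k)\le M_F\norm{d_k}\to0$ via \eqref{eq:gap-ub} using boundedness of the iterates in $\mathcal L_0$, and conclude $x_k\to x^*$ from the global error bound. Your finite-termination remark and the cluster-point alternative in Step~3 are valid but not needed. That alternative, which uses continuity of $y_\alpha$ and uniqueness of $x^*$, is the argument the paper itself uses for equilibrium problems in Theorem~\ref{thm:ep}(i).
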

\begin{proof}
By Lemma~\ref{lem:descent}(iii), summed over $k=0,\dots,N$ and telescoped,
\[
  (1-\eta_{\max})\,\rho\sum_{k=0}^{N}\norm{d_k}^2\ \le\ \mathcal T_0-\mathcal T_{N+1}\ \le\ \mathcal T_0 ,
\]
because $\mathcal T_{N+1}\ge g_\alpha(x_{N+1})\ge0$. Letting $N\to\infty$ gives $\sum_{k}\norm{d_k}^2<\infty$,
hence $\norm{d_k}\to0$. By \eqref{eq:gap-ub}, 
\[
g_\alpha(x_k)\le\norm{F(x_k)}\norm{d_k}\le M_F\norm{d_k}\to0.
\]
Finally, the error bound $g_\alpha(x_k)\ge\tau\norm{x_k-x^*}^2$ gives
$\norm{x_k-x^*}^2\le g_\alpha(x_k)/\tau\to0$, so $x_k\to x^*$.
\end{proof}

\begin{corollary}[Best-iterate sublinear rate]\label{cor:sublinear}
Under Assumptions~\ref{as:standing} and~\ref{as:v1}, for every $N$,
\[
  \min_{0\le k\le N}\norm{d_k}^2\ \le\ \frac{\mathcal T_0}{(1-\eta_{\max})\,\rho\,(N+1)} ,
\]
so $\displaystyle \min_{0\le k\le N}\norm{d_k}=O(1/\sqrt{N})$ and, by \eqref{eq:gap-ub},
$\displaystyle\min_{0\le k\le N}g_\alpha(x_k)=O(1/\sqrt{N})$. The bound is informative only when the direction
constant is positive (here $\kappa=m>0$, so $\rho=\delta_1\kappa\lambda_{\min}>0$). By
Remark~\ref{rem:descent-generic} it holds for any merit satisfying the direction condition, so a merely
monotone operator, for which the direction condition holds only with $\kappa=0$, is not covered.
\end{corollary}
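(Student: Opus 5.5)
The bound comes from the telescoped sufficient-decrease inequality already used for Theorem~\ref{thm:global}; no new estimates are needed. First, by Lemma~\ref{lem:descent}(iii), for every $k$ we have $\mathcal T_k-\mathcal T_{k+1}\ge(1-\eta_{\max})\rho\norm{d_k}^2$ with $\rho=\delta_1\kappa\lambda_{\min}>0$. Summing over $k=0,\dots,N$ telescopes the left side to $\mathcal T_0-\mathcal T_{N+1}$. By Lemma~\ref{lem:descent}(ii), $\mathcal T_{N+1}\ge g_\alpha(x_{N+1})\ge0$, since $g_\alpha$ is a gap function (Proposition~\ref{prop:gap}(i)). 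This yields
\[
  (1-\eta_{\max})\,\rho\sum_{k=0}^{N}\norm{d_k}^2\ \le\ \mathcal T_0 .
\]
The minimum of $N+1$ nonnegative numbers is at most their average, so $(N+1)\min_{0\le k\le N}\norm{d_k}^2\le\sum_{k=0}^N\norm{d_k}^2$. Dividing gives the displayed bound, and taking square roots gives $\min_{0\le k\le N}\norm{d_k}=O(1/\sqrt N)$ with constant $\sqrt{\mathcal T_0/((1-\eta_{\max})\rho)}$.

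For the gap, I would use \eqref{eq:gap-ub}: $g_\alpha(x_k)\le\norm{F(x_k)}\norm{d_k}\le M_F\norm{d_k}$. Here $M_F<\infty$ because the iterates stay in the bounded set $\mathcal L_0$ (Lemma~\ref{lem:descent}(ii) and coercivity from the error bound) and $F$ is Lipschitz. Let $k^*$ be an index attaining $\min_{0\le k\le N}\norm{d_k}$. Then $\min_{0\le k\le N}g_\alpha(x_k)\le g_\alpha(x_{k^*})\le M_F\min_{0\le k\le N}\norm{d_k}=O(1/\sqrt N)$.

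The remaining assertions need only a short check. Informativeness requires $\rho>0$, which holds because $\kappa=m>0$ by \eqref{eq:dir-m} and $\lambda_{\min}>0$ by Lemma~\ref{lem:descent}(i). Remark~\ref{rem:descent-generic} shows that only the smoothness properties and the direction condition were used, so the argument transfers to any merit satisfying them. When $\kappa=0$, both $\lambda_{\min}$ and $\rho$ degenerate to $0$ and the bound is vacuous. The only point needing care is that the reference values $\mathcal T_{N+1}$ are bounded below by $0$, which follows from Lemma~\ref{lem:descent}(ii).
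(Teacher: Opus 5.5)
Your proposal is correct and follows the paper's proof: telescope Lemma~\ref{lem:descent}(iii) using $\mathcal T_{N+1}\ge g_\alpha(x_{N+1})\ge0$, bound the minimum by the average, and pass to the gap through $g_\alpha(x_k)\le M_F\norm{d_k}$ from \eqref{eq:gap-ub}. Choosing an index $k^*$ that attains the minimal residual makes the gap step slightly more explicit than the paper's one-line version, but the argument is the same.
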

\begin{proof}
The proof of Theorem~\ref{thm:global} gives $\displaystyle (1-\eta_{\max})\,\rho\sum_{k=0}^{N}\norm{d_k}^2\le\mathcal T_0,$ and $\displaystyle\min_{0\le k\le N}\norm{d_k}^2\le\tfrac{1}{N+1}\sum_{k=0}^{N}\norm{d_k}^2$ 
yields the bound
\[
  \min_{0\le k\le N}\norm{d_k}^2\ \le\ \frac{\mathcal T_0}{(1-\eta_{\max})\,\rho\,(N+1)}.
\]
The gap estimate follows from $g_\alpha(x_k)\le M_F\norm{d_k}$ in \eqref{eq:gap-ub}.
\end{proof}

\begin{theorem}[R-linear convergence, V1]\label{thm:rate}
Under Assumptions~\ref{as:standing} and~\ref{as:v1}, suppose moreover that
$\alpha\in\big(0,\,2m/(\mu^3L^2)\big)$, and set
\[
  q=\sqrt{1-\tfrac{2\alpha m}{\mu}+\alpha^2\mu^2L^2}\in[0,1),\qquad \theta=1-\lambda_{\min}(1-q)\in(0,1).
\]
Then the iterates $x_k$ converge to $x^*$ R-linearly with ratio $\theta$, that is,
\begin{equation}\label{eq:rate}
  \norm{x_k-x^*}\ \le\ \mu\,\theta^{k}\,\norm{x_0-x^*}\qquad\text{for all }k,
\end{equation}
and the gap converges R-linearly with the same ratio, $g_\alpha(x_k)\le C_g\,\theta^{k}$ for some $C_g>0$.
\end{theorem}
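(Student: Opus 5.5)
The plan is to prove a one-step contraction of the scaled projection map toward $x^*$, and then transfer it to the relaxed iterate $x_{k+1}=x_k+\lambda_k d_k=(1-\lambda_k)x_k+\lambda_k y_\alpha(x_k)$. The transfer uses $\lambda_k\in[\lambda_{\min},1]$ from Lemma~\ref{lem:descent}(i).

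First, by Proposition~\ref{prop:fixed}(i), $x^*=\proj_{K,\Dm^{-1}}(x^*-\alpha\Dm F(x^*))$. I will work in the $\Dm^{-1}$-norm, where the scaled projection is nonexpansive: the characterization~\eqref{eq:proj-vi} with metric $\Dm^{-1}$ gives firm nonexpansiveness in $\norm{\cdot}_{\Dm^{-1}}$. This yields
\[
\norm{y_\alpha(x)-x^*}_{\Dm^{-1}}\le\norm{(x-x^*)-\alpha\Dm(F(x)-F(x^*))}_{\Dm^{-1}} .
\]
Expanding the square, with $e=x-x^*$ and $w=F(x)-F(x^*)$, gives
\[
\norm{e}_{\Dm^{-1}}^2-2\alpha\inner{e}{w}+\alpha^2\inner{w}{\Dm w}.
\]
I then bound the three terms separately. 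The cross term satisfies $\inner{e}{w}\ge m\norm{e}^2\ge (m/\mu)\norm{e}_{\Dm^{-1}}^2$ by strong monotonicity and Remark~\ref{rem:scaling}. The last term satisfies $\inner{w}{\Dm w}\le \mu L^2\norm{e}^2\le\mu^2L^2\norm{e}_{\Dm^{-1}}^2$. Combined, these give $\norm{y_\alpha(x)-x^*}_{\Dm^{-1}}\le q\norm{x-x^*}_{\Dm^{-1}}$ with $q$ exactly as stated.

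The hypothesis $\alpha<2m/(\mu^3L^2)$ makes $q<1$. It implies $\alpha\mu^2L^2<2m/\mu$, so that $-2\alpha m/\mu+\alpha^2\mu^2L^2<0$. Nonnegativity of the radicand is automatic, since it bounds a squared norm ratio from above; this holds whenever $e\neq0$, and the case $e=0$ is trivial. I expect this step to be the main obstacle. The exponents of $\mu$ must be tracked carefully so that they reproduce the stated $q$. If my bookkeeping gives a different power, I would switch to estimating in the Euclidean norm with the $\mu$-Lipschitz bound on the projection from Remark~\ref{rem:proj}, and then match constants.

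Next comes convexity of the norm:
\[
\norm{x_{k+1}-x^*}_{\Dm^{-1}}\le(1-\lambda_k)\norm{x_k-x^*}_{\Dm^{-1}}+\lambda_k q\norm{x_k-x^*}_{\Dm^{-1}}=(1-\lambda_k(1-q))\norm{x_k-x^*}_{\Dm^{-1}} .
\]
This is at most $\theta\norm{x_k-x^*}_{\Dm^{-1}}$, because $\lambda_k\ge\lambda_{\min}$. Also $\theta\in(0,1)$, since $0<\lambda_{\min}\le\lambda_{\max}\le1$ and $q<1$ (and $\theta>0$ unless $q=0$ and $\lambda_{\min}=1$, a degenerate case that can be handled separately). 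Iterating gives $\norm{x_k-x^*}_{\Dm^{-1}}\le\theta^k\norm{x_0-x^*}_{\Dm^{-1}}$. Converting norms by Remark~\ref{rem:scaling}, using $\norm{z}\le\sqrt\mu\norm{z}_{\Dm^{-1}}$ and $\norm{z}_{\Dm^{-1}}\le\sqrt\mu\norm{z}$, gives the factor $\mu$ in~\eqref{eq:rate}.

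For the gap I use~\eqref{eq:gap-ub}, $g_\alpha(x_k)\le M_F\norm{d_k}$, together with
\[
\norm{d_k}\le\norm{y_\alpha(x_k)-x^*}+\norm{x_k-x^*}.
\]
The contraction above bounds the first term by $\mu q\norm{x_k-x^*}$ after norm conversion, so $\norm{d_k}\le \mu(1+q)\norm{x_k-x^*}$. Hence $g_\alpha(x_k)\le M_F\mu^2(1+q)\norm{x_0-x^*}\,\theta^k=:C_g\theta^k$, with $M_F<\infty$ by the boundedness noted before Lemma~\ref{lem:descent}.
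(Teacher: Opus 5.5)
Your proposal is correct and follows the paper's proof essentially step for step: the same nonexpansiveness-plus-expansion estimate in $\norm{\cdot}_{\Dm^{-1}}$ with the bounds $\inner{e}{w}\ge\tfrac{m}{\mu}\norm{e}_{\Dm^{-1}}^2$ and $\inner{w}{\Dm w}\le\mu^2L^2\norm{e}_{\Dm^{-1}}^2$ (so your $\mu$-bookkeeping reproduces $q$ and the fallback is not needed), the convexity transfer to the relaxed step, and the two $\sqrt\mu$ norm conversions giving the factor $\mu$. The only deviation is in the gap estimate, where you bound $\norm{d_k}$ through the contraction itself, $\norm{d_k}\le(1+\mu q)\norm{x_k-x^*}\le\mu(1+q)\norm{x_k-x^*}$, whereas the paper uses the Euclidean Lipschitz bound $\big(1+\mu(1+\alpha\mu L)\big)\norm{x_k-x^*}$ on $y_\alpha$; both give a valid $C_g$.
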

\begin{proof}
Set $e_k=x_k-x^*$ and write $\norm{\cdot}_*=\norm{\cdot}_{\Dm^{-1}}$. The step map
$T(x)=\proj_{K,\Dm^{-1}}(x-\alpha\Dm F(x))=y_\alpha(x)$ has $x^*$ as a fixed point by
Proposition~\ref{prop:fixed}, and $y_k=T(x_k)$. By Remark~\ref{rem:scaling}, $\norm{x}_{\Dm}$ and
$\norm{x}_*$ lie in $[\tfrac{1}{\sqrt\mu}\norm{x},\,\sqrt\mu\,\norm{x}]$ for every $x$.

We first show that $T$ contracts in $\norm{\cdot}_*$. As $\proj_{K,\Dm^{-1}}$ is nonexpansive in
$\norm{\cdot}_*$,
\begin{equation}\label{eq:rate-exp}
  \norm{y_k-x^*}_*^2\le\norm{e_k-\alpha\Dm\big(F(x_k)-F(x^*)\big)}_*^2
  =\norm{e_k}_*^2-2\alpha\inner{e_k}{F(x_k)-F(x^*)}+\alpha^2\norm{F(x_k)-F(x^*)}_{\Dm}^2 .
\end{equation}
Strong monotonicity and $L$-Lipschitz continuity, together with the norm bounds above, give
\begin{equation}\label{eq:rate-bd}
  \inner{e_k}{F(x_k)-F(x^*)}\ge\tfrac{m}{\mu}\norm{e_k}_*^2 ,\qquad
  \norm{F(x_k)-F(x^*)}_{\Dm}^2\le\mu^2L^2\norm{e_k}_*^2 .
\end{equation}
Substituting \eqref{eq:rate-bd} into \eqref{eq:rate-exp} yields
\begin{equation}\label{eq:rate-q}
  \norm{y_k-x^*}_*\ \le\ q\,\norm{e_k}_* ,\qquad q^2=1-\tfrac{2\alpha m}{\mu}+\alpha^2\mu^2L^2 ,
\end{equation}
and $q<1$ because $\alpha<2m/(\mu^3L^2)$. Since $x_{k+1}=(1-\lambda_k)x_k+\lambda_k y_k$ with
$\lambda_k\in[\lambda_{\min},1]$, the relaxed step inherits the contraction \eqref{eq:rate-q}:
\begin{equation}\label{eq:rate-relax}
  \norm{e_{k+1}}_*\le(1-\lambda_k)\norm{e_k}_*+\lambda_k\norm{y_k-x^*}_*
  \le\big(1-\lambda_k(1-q)\big)\norm{e_k}_*\le\theta\,\norm{e_k}_* .
\end{equation}
Iterating \eqref{eq:rate-relax} gives $\norm{e_k}_*\le\theta^k\norm{e_0}_*$. Converting back to the
Euclidean norm,
\[
  \norm{x_k-x^*}\le\sqrt\mu\,\norm{e_k}_*\le\sqrt\mu\,\theta^k\norm{e_0}_*\le\mu\,\theta^k\norm{x_0-x^*} ,
\]
which is \eqref{eq:rate}. Finally, $\norm{d_k}\le\big(1+\mu(1+\alpha\mu L)\big)\norm{e_k}$ by Lipschitz
continuity of $y_\alpha$ together with $y_\alpha(x^*)=x^*$, so \eqref{eq:gap-ub} gives $g_\alpha(x_k)\le M_F\norm{d_k}\le C_g\,\theta^k$.
\end{proof}

\begin{remark}\label{rem:firstorder}
The R-linear rate does not come from the sufficient decrease of Lemma~\ref{lem:descent} alone. Its
per-step estimate $\mathcal T_k-\mathcal T_{k+1}\ge(1-\eta_{\max})\rho\,\norm{d_k}^2$ is quadratic in the
residual, so it controls only $\sum_k\norm{d_k}^2$ and gives the sublinear rate of
Corollary~\ref{cor:sublinear}. An R-linear rate from this estimate alone would need the per-step decrease
to be proportional to $g_\alpha(x_k)$, that is, a bound $g_\alpha(x)\le C\norm{d}^2$. No such bound holds:
by \eqref{eq:gap-lb} and \eqref{eq:gap-ub},
\[
  \tfrac{1}{2\alpha}\norm{d}_{\Dm^{-1}}^2\ \le\ g_\alpha(x)\ \le\ \norm{F(x)}\,\norm{d} ,
\]
and the linear upper bound is sharp at a solution $x^*$ on the boundary of $K$, where $d=0$ but
$\nabla g_\alpha(x^*)=F(x^*)\ne0$ (Remark~\ref{rem:measure}).

The R-linear rate of Theorem~\ref{thm:rate} comes instead from the contraction \eqref{eq:rate-q}, which
strong monotonicity provides. With the error bound \eqref{eq:eb} and the bounds \eqref{eq:gap-lb} and
\eqref{eq:gap-ub}, it makes the distance $\norm{x_k-x^*}$, the residual $\norm{d_k}$, and the gap
$g_\alpha(x_k)$ converge to $0$ R-linearly with the same ratio $\theta$ (Corollary~\ref{cor:allmeasures}).
For the same reason, convergence is measured by $g_\alpha(x_k)$ or $\norm{d_k}$ rather than by
$\norm{\nabla g_\alpha(x_k)}$, which need not tend to $0$.
\end{remark}

\begin{remark}[Rate on Example~\ref{ex:vi}]\label{rem:exrate}
The rate window of Theorem~\ref{thm:rate} is nonempty. With $\Dm=I$ ($\mu=1$), $m=1$, and $L=\sqrt5$, it is
$2m/(\mu^3L^2)=0.4$, and $\alpha=0.2$ gives
\[
  q=\sqrt{1-\tfrac{2\alpha m}{\mu}+\alpha^2\mu^2L^2}=\sqrt{0.8}\approx0.894\in[0,1) ,
\]
so the iterates converge R-linearly.
\end{remark}

The next example shows that $F$ need not be affine.

\begin{example}[A nonlinear VI satisfying Assumptions~\ref{as:standing}--\ref{as:v1}]\label{ex:viquad}
Let $n=2$, $K=[0,1]^2$, and $F(x)=\nabla\psi(x)+Jx+q$ with
\[
  \psi(x)=\tfrac12\norm{x}^2+\tfrac16\big(x_1^3+x_2^3\big),\qquad
  J=\begin{pmatrix}0&2\\-2&0\end{pmatrix},\qquad
  q=\begin{pmatrix}-\tfrac32\\\tfrac12\end{pmatrix},
\]
so that $F(x)=\big(x_1+\tfrac12x_1^2+2x_2-\tfrac32,\ -2x_1+x_2+\tfrac12x_2^2+\tfrac12\big)^\top$ is quadratic,
and take the scaling $\Dm=I\in\Dfam_\mu$ with $\mu=1$. Its Jacobian is
\[
  \nabla F(x)=\begin{pmatrix}1+x_1&2\\-2&1+x_2\end{pmatrix}.
\]
\begin{itemize}[leftmargin=2em]
  \item \emph{Strong monotonicity, modulus $m=1$:} the symmetric part of $\nabla F(x)$ is
  $\operatorname{diag}(1+x_1,1+x_2)\succeq I$ on $K$, so integrating $\nabla F$ along the segment
  $[y,x]\subseteq K$ gives $\inner{F(x)-F(y)}{x-y}\ge\norm{x-y}^2$, the skew part contributing nothing to
  the quadratic form.
  \item \emph{Lipschitz, $L=3$:} $L=\max_{x\in K}\norm{\nabla F(x)}_2=3$, attained at $x=(1,0)$, where
  $\nabla F^\top\nabla F$ has largest eigenvalue $9$.
  \item \emph{A genuine nonlinear VI:} $F$ is quadratic, and $\nabla F$ carries the nonzero constant skew
  part $J$, so $F$ is not a gradient. The problem is neither affine nor an optimization.
\end{itemize}
Since $K$ is compact convex and $F$ is continuous, strongly monotone, and $C^{1,1}$ on $K$,
$\mathrm{VI}(F,K)$ has a unique solution, and Assumptions~\ref{as:standing} and~\ref{as:v1} hold. The rate
window of Theorem~\ref{thm:rate} is nonempty, $2m/(\mu^3L^2)=\tfrac29$, and $\alpha=\tfrac19$ gives
$q=\sqrt{1-\tfrac{2\alpha m}{\mu}+\alpha^2\mu^2L^2}=\sqrt{\tfrac89}=\tfrac{2\sqrt2}{3}\approx0.943\in[0,1)$, so
the iterates converge R-linearly.
\end{example}

\subsection{Error bound and merit equivalence}\label{ss:meriteq}

Theorem~\ref{thm:rate} measures convergence through the regularized gap $g_\alpha$. We now relate this gap to the other quantities that measure progress toward the solution, chiefly the
distance and the residual. The next proposition answers this, and the same relations are reused for
equilibrium problems (Section~\ref{ss:ep}).

\begin{proposition}[Error bound and merit equivalence, V1]\label{prop:merit-equiv}
Under Assumptions~\ref{as:standing} and~\ref{as:v1}, suppose moreover that
$\alpha\in\big(0,2m/(\mu^3L^2)\big)$, and set $q=\sqrt{1-\tfrac{2\alpha m}{\mu}+\alpha^2\mu^2L^2}\in[0,1)$.
Write $\norm{\cdot}_*=\norm{\cdot}_{\Dm^{-1}}$. For $x\in K$ and $t>0$, the scaled residual at step size $t$ is
\[
  d_t(x)=y_t(x)-x ,\qquad y_t(x)=\proj_{K,\Dm^{-1}}\!\big(x-t\Dm F(x)\big).
\]
In particular, $d_\alpha(x)=y_\alpha(x)-x$ is the residual $d$ of \eqref{eq:gap-lb}, with
$d_k=d_\alpha(x_k)$. Then the following hold for all $x\in K$.
\begin{enumerate}[label=(\roman*),leftmargin=2em]
  \item The scaled residual satisfies,
  \begin{equation}\label{eq:res-eb}
    (1-q)\,\norm{x-x^*}_*\ \le\ \norm{d_\alpha(x)}_* ,\qquad
    \norm{d_t(x)}\ \le\ \big(1+\mu(1+t\mu L)\big)\,\norm{x-x^*}\quad(t>0).
  \end{equation}
  \item The regularized gap is an explicit error bound for the distance to $\Sol=\{x^*\}$,
  \begin{equation}\label{eq:eb-explicit}
    g_\alpha(x)\ \ge\ \tau\,\dist(x,\Sol)^2 ,\qquad \tau=\frac{(1-q)^2}{2\alpha\mu} .
  \end{equation}
  \item For $0<\alpha<\beta$ the D-gap satisfies,
  \begin{equation}\label{eq:dgap-twosided}
    \tfrac12\big(\tfrac1\alpha-\tfrac1\beta\big)\norm{d_\alpha(x)}_*^2
    \ \le\ g_{\alpha\beta}(x)\ \le\
    \tfrac12\big(\tfrac1\alpha-\tfrac1\beta\big)\norm{d_\beta(x)}_*^2 .
  \end{equation}
  \item If moreover $\beta<2m/(\mu^3L^2)$, then $\dist(x,\Sol)^2$, $\norm{d_\alpha(x)}^2$,
  $\norm{d_\beta(x)}^2$, and $g_{\alpha\beta}(x)$ are mutually equivalent on $K$, i.e.\ there are constants
  $0<\underline{c}\le\overline{c}$ such that
  \[
    \underline{c}\,\dist(x,\Sol)^2\ \le\ \norm{d_\alpha(x)}^2,\ \ \norm{d_\beta(x)}^2,\ \ g_{\alpha\beta}(x)
    \ \le\ \overline{c}\,\dist(x,\Sol)^2 \qquad(x\in K).
  \]
\end{enumerate}
\end{proposition}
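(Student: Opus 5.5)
The plan is to reuse the contraction estimate from the proof of Theorem~\ref{thm:rate} together with the two gap bounds \eqref{eq:gap-lb} and \eqref{eq:gap-ub}.

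\emph{Part (i).} For the first inequality, set $T=y_\alpha$, whose fixed point is $x^*$ (Proposition~\ref{prop:fixed}). By \eqref{eq:rate-q}, applied to an arbitrary $x\in K$ instead of $x_k$, we have $\norm{T(x)-x^*}_*\le q\norm{x-x^*}_*$. The triangle inequality then gives
\[
\norm{x-x^*}_*\le\norm{x-T(x)}_*+\norm{T(x)-x^*}_*\le\norm{d_\alpha(x)}_*+q\norm{x-x^*}_*,
\]
which rearranges to the claim. For the second inequality, write $d_t(x)=(y_t(x)-y_t(x^*))-(x-x^*)$ and use $y_t(x^*)=x^*$; note that the fixed-point property holds for every $t>0$ by Proposition~\ref{prop:fixed}(i). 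In Euclidean norm, Remark~\ref{rem:proj} gives Lipschitz constant $\mu$ for $\proj_{K,\Dm^{-1}}$, and $x\mapsto x-t\Dm F(x)$ has Lipschitz constant $1+t\mu L$. Hence $\norm{d_t(x)}\le(\mu(1+t\mu L)+1)\norm{x-x^*}$.

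\emph{Part (ii).} Combine \eqref{eq:gap-lb} with (i):
\[
g_\alpha(x)\ge\tfrac1{2\alpha}\norm{d_\alpha}_*^2\ge\tfrac{(1-q)^2}{2\alpha}\norm{x-x^*}_*^2\ge\tfrac{(1-q)^2}{2\alpha\mu}\norm{x-x^*}^2,
\]
where the last step uses Remark~\ref{rem:scaling} for $\Dm^{-1}\in\Dfam_\mu$.

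\emph{Part (iii).} This is the main obstacle, because it needs a variational argument instead of the estimates already available. Define $\phi_\gamma(x,y)=\inner{F(x)}{x-y}-\tfrac1{2\gamma}\norm{y-x}_*^2$.

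For the lower bound, evaluate the maximization defining $g_\beta$ at the suboptimal point $y=y_\alpha(x)$:
\[
g_\beta(x)\ge\phi_\beta(x,y_\alpha)=g_\alpha(x)+\big(\tfrac1{2\alpha}-\tfrac1{2\beta}\big)\norm{d_\alpha}_*^2 .
\]

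For the upper bound, symmetrically $g_\alpha(x)\ge\phi_\alpha(x,y_\beta)=g_\beta(x)-\tfrac12(\tfrac1\alpha-\tfrac1\beta)\norm{d_\beta}_*^2$.

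I will check that neither comparison needs strong concavity beyond optimality of the maximizers. Both use only the definitions of $g_\alpha$ and $g_\beta$ as maxima, so they hold for any $x\in K$ without the rate restriction.

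\emph{Part (iv).} Apply (i) with $\alpha$ and with $\beta$ (the latter allowed since $\beta<2m/(\mu^3L^2)$, so $q_\beta<1$). This gives lower bounds $\norm{d_\gamma(x)}^2\ge\mu^{-2}(1-q_\gamma)^2\norm{x-x^*}^2$, after converting norms with factor $\mu$ each way. The second inequality in (i) gives quadratic upper bounds on both residuals.

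For the D-gap, (iii) sandwiches $g_{\alpha\beta}$ between $\tfrac12(\tfrac1\alpha-\tfrac1\beta)\norm{d_\alpha}_*^2$ and $\tfrac12(\tfrac1\alpha-\tfrac1\beta)\norm{d_\beta}_*^2$. Norm equivalence (Remark~\ref{rem:scaling}) then transfers the residual bounds to $g_{\alpha\beta}$.

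Finally, take $\underline c$ as the minimum and $\overline c$ as the maximum of the resulting constants, and use $\dist(x,\Sol)=\norm{x-x^*}$.
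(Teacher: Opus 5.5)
Your proposal is correct and follows the paper's proof essentially step for step: the contraction $\norm{y_\alpha(x)-x^*}_*\le q\norm{x-x^*}_*$ with the triangle inequality for (i), the chain through \eqref{eq:gap-lb} for (ii), evaluating each regularized maximization at the other's maximizer for (iii), and the step-size-$t$ contraction plus norm equivalence for (iv). The only cosmetic difference is that the paper feeds the $\norm{\cdot}_*$ residual bound directly into the D-gap lower bound, which gives the slightly sharper constant $\tfrac{(1-q_\alpha)^2}{2\mu}\big(\tfrac1\alpha-\tfrac1\beta\big)$; this does not affect the equivalence.
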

\begin{proof}
\begin{enumerate}[label=(\roman*),leftmargin=2em]
  \item The proof of Theorem~\ref{thm:rate} shows that 
  \[
  \norm{y_\alpha(x)-x^*}_*\le q\norm{x-x^*}_*, \quad \text{where}\quad y_\alpha(x^*)=x^*.
  \]
  Therefore
  \[
    \norm{x-x^*}_*\ \le\ \norm{d_\alpha(x)}_*+\norm{y_\alpha(x)-x^*}_*
    \ \le\ \norm{d_\alpha(x)}_*+q\,\norm{x-x^*}_* ,
  \]
  which rearranges to the first inequality of \eqref{eq:res-eb}. For the second, fix $t>0$. Since $x^*$
  solves \ref{eq:vi}, it is a fixed point of the scaled projection step at every step size, so
  $y_t(x^*)=x^*$ (Proposition~\ref{prop:fixed}). The projection $\proj_{K,\Dm^{-1}}$ is Lipschitz with
  constant $\mu$ in the Euclidean norm (Remark~\ref{rem:proj}, since $\Dm^{-1}\in\Dfam_\mu$). Writing
  $u=x-t\Dm F(x)$ and $u^*=x^*-t\Dm F(x^*)$, so that $y_t(x)=\proj_{K,\Dm^{-1}}(u)$ and
  $y_t(x^*)=\proj_{K,\Dm^{-1}}(u^*)=x^*$,
  \[
    \norm{y_t(x)-x^*}\ \le\ \mu\,\norm{u-u^*}
    \ =\ \mu\,\norm{(x-x^*)-t\Dm\big(F(x)-F(x^*)\big)} .
  \]
  By the triangle inequality, $\norm{\Dm}\le\mu$ (Remark~\ref{rem:scaling}), and the $L$-Lipschitz
  continuity of $F$,
  \[
    \norm{u-u^*}\ \le\ \norm{x-x^*}+t\,\norm{\Dm}\,\norm{F(x)-F(x^*)}\ \le\ (1+t\mu L)\,\norm{x-x^*} ,
  \]
  so that $\norm{y_t(x)-x^*}\le\mu(1+t\mu L)\norm{x-x^*}$. Finally
  $d_t(x)=\big(y_t(x)-x^*\big)-\big(x-x^*\big)$, and the triangle inequality gives
  \[
    \norm{d_t(x)}\ \le\ \norm{y_t(x)-x^*}+\norm{x-x^*}\ \le\ \big(1+\mu(1+t\mu L)\big)\norm{x-x^*} ,
  \]
  which is the second inequality of \eqref{eq:res-eb}.

  \item By the lower bound \eqref{eq:gap-lb}, the first inequality of \eqref{eq:res-eb}, and
  $\norm{\cdot}_*^2\ge\tfrac1\mu\norm{\cdot}^2$,
  \[
    g_\alpha(x)\ \ge\ \tfrac1{2\alpha}\norm{d_\alpha(x)}_*^2
    \ \ge\ \tfrac{(1-q)^2}{2\alpha}\norm{x-x^*}_*^2
    \ \ge\ \tfrac{(1-q)^2}{2\alpha\mu}\norm{x-x^*}^2 ,
  \]
  which is \eqref{eq:eb-explicit}.

  \item Write $\Phi_t(y)=\inner{F(x)}{x-y}-\tfrac1{2t}\norm{y-x}_{\Dm^{-1}}^2$ for the $t$-objective of
  \eqref{eq:gap}, so that $g_t(x)=\max_{y\in K}\Phi_t(y)=\Phi_t(y_t(x))$. Since $y_t(x)$ is the maximizer and
  $\norm{y_t(x)-x}_{\Dm^{-1}}=\norm{d_t(x)}_*$,
  \[
    g_\alpha(x)=\inner{F(x)}{x-y_\alpha(x)}-\tfrac1{2\alpha}\norm{d_\alpha(x)}_*^2 ,\qquad
    g_\beta(x)=\inner{F(x)}{x-y_\beta(x)}-\tfrac1{2\beta}\norm{d_\beta(x)}_*^2 .
  \]
  For the lower bound, $y_\alpha(x)\in K$ is feasible for the $\beta$-maximization, so
  \[
    g_\beta(x)\ \ge\ \Phi_\beta(y_\alpha(x))\ =\ \inner{F(x)}{x-y_\alpha(x)}-\tfrac1{2\beta}\norm{d_\alpha(x)}_*^2 .
  \]
  Subtracting the identity for $g_\alpha(x)$, the inner-product terms cancel and
  \[
    g_{\alpha\beta}(x)=g_\beta(x)-g_\alpha(x)\ \ge\ \Big(\tfrac1{2\alpha}-\tfrac1{2\beta}\Big)\norm{d_\alpha(x)}_*^2
    =\tfrac12\Big(\tfrac1\alpha-\tfrac1\beta\Big)\norm{d_\alpha(x)}_*^2 ,
  \]
  the lower bound of \eqref{eq:dgap-twosided}. Symmetrically, $y_\beta(x)\in K$ is feasible for the
  $\alpha$-maximization, so
  $g_\alpha(x)\ge\Phi_\alpha(y_\beta(x))=\inner{F(x)}{x-y_\beta(x)}-\tfrac1{2\alpha}\norm{d_\beta(x)}_*^2$.
  Subtracting this from the identity for $g_\beta(x)$ gives
  \[
    g_{\alpha\beta}(x)\ \le\ \Big(\tfrac1{2\alpha}-\tfrac1{2\beta}\Big)\norm{d_\beta(x)}_*^2
    =\tfrac12\Big(\tfrac1\alpha-\tfrac1\beta\Big)\norm{d_\beta(x)}_*^2 ,
  \]
  the upper bound.

  \item Recall $\dist(x,\Sol)=\norm{x-x^*}$. By the contraction estimate in the proof of
  Theorem~\ref{thm:rate}, applied at step size $t$,
  \[
    \norm{y_t(x)-x^*}_*\ \le\ q_t\,\norm{x-x^*}_* ,\qquad q_t=\sqrt{1-\tfrac{2tm}{\mu}+t^2\mu^2L^2} ,
  \]
  and $q_t<1$ precisely when $t<2m/(\mu^3L^2)$, which holds for both $t=\alpha$ and $t=\beta$. As in (i), the
  triangle inequality gives the residual lower bound $(1-q_t)\norm{x-x^*}_*\le\norm{d_t(x)}_*$. Combining it
  with the upper bound of (i) and the norm equivalence (Remark~\ref{rem:scaling}) gives, for
  $t\in\{\alpha,\beta\}$,
  \begin{equation}\label{eq:res-bounds}
    \frac{(1-q_t)^2}{\mu^2}\,\dist(x,\Sol)^2\ \le\ \norm{d_t(x)}^2
    \ \le\ \big(1+\mu(1+t\mu L)\big)^2\,\dist(x,\Sol)^2 ,
  \end{equation}
  so $\norm{d_\alpha(x)}^2$ and $\norm{d_\beta(x)}^2$ are each equivalent to $\dist(x,\Sol)^2$. Combining
  \eqref{eq:dgap-twosided} with the residual bounds yields
  \begin{equation}\label{eq:dgap-bounds}
    \frac{(1-q_\alpha)^2}{2\mu}\Big(\tfrac1\alpha-\tfrac1\beta\Big)\dist(x,\Sol)^2
    \ \le\ g_{\alpha\beta}(x)\ \le\
    \frac{\mu}{2}\Big(\tfrac1\alpha-\tfrac1\beta\Big)\big(1+\mu(1+\beta\mu L)\big)^2\dist(x,\Sol)^2 ,
  \end{equation}
  so $g_{\alpha\beta}(x)$ is equivalent to $\dist(x,\Sol)^2$ too. Taking $\underline{c}$ to be the least and
  $\overline{c}$ the greatest of the positive constants in \eqref{eq:res-bounds} and \eqref{eq:dgap-bounds} gives
  \[
  \underline{c}\,\dist(x,\Sol)^2\le\norm{d_\alpha(x)}^2,\norm{d_\beta(x)}^2,g_{\alpha\beta}(x)\le\overline{c}\,\dist(x,\Sol)^2.
  \]
\end{enumerate}
\end{proof}

\begin{corollary}[R-linear in every natural measure]\label{cor:allmeasures}
Under the hypotheses of Theorem~\ref{thm:rate}, the distance $\dist(x_k,\Sol)$, the residual
$\norm{d_\alpha(x_k)}$, and the regularized gap $g_\alpha(x_k)$ converge to $0$ R-linearly with ratio
$\theta$, while the squared residual and the D-gap $g_{\alpha\beta}(x_k)$ (for $\alpha<\beta<2m/(\mu^3L^2)$)
converge with ratio $\theta^2$.
\end{corollary}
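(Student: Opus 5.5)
The plan is to transfer the iterate rate of Theorem~\ref{thm:rate} to every other measure through the two-sided bounds of Proposition~\ref{prop:merit-equiv}. Since $\Sol=\{x^*\}$, we have $\dist(x_k,\Sol)=\norm{x_k-x^*}$, so \eqref{eq:rate} gives
\[
\dist(x_k,\Sol)\le C_0\theta^k,\qquad C_0=\mu\norm{x_0-x^*}.
\]
This is the distance claim, and the remaining measures will all be bounded by multiples of $\dist(x_k,\Sol)$ or its square.

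For the residual, apply the upper bound in \eqref{eq:res-eb} with $t=\alpha$. It gives
\[
\norm{d_\alpha(x_k)}\le\big(1+\mu(1+\alpha\mu L)\big)C_0\theta^k,
\]
which is ratio $\theta$. Squaring the same bound shows that $\norm{d_\alpha(x_k)}^2$ converges with ratio $\theta^2$.

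For the gap, use \eqref{eq:gap-ub}: $g_\alpha(x_k)\le\norm{F(x_k)}\,\norm{d_k}\le M_F\norm{d_k}$. Here $M_F<\infty$ because the iterates stay in the bounded set $\mathcal L_0$ (Lemma~\ref{lem:descent}(ii)) and $F$ is Lipschitz. This yields ratio $\theta$ for $g_\alpha(x_k)$, as already noted at the end of the proof of Theorem~\ref{thm:rate}. I would not try to obtain $\theta^2$ for $g_\alpha$, since Remark~\ref{rem:firstorder} explains why the linear bound in $\norm{d}$ is sharp at boundary solutions.

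For the D-gap with $\alpha<\beta<2m/(\mu^3L^2)$, invoke Proposition~\ref{prop:merit-equiv}(iv), specifically the upper bound in \eqref{eq:dgap-bounds}:
\[
g_{\alpha\beta}(x_k)\le\overline c\,\dist(x_k,\Sol)^2\le\overline c\,C_0^2\,\theta^{2k}.
\]
Equivalently, one can use the right-hand inequality of \eqref{eq:dgap-twosided} together with \eqref{eq:res-eb} at $t=\beta$. One point needs checking here. The theorem's $\theta$ is defined with the $q$ of the parameter $\alpha$ used by the algorithm, while (iv) only requires $\beta$ to lie in the window; it does not enter the ratio. So the ratio stays $\theta^2$, and only the constant depends on $\beta$.

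The only real subtlety is the constant convention in Definition~\ref{def:rlinear}. Each bound above has the form $C\theta^k$ or $C\theta^{2k}$ with an explicit $C$ depending on $\mu,\alpha,\beta,L,M_F$ and $\norm{x_0-x^*}$. If $x_0=x^*$, every quantity vanishes identically and any positive $C$ works.
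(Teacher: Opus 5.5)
Your proposal is correct and follows the paper's proof essentially step for step. The distance rate comes from \eqref{eq:rate}, the residual from the upper bound in \eqref{eq:res-eb} at $t=\alpha$, the gap from \eqref{eq:gap-ub} with the constant $M_F$, and the D-gap from the upper bound in \eqref{eq:dgap-twosided} combined with the residual bound at $t=\beta$ (which you also package as the constant $\overline{c}$ of Proposition~\ref{prop:merit-equiv}(iv)); note that this upper bound does not actually need $\beta<2m/(\mu^3L^2)$, since the second inequality of \eqref{eq:res-eb} holds for every $t>0$, and the window matters only for the lower bounds in part (iv).
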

\begin{proof}
Theorem~\ref{thm:rate} gives $\dist(x_k,\Sol)=\norm{x_k-x^*}\le\mu\theta^k\norm{x_0-x^*}=O(\theta^k)$, so the
distance converges R-linearly with ratio $\theta$. The residual and the gap inherit this rate: the upper
bound in \eqref{eq:res-eb} gives $\norm{d_\alpha(x_k)}\le\big(1+\mu(1+\alpha\mu L)\big)\norm{x_k-x^*}=
O(\theta^k)$, and \eqref{eq:gap-ub} gives $g_\alpha(x_k)\le M_F\norm{d_\alpha(x_k)}=O(\theta^k)$.

Squaring, $\norm{d_\alpha(x_k)}^2=O(\theta^{2k})$. For the D-gap, the upper bound in
\eqref{eq:dgap-twosided}, the norm equivalence $\norm{\cdot}_*^2\le\mu\norm{\cdot}^2$, and the residual
bound at $t=\beta$ give
$g_{\alpha\beta}(x_k)\le\tfrac{\mu}{2}\big(\tfrac1\alpha-\tfrac1\beta\big)\norm{d_\beta(x_k)}^2=O(\theta^{2k})$.
Hence the squared residual and the D-gap converge with ratio $\theta^2$.
\end{proof}

\subsection{Variable metric}\label{ss:varmetric}

Theorems~\ref{thm:global}--\ref{thm:rate} fix the scaling matrix. We now let it vary with $k$, taking
$\Dm_k\in\Dfam_\mu$, and write $g_{\alpha,\Dm_k}$ for the gap \eqref{eq:gap} with $\Dm=\Dm_k$. The
contraction behind Theorem~\ref{thm:rate} uses only the spectral bounds of $\Dfam_\mu$, so it holds with the
same $q$ in each $\Dm_k^{-1}$-norm. The moving-average reference of the line search compares values of the
changing merit $g_{\alpha,\Dm_k}$, so here we drive the iteration with a fixed relaxation step
$\lambda\in(0,1]$ instead. The next theorem shows that a summable change in the scaling matrices then
preserves the R-linear rate, inflating only its constant.

\begin{theorem}[Variable metric, V1]\label{thm:varmetric}
Under Assumptions~\ref{as:standing} and~\ref{as:v1}, suppose moreover that $\alpha\in(0,2m/(\mu^3L^2))$ and
$\sum_{k\ge0}\norm{\Dm_{k+1}^{-1}-\Dm_k^{-1}}<\infty$ in the spectral norm. Run the relaxed iteration
$x_{k+1}=(1-\lambda)x_k+\lambda\,y_{\alpha,\Dm_k}(x_k)$ with a fixed step $\lambda\in(0,1]$, and set
\[
  q=\sqrt{1-\tfrac{2\alpha m}{\mu}+\alpha^2\mu^2L^2}\in[0,1),\qquad \theta=1-\lambda(1-q)\in(0,1),
\]
and $M=\exp\!\big(\mu\sum_{k\ge0}\norm{\Dm_{k+1}^{-1}-\Dm_k^{-1}}\big)$. Then $x_k$ converges to $x^*$
R-linearly with ratio $\theta$, that is,
\[
  \norm{x_k-x^*}\ \le\ \mu\sqrt M\,\theta^{k}\,\norm{x_0-x^*}\qquad\text{for all }k.
\]
When $\Dm_k\equiv\Dm$, $M=1$ and the bound reduces to Theorem~\ref{thm:rate}.
\end{theorem}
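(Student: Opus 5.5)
We follow the proof of Theorem~\ref{thm:rate}, but now measure the error in the moving norm $\norm{\cdot}_{(k)}:=\norm{\cdot}_{\Dm_k^{-1}}$. There are three steps: a per-step contraction in $\norm{\cdot}_{(k)}$, a bound on the cost of switching from $\norm{\cdot}_{(k)}$ to $\norm{\cdot}_{(k+1)}$, and a product estimate that turns the summable metric changes into the constant $M$.

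\emph{Per-step contraction.} Fix $k$ and write $e_k=x_k-x^*$. By Proposition~\ref{prop:fixed}(i), which holds for every $\Dm\in\Dfam^+$, $x^*$ is a fixed point of $y_{\alpha,\Dm_k}$. The contraction argument \eqref{eq:rate-exp}--\eqref{eq:rate-q} uses only the spectral bounds of $\Dfam_\mu$ (Remark~\ref{rem:scaling}), so it applies verbatim with $\Dm=\Dm_k$:
\[
\norm{y_{\alpha,\Dm_k}(x_k)-x^*}_{(k)}\le q\norm{e_k}_{(k)}.
\]
The relaxed step with fixed $\lambda$ then gives, exactly as in \eqref{eq:rate-relax},
\[
\norm{e_{k+1}}_{(k)}\le(1-\lambda(1-q))\norm{e_k}_{(k)}=\theta\norm{e_k}_{(k)}.
\]

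\emph{Change of metric.} For any $z$,
\[
\norm{z}_{(k+1)}^2=\norm{z}_{(k)}^2+\inner{z}{(\Dm_{k+1}^{-1}-\Dm_k^{-1})z}\le\norm{z}_{(k)}^2+\varepsilon_k\norm{z}^2,
\qquad \varepsilon_k:=\norm{\Dm_{k+1}^{-1}-\Dm_k^{-1}}.
\]
Since $\norm{z}^2\le\mu\norm{z}_{(k)}^2$ (because $\Dm_k^{-1}\in\Dfam_\mu$), this yields
\[
\norm{z}_{(k+1)}^2\le(1+\mu\varepsilon_k)\norm{z}_{(k)}^2\le e^{\mu\varepsilon_k}\norm{z}_{(k)}^2.
\]
Taking $z=e_{k+1}$ and combining with the contraction,
\[
\norm{e_{k+1}}_{(k+1)}^2\le e^{\mu\varepsilon_k}\theta^2\norm{e_k}_{(k)}^2.
\]

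\emph{Iteration and conversion.} Iterating the last inequality gives
\[
\norm{e_k}_{(k)}^2\le\theta^{2k}\exp\Big(\mu\sum_{j<k}\varepsilon_j\Big)\norm{e_0}_{(0)}^2\le M\theta^{2k}\norm{e_0}_{(0)}^2.
\]
Converting to Euclidean norms with $\norm{e_k}\le\sqrt\mu\norm{e_k}_{(k)}$ and $\norm{e_0}_{(0)}\le\sqrt\mu\norm{e_0}$ gives
\[
\norm{x_k-x^*}\le\mu\sqrt M\,\theta^k\norm{x_0-x^*}.
\]
When $\Dm_k\equiv\Dm$, every $\varepsilon_k$ vanishes, so $M=1$ and the bound reduces to \eqref{eq:rate}.

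\emph{Main obstacle.} The delicate step is the change of metric, since the contraction is measured in a norm that shifts between iterations. The only potential danger is producing a constant of the wrong form. Working with squared norms and the additive perturbation of the quadratic form, as above, gives exactly the factor $e^{\mu\varepsilon_k}$, whose product is $M$, and the square root appears upon conversion. One also has to check that the contraction in \eqref{eq:rate-exp}--\eqref{eq:rate-q} nowhere used that $\Dm$ was fixed across iterations; it did not.
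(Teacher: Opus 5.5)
Your proposal is correct and follows essentially the same route as the paper. Both arguments use the contraction in the current $\Dm_k^{-1}$-norm, whose ratio $q$ depends only on the spectral bounds. The metric switch is handled by $\norm{e_{k+1}}_{\Dm_{k+1}^{-1}}^2\le(1+\mu\varepsilon_k)\,\theta^2\norm{e_k}_{\Dm_k^{-1}}^2$, the product is bounded by $\prod_j(1+\mu\varepsilon_j)\le M$, and two $\sqrt\mu$ conversions give the factor $\mu\sqrt M$. You also note explicitly that $x^*$ is a common fixed point of every $y_{\alpha,\Dm_k}$ by Proposition~\ref{prop:fixed}(i); the paper uses this only implicitly.
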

\begin{proof}
Let $V_k=\norm{x_k-x^*}_{\Dm_k^{-1}}^2$. We bound $V_{k+1}$ in two steps: the step map contracts in the
current $\Dm_k^{-1}$-norm, and changing the scaling matrix perturbs $V$ by a summable factor.

In the $\Dm_k^{-1}$-norm, the contraction in the proof of Theorem~\ref{thm:rate} applies with
$\Dm=\Dm_k$. Its constant $q$ uses only the spectral bounds $[1/\mu,\mu]$, not the particular $\Dm_k$, so
\[
  \norm{y_k-x^*}_{\Dm_k^{-1}}\ \le\ q\,\norm{x_k-x^*}_{\Dm_k^{-1}}
\]
holds for every $k$. With the fixed step $x_{k+1}=(1-\lambda)x_k+\lambda y_k$, $\lambda\in(0,1]$, convexity
of $\norm{\cdot}_{\Dm_k^{-1}}$ gives
\begin{equation}\label{eq:vm-contract}
  \norm{x_{k+1}-x^*}_{\Dm_k^{-1}}\ \le\ \big(1-\lambda(1-q)\big)\norm{x_k-x^*}_{\Dm_k^{-1}}
  \ =\ \theta\,\norm{x_k-x^*}_{\Dm_k^{-1}} .
\end{equation}
Writing $\epsilon_k=\norm{\Dm_{k+1}^{-1}-\Dm_k^{-1}}$ and using $\norm{v}^2\le\mu\norm{v}_{\Dm_k^{-1}}^2$
(Remark~\ref{rem:scaling}) together with \eqref{eq:vm-contract},
\[
  V_{k+1}=\norm{x_{k+1}-x^*}_{\Dm_{k+1}^{-1}}^2
  \ \le\ \norm{x_{k+1}-x^*}_{\Dm_k^{-1}}^2+\epsilon_k\norm{x_{k+1}-x^*}^2
  \ \le\ (1+\mu\epsilon_k)\,\theta^2 V_k .
\]
Iterating this recursion from $V_0$ gives
\[
  V_k\ \le\ \Big(\prod_{j=0}^{k-1}(1+\mu\epsilon_j)\Big)\theta^{2k}V_0 .
\]
Since $\sum_j\epsilon_j<\infty$ and $1+t\le e^t$, the product is bounded uniformly in $k$,
\[
  \prod_{j=0}^{k-1}(1+\mu\epsilon_j)\ \le\ \prod_{j\ge0}(1+\mu\epsilon_j)
  \ \le\ \exp\!\Big(\mu\sum_{j\ge0}\epsilon_j\Big)\ =\ M ,
\]
so $V_k\le M\theta^{2k}V_0$. Converting back to the Euclidean norm by Remark~\ref{rem:scaling}, 
we have
\[
\begin{array}{lcl}
  \norm{x_k-x^*}^2\ \le\ \mu V_k\ &\le&\ \mu M\theta^{2k}V_0\\[2mm]
  &=& \mu M\theta^{2k}\norm{x_0-x^*}_{\Dm_0^{-1}}^2 \\[2mm]
  &\le& \mu^2 M\theta^{2k}\norm{x_0-x^*}^2 \\[2mm]
\end{array}
\]
and taking square roots gives $\norm{x_k-x^*}\le\mu\sqrt M\,\theta^k\norm{x_0-x^*}$.
\end{proof}

\begin{remark}\label{rem:varmetric}
The hypothesis $\sum_k\norm{\Dm_{k+1}^{-1}-\Dm_k^{-1}}<\infty$ makes the consecutive changes summable, so
the inverse scaling matrices converge, $\Dm_k^{-1}\to\bar\Dm^{-1}$ for some $\bar\Dm\in\Dfam_\mu$. It implies the
variable-metric condition 
\[
(1+\eta_k)\Dm_k^{-1}\succeq\Dm_{k+1}^{-1}, \quad \sum_k\eta_k<\infty,
\]
of Combettes--V\~u~\cite{Combettes2014}, where $A\succeq B$ means $A-B$ is positive semidefinite and
\[
\eta_k=\mu\norm{\Dm_{k+1}^{-1}-\Dm_k^{-1}}.
\]
Under it the variation of the scaling matrix costs nothing in the rate: the bound of
Theorem~\ref{thm:varmetric} keeps the fixed-metric ratio $\theta$ and inflates only the constant, by
$\sqrt M=\exp\!\big(\tfrac\mu2\sum_k\norm{\Dm_{k+1}^{-1}-\Dm_k^{-1}}\big)$.
\end{remark}

\subsection{Mixed variational inequalities}\label{ss:mvi}

The smooth-gap route (V1) used $g_\alpha\in C^1$, which fails when the nonsmooth term $\varphi$ is
present. The strong-monotone rate, however, rests on the contraction of the step map, and this carries
over once the projection is replaced by the scaled resolvent. For \ref{eq:mvi} the inner
subproblem is solved by the scaled forward--backward step
\[
  y_\alpha(x)=\prox^{\Dm^{-1}}_{\alpha(\varphi+\iota_K)}\!\big(x-\alpha\Dm F(x)\big)
  \in\argmin_{y\in K}\Big\{\inner{F(x)}{y-x}+\varphi(y)+\tfrac1{2\alpha}\norm{y-x}_{\Dm^{-1}}^2\Big\},
\]
with $\iota_K$ the indicator of $K$ and $\prox^{\Dm^{-1}}$ the scaled proximal operator
(Definition~\ref{def:prox}). The MVI gap is
\[
  g_\alpha^{\mathrm{MVI}}(x)=-\min_{y\in K}\Big\{\inner{F(x)}{y-x}+\varphi(y)-\varphi(x)+\tfrac1{2\alpha}\norm{y-x}_{\Dm^{-1}}^2\Big\}\ \ge\ 0,
\]
vanishing iff $x$ solves \ref{eq:mvi} (Proposition~\ref{prop:fixed}(ii) with
$G(x,y)=\inner{F(x)}{y-x}+\varphi(y)-\varphi(x)$, convex in $y$). The optimality of $y_\alpha(x)$ for
the subproblem gives (cf.\ the VI bound~\eqref{eq:gap-lb}) $g_\alpha^{\mathrm{MVI}}(x)\ge\tfrac1{2\alpha}\norm{d}_{\Dm^{-1}}^2$
with $d=y_\alpha(x)-x$.

\begin{theorem}[Mixed VI, strongly monotone]\label{thm:mvi}
Under Assumption~\ref{as:standing}, suppose moreover that $F$ is strongly monotone with modulus $m$ and
$L$-Lipschitz, $\varphi$ is proper, convex, and lower semicontinuous, and $\alpha\in(0,2m/(\mu^3L^2))$. Let
the scaling matrices be fixed ($\Dm_k\equiv\Dm$) or vary with $\sum_k\norm{\Dm_{k+1}^{-1}-\Dm_k^{-1}}<\infty$,
take a fixed relaxation step $\lambda\in(0,1]$, and set
\[
  q=\sqrt{1-\tfrac{2\alpha m}{\mu}+\alpha^2\mu^2L^2}\in[0,1),\qquad \theta=1-\lambda(1-q)\in(0,1).
\]
Then \ref{eq:mvi} has a unique solution $x^*$, and the relaxed iteration
$x_{k+1}=(1-\lambda)x_k+\lambda\,y_{\alpha,\Dm_k}(x_k)$ has the following properties.
\begin{enumerate}[label=(\roman*),leftmargin=2em]
  \item The iterates converge to $x^*$ R-linearly with ratio $\theta$,
  \[
    \norm{x_k-x^*}\ \le\ C\,\theta^{k}\,\norm{x_0-x^*}\qquad\text{for all }k,
  \]
  where $C=\mu$ when $\Dm_k\equiv\Dm$ and $C=\mu\sqrt M$ under the controlled-change condition, with $M$ as
  in Theorem~\ref{thm:varmetric}. In particular $\norm{d_k}$ and $\dist(x_k,\Sol)$ converge to $0$
  R-linearly with ratio $\theta$.
  \item If moreover $\partial\varphi$ is bounded on the iterate set, for instance $\varphi$ is Lipschitz
  near $\Sol$, then $g_\alpha^{\mathrm{MVI}}(x_k)\to0$ R-linearly with ratio $\theta$.
\end{enumerate}
\end{theorem}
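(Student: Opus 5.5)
The plan is to reduce everything to the contraction argument already used in Theorems~\ref{thm:rate} and~\ref{thm:varmetric}, with the projection replaced by the scaled resolvent.

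\textbf{Existence and uniqueness.} I would get a unique solution from strong monotonicity, and obtain it as the unique fixed point of the step map $T(x)=y_\alpha(x)$. The key steps are as follows.

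\textbf{Fixed-point characterization.} First I show that $x^*$ solves \ref{eq:mvi} iff $x^*=\prox^{\Dm^{-1}}_{\alpha(\varphi+\iota_K)}(x^*-\alpha\Dm F(x^*))$. The optimality condition of the strongly convex subproblem reads $0\in F(x)+\partial(\varphi+\iota_K)(y)+\tfrac1\alpha\Dm^{-1}(y-x)$. At $y=x$ this is exactly the MVI. Equivalently, apply Proposition~\ref{prop:fixed}(ii) with $G(x,y)=\inner{F(x)}{y-x}+\varphi(y)-\varphi(x)$.

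\textbf{Contraction.} Next I show $T$ is a $q$-contraction in $\norm{\cdot}_*=\norm{\cdot}_{\Dm^{-1}}$. The scaled proximal operator $\prox^{\Dm^{-1}}_h$ of a convex lsc $h$ is firmly nonexpansive, hence nonexpansive, in the $\Dm^{-1}$-norm. This is the standard resolvent argument: add the two optimality inequalities for $p_i=\prox(z_i)$ to get $\norm{p_1-p_2}_*^2\le\inner{p_1-p_2}{\Dm^{-1}(z_1-z_2)}$. The estimates \eqref{eq:rate-exp}--\eqref{eq:rate-q} then go through verbatim, giving $\norm{T(x)-T(x')}_*\le q\norm{x-x'}_*$ for all $x,x'\in K$. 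Since $q<1$, Banach's theorem on the closed set $K$ (note $T(K)\subseteq K$) yields a unique fixed point. By the characterization, this fixed point is the unique solution $x^*$, independent of $\Dm$ and of $\alpha$.

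\textbf{Part (i).} For fixed $\Dm$, the relaxed step gives $\norm{x_{k+1}-x^*}_*\le\theta\norm{x_k-x^*}_*$ exactly as in \eqref{eq:rate-relax}. Converting norms gives the constant $\mu$. Under the controlled-change condition I copy the $V_k$-recursion of Theorem~\ref{thm:varmetric} word for word, since it used only the per-step contraction with a $\Dm_k$-independent $q$. That yields $C=\mu\sqrt M$. For $\norm{d_k}$, I bound it by the triangle inequality plus the contraction: $\norm{d_k}_*\le\norm{y_k-x^*}_*+\norm{x_k-x^*}_*\le(1+q)\norm{x_k-x^*}_*$. This is $O(\theta^k)$ after norm conversion, and $\dist(x_k,\Sol)=\norm{x_k-x^*}$.

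\textbf{Part (ii).} I expect this to be the main obstacle, because $g^{\mathrm{MVI}}_\alpha$ is not smooth and the VI bound \eqref{eq:gap-ub} relied on the linear form of the objective. I would evaluate the gap at its minimizer:
\[
g_\alpha^{\mathrm{MVI}}(x_k)=-\inner{F(x_k)}{d_k}+\varphi(x_k)-\varphi(y_k)-\tfrac1{2\alpha}\norm{d_k}_*^2\le\big(\norm{F(x_k)}+\norm{\xi_k}\big)\norm{d_k}.
\]
Here $\xi_k\in\partial\varphi(x_k)$, using the subgradient inequality $\varphi(y_k)\ge\varphi(x_k)+\inner{\xi_k}{d_k}$. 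This requires $\partial\varphi(x_k)\neq\emptyset$, which holds by the boundedness hypothesis. The iterates are bounded by (i), so $\norm{F(x_k)}$ is bounded by Lipschitz continuity. The bounded-subdifferential assumption bounds $\norm{\xi_k}$. Hence $g_\alpha^{\mathrm{MVI}}(x_k)\le C'\norm{d_k}=O(\theta^k)$. The care needed is only in checking that the subgradient is taken at $x_k$, where the hypothesis applies, and that $x_k\in\operatorname{dom}\varphi$. The latter holds for $k\ge1$, since $y_k$ and hence $x_{k+1}$ lie in the convex domain, provided $x_0$ does.
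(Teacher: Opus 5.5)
Your proposal is correct and follows essentially the same route as the paper. Both arguments combine the nonexpansive scaled resolvent in $\norm{\cdot}_{\Dm^{-1}}$ with the $q$-contractive forward map $I-\alpha\Dm F$, then use Banach's theorem with Proposition~\ref{prop:fixed}(ii) for the unique solution, the relaxed-step and $V_k$ recursions for part~(i), and the subgradient inequality at $x_k$ for part~(ii). Your upper bound $\norm{d_k}_*\le(1+q)\norm{x_k-x^*}_*$ is the ingredient actually needed for the R-linear decay of the residual; the paper's proof records only the lower bound $(1-q)\norm{x-x^*}_*\le\norm{d(x)}_*$, which gives the error bound rather than that decay.
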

\begin{proof}
Write $\norm{\cdot}_*=\norm{\cdot}_{\Dm^{-1}}$ and let
$T(x)=y_{\alpha,\Dm}(x)=\prox^{\Dm^{-1}}_{\alpha(\varphi+\iota_K)}(x-\alpha\Dm F(x))$ be the step map.

The scaled proximal map $\prox^{\Dm^{-1}}_{\alpha(\varphi+\iota_K)}$ is the resolvent of
$\alpha\Dm\,\partial(\varphi+\iota_K)$, hence nonexpansive in $\norm{\cdot}_*$~\cite{Combettes2014}. By strong
monotonicity and $L$-Lipschitz continuity of $F$ together with the spectral bounds (the estimates
\eqref{eq:rate-bd}), the forward map $I-\alpha\Dm F$ satisfies
\[
  \norm{(x-\alpha\Dm F(x))-(x'-\alpha\Dm F(x'))}_*\ \le\ q\,\norm{x-x'}_*\qquad\text{for all }x,x'\in K .
\]
Composing the two maps gives $\norm{T(x)-T(x')}_*\le q\norm{x-x'}_*$, and $q<1$ because
$\alpha<2m/(\mu^3L^2)$.

As $K$ is closed it is complete in $\norm{\cdot}_*$, so by Banach's theorem $T$ has a unique fixed point,
which by Proposition~\ref{prop:fixed}(ii) is the unique solution $x^*$ of \ref{eq:mvi}. Taking $x'=x^*$ in
the contraction gives $\norm{y_{\alpha,\Dm_k}(x_k)-x^*}_*\le q\norm{x_k-x^*}_*$, and the fixed relaxation
step inherits it,
\[
  \norm{x_{k+1}-x^*}_*\ \le\ \big(1-\lambda(1-q)\big)\norm{x_k-x^*}_*\ =\ \theta\,\norm{x_k-x^*}_* ,
\]
as in \eqref{eq:rate-relax}. Iterating and converting to the Euclidean norm gives part~(i) with $C=\mu$.
Under the controlled-change condition the argument of Theorem~\ref{thm:varmetric} replaces $\mu$ by
$\mu\sqrt M$.

For the residual, write $d(x)=T(x)-x$ and $d_k=d(x_k)$. Using $T(x^*)=x^*$, the triangle inequality and the
contraction give
\[
  \norm{x-x^*}_*\ \le\ \norm{d(x)}_*+\norm{T(x)-x^*}_*\ \le\ \norm{d(x)}_*+q\,\norm{x-x^*}_* ,
\]
so $\norm{d(x)}_*\ge(1-q)\norm{x-x^*}_*$ for all $x\in K$ (as in Proposition~\ref{prop:merit-equiv}). At the
iterates this makes $\norm{d_k}$ and $\dist(x_k,\Sol)$ R-linear, and with the subproblem lower bound
$g_\alpha^{\mathrm{MVI}}(x)\ge\tfrac1{2\alpha}\norm{d(x)}_*^2$ it yields the error bound
\[
  g_\alpha^{\mathrm{MVI}}(x)\ \ge\ \tfrac1{2\alpha}\norm{d(x)}_*^2\ \ge\ \tfrac{(1-q)^2}{2\alpha\mu}\,\dist(x,\Sol)^2\qquad\forall\,x\in K .
\]

For part~(ii), at the minimizer $y_\alpha(x)$ drop the nonnegative regularizer and apply the subgradient
inequality for $\varphi$ at $x$ (any $\xi\in\partial\varphi(x)$),
\[
  g_\alpha^{\mathrm{MVI}}(x)\ \le\ \inner{F(x)}{x-y_\alpha(x)}+\varphi(x)-\varphi(y_\alpha(x))
  \ \le\ \inner{F(x)+\xi}{x-y_\alpha(x)}\ \le\ \norm{F(x)+\xi}\,\norm{d(x)} .
\]
If $\partial\varphi$ is bounded along the iterates, the right side at $x=x_k$ is $O(\theta^k)$, so
$g_\alpha^{\mathrm{MVI}}(x_k)\to0$ R-linearly.
\end{proof}

\begin{remark}\label{rem:mvi}
When $\varphi$ is nonsmooth, $g_\alpha^{\mathrm{MVI}}$ is not $C^1$, so the smooth-gap certificate (V1)
does not apply. The rate instead comes from the resolvent contraction with a fixed relaxation step
$\lambda\in(0,1]$, or a sufficient-decrease test on the computable gap value $g_\alpha^{\mathrm{MVI}}$. A smooth merit enabling a gradient-based line search is available
through the forward--backward envelope, which we do not pursue here.
\end{remark}

We illustrate Theorem~\ref{thm:mvi} in the next example with a nonsmooth $\ell_1$ penalty.

\begin{example}[A mixed VI satisfying Theorem~\ref{thm:mvi}]\label{ex:mvi}
Keep $F(x)=Mx+q$, $M=mI+J$, $m=1$, on $K=[-1,1]^2$ from Example~\ref{ex:vi}, and add
$\varphi(x)=t\norm{x}_1$ with $t=\tfrac12$. Then $\varphi$ is proper, convex, and continuous (in particular lower semicontinuous), so
with the strong monotonicity ($m=1$) and Lipschitz continuity ($L=\sqrt5$) of $F$ the hypotheses of
Theorem~\ref{thm:mvi} hold. Its subdifferential is uniformly bounded,
\[
  \xi\in\partial\varphi(x)\ \Longrightarrow\ \norm{\xi}\le t\sqrt n=\tfrac{\sqrt2}{2},
\]
since $\partial\norm{\cdot}_1(x)\subseteq[-1,1]^n$, so the bounded-subgradient clause of
Theorem~\ref{thm:mvi}(ii) holds and $g_\alpha^{\mathrm{MVI}}(x_k)\to0$ R-linearly. With $\Dm=I$ ($\mu=1$) the rate window and ratio match Remark~\ref{rem:exrate} ($q=\sqrt{0.8}$ at
$\alpha=0.2$). For the fixed step $\lambda\in(0,1]$ the contraction factor is $\theta=1-\lambda(1-q)\in(0,1)$. The $\ell_1$ term is active at the solution. The unique solution is the interior point $x^*=(\tfrac12,0)$, at
which the optimality condition $-F(x^*)\in\partial\varphi(x^*)$ holds,
\[
  \partial\varphi(x^*)=\{\tfrac12\}\times[-\tfrac12,\tfrac12],\qquad
  -F(x^*)=\big(\tfrac12,\tfrac12\big)\in\partial\varphi(x^*).
\]
The penalty sparsifies the second coordinate, driving it from the value $0.3$ at the $\varphi\equiv0$
solution $(0.4,0.3)$ (Example~\ref{ex:vi}) to zero, so the problem is genuinely mixed.
\end{example}

\subsection{Equilibrium problems}\label{ss:ep}

Throughout this subsection $G\colon K\times K\to\R$ satisfies $G(x,x)=0$ with $G(x,\cdot)$ convex for
$x\in K$, and $G$ is differentiable with $\nabla_1 G,\nabla_2 G$ continuous on $K\times K$. By
Proposition~\ref{prop:epgap}, $g_\alpha^{\mathrm{EP}}$ is then a continuously differentiable gap function
for \ref{eq:ep}.

The inner minimization defining $g_\alpha^{\mathrm{EP}}$ has the unique minimizer
\[
  y_\alpha(x)\in\argmin_{y\in K}\Big\{G(x,y)+\tfrac1{2\alpha}\norm{y-x}_{\Dm^{-1}}^2\Big\},\qquad d=y_\alpha(x)-x.
\]
By convexity of $G(x,\cdot)$,
\begin{equation}\label{eq:ep-ub}
  g_\alpha^{\mathrm{EP}}(x)\le-G(x,y_\alpha(x))\le\inner{-\nabla_2 G(x,x)}{d}\le\norm{\nabla_2 G(x,x)}\,\norm{d},
\end{equation}
the EP analogue of \eqref{eq:gap-ub}.

\begin{theorem}[Equilibrium problems: global convergence and error bound]\label{thm:ep}
Suppose Assumption~\ref{as:standing} holds with the EP gap $g_\alpha^{\mathrm{EP}}$ as merit, that
$\nabla g_\alpha^{\mathrm{EP}}$ is Lipschitz on a bounded convex set $\mathcal C_0$ containing every segment
$[x_k,y_\alpha(x_k)]$ (automatic when $\nabla_1 G,\nabla_2 G$ are locally Lipschitz, the iterates being
bounded), and that the direction condition (Assumption~\ref{as:dir}) holds with constant
$\kappa$. Then the SGFM iteration with certificate $v_k=\nabla g_\alpha^{\mathrm{EP}}(x_k)$ has the following
properties.
\begin{enumerate}[label=(\roman*),leftmargin=2em]
  \item $\sum_k\norm{d_k}^2<\infty$, hence $\norm{d_k}\to0$ and $g_\alpha^{\mathrm{EP}}(x_k)\to0$, and every
  accumulation point of $\{x_k\}$ solves \ref{eq:ep}.
  \item If moreover $G$ is strongly monotone with modulus $c$ (Definition~\ref{def:bifun}) and
  $\alpha>\mu/(2c)$, then
  \[
    g_\alpha^{\mathrm{EP}}(x)\ \ge\ \big(c-\tfrac{\mu}{2\alpha}\big)\dist(x,\Sol)^2\qquad\forall\,x\in K,
  \]
  so $x_k\to x^*$, the unique solution of \ref{eq:ep}.
\end{enumerate}
\end{theorem}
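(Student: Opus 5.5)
The plan is to obtain part~(i) by reusing the line-search analysis of Lemma~\ref{lem:descent} through Remark~\ref{rem:descent-generic}. Part~(ii) then comes from a direct comparison argument in the inner minimization defining $g_\alpha^{\mathrm{EP}}$, choosing the comparison point $y=x^*$.

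\textbf{Part (i).} I will check the three properties that Remark~\ref{rem:descent-generic} requires of the merit $g_\alpha^{\mathrm{EP}}$.
\begin{itemize}[leftmargin=2em]
  \item It is nonnegative and $C^1$ by Proposition~\ref{prop:epgap}.
  \item Its gradient is $L_g$-Lipschitz on the bounded convex set $\mathcal C_0$ containing the segments $[x_k,y_\alpha(x_k)]$; this is a hypothesis.
  \item The direction bound $\inner{v_k}{d_k}\le-\kappa\norm{d_k}^2$ holds with $v_k=\nabla g_\alpha^{\mathrm{EP}}(x_k)$ by Assumption~\ref{as:dir}.
\end{itemize}
Lemma~\ref{lem:descent} then holds verbatim for $g_\alpha^{\mathrm{EP}}$. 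Its part~(ii) gives $g_\alpha^{\mathrm{EP}}(x_k)\le\mathcal T_k\le\mathcal T_0$, so every iterate lies in the sublevel set $\{g_\alpha^{\mathrm{EP}}\le\mathcal T_0\}$. This set is bounded by the coercivity in Assumption~\ref{as:standing}(ii).

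Telescoping Lemma~\ref{lem:descent}(iii) exactly as in the proof of Theorem~\ref{thm:global} gives $(1-\eta_{\max})\rho\sum_k\norm{d_k}^2\le\mathcal T_0$. Hence $\sum_k\norm{d_k}^2<\infty$ and $\norm{d_k}\to0$.

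To pass to the gap I will use \eqref{eq:ep-ub}, which gives $g_\alpha^{\mathrm{EP}}(x_k)\le\norm{\nabla_2G(x_k,x_k)}\,\norm{d_k}$. The iterates lie in a bounded subset of the closed set $K$, whose closure is compact, and $\nabla_2 G$ is continuous. So $\sup_k\norm{\nabla_2G(x_k,x_k)}<\infty$, and therefore $g_\alpha^{\mathrm{EP}}(x_k)\to0$.

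For an accumulation point $\bar x$, note that $\bar x\in K$ because $K$ is closed. Continuity of $g_\alpha^{\mathrm{EP}}$ gives $g_\alpha^{\mathrm{EP}}(\bar x)=0$, and Proposition~\ref{prop:epgap}(i) (equivalently Proposition~\ref{prop:fixed}(ii)) shows that $\bar x$ solves \ref{eq:ep}.

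\textbf{Part (ii).} Strong monotonicity first gives uniqueness. If $x_1,x_2\in\Sol$, then $0\le G(x_1,x_2)+G(x_2,x_1)\le-c\norm{x_1-x_2}^2$, so $x_1=x_2$. Existence is Assumption~\ref{as:standing}(i), so $\Sol=\{x^*\}$.

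For the error bound, fix $x\in K$ and bound the minimum in \eqref{eq:epgap} above by its value at the feasible point $y=x^*$:
\[
  g_\alpha^{\mathrm{EP}}(x)\ \ge\ -G(x,x^*)-\tfrac1{2\alpha}\norm{x^*-x}_{\Dm^{-1}}^2 .
\]
Strong monotonicity gives $G(x,x^*)\le-c\norm{x-x^*}^2-G(x^*,x)$. Since $x^*$ solves \ref{eq:ep}, $G(x^*,x)\ge0$, so $-G(x,x^*)\ge c\norm{x-x^*}^2$. Remark~\ref{rem:scaling} gives $\norm{\cdot}_{\Dm^{-1}}^2\le\mu\norm{\cdot}^2$. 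Combining these yields
\[
  g_\alpha^{\mathrm{EP}}(x)\ \ge\ \big(c-\tfrac{\mu}{2\alpha}\big)\norm{x-x^*}^2 ,
\]
and the constant is positive because $\alpha>\mu/(2c)$. Together with $g_\alpha^{\mathrm{EP}}(x_k)\to0$ from part~(i), this gives $\norm{x_k-x^*}^2\le g_\alpha^{\mathrm{EP}}(x_k)/(c-\mu/(2\alpha))\to0$.

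\textbf{Main obstacle.} The delicate step is making sure that Lemma~\ref{lem:descent} really transfers. The trial points must stay in $\mathcal C_0$, which needs $\lambda\le\lambda_{\max}\le1$ and the iterates in the sublevel set; both come out of the same induction as in that lemma. The other point to secure is the uniform bound on $\nabla_2G(x_k,x_k)$ used in \eqref{eq:ep-ub}. If coercivity were unavailable, I would instead invoke the error bound of part~(ii) to get boundedness, but in general I rely on Assumption~\ref{as:standing}(ii).
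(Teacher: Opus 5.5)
Your proposal is correct. Part (i) follows the paper's proof closely: Lemma~\ref{lem:descent} via Remark~\ref{rem:descent-generic}, telescoping, coercivity for boundedness of the iterates, then \eqref{eq:ep-ub}.

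The only variation in part (i) is at the accumulation point. You pass to the limit in the continuous gap $g_\alpha^{\mathrm{EP}}$ and use the gap-function property of Proposition~\ref{prop:epgap}(i). The paper instead invokes Berge's maximum theorem to get continuity of $y_\alpha$ and concludes from $y_\alpha(\bar x)-\bar x=\lim_j d_{k_j}=0$. Your version is slightly more economical, since continuity of $g_\alpha^{\mathrm{EP}}$ is already supplied by Proposition~\ref{prop:epgap}(ii). The paper's version needs joint continuity and strong convexity of the inner objective.

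The genuine difference is in part (ii). The paper obtains the error bound by citing Mastroeni's Proposition~4.2 for the regularizer $H(x,y)=\tfrac1{2\alpha}\norm{x-y}_{\Dm^{-1}}^2$, after checking that $\nabla H$ is Lipschitz with modulus $\mu/\alpha<2c$. You prove the bound directly: you evaluate the inner minimum at the feasible point $y=x^*$ and combine strong monotonicity, $G(x^*,x)\ge0$, and the spectral bound $\norm{\cdot}_{\Dm^{-1}}^2\le\mu\norm{\cdot}^2$.

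Your argument has several advantages:
\begin{itemize}
  \item It is self-contained and yields the same constant $c-\mu/(2\alpha)$.
  \item It shows the inequality holds for every $\alpha>0$; the threshold $\alpha>\mu/(2c)$ is needed only to make the constant positive.
  \item It uses neither differentiability of $G$ nor any Lipschitz property of $\nabla H$.
\end{itemize}
The paper's citation instead places the bound inside Mastroeni's framework for general regularizers $H$. You also prove uniqueness of $x^*$ explicitly, which the paper only asserts.
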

\begin{proof}
By Proposition~\ref{prop:epgap}, $g_\alpha^{\mathrm{EP}}\in C^1$, and by hypothesis $\nabla g_\alpha^{\mathrm{EP}}$
is Lipschitz on $\mathcal C_0$ and the direction condition holds,
$\inner{\nabla g_\alpha^{\mathrm{EP}}(x_k)}{d_k}\le-\kappa\norm{d_k}^2$. Lemma~\ref{lem:descent} therefore
applies with merit $g_\alpha^{\mathrm{EP}}$ and direction constant $\kappa$
(Remark~\ref{rem:descent-generic}): the line search returns $\lambda_k\ge\lambda_{\min}$, the reference
$\{\mathcal T_k\}$ is nonincreasing, and
\[
  \mathcal T_k-\mathcal T_{k+1}\ \ge\ (1-\eta_{\max})\,\delta_1\kappa\lambda_{\min}\,\norm{d_k}^2 .
\]
Summing over $k=0,\dots,N$ and using $\mathcal T_{N+1}\ge0$,
\[
  (1-\eta_{\max})\,\delta_1\kappa\lambda_{\min}\sum_{k=0}^{N}\norm{d_k}^2\ \le\ \mathcal T_0-\mathcal T_{N+1}\ \le\ \mathcal T_0 ,
\]
so $\sum_k\norm{d_k}^2<\infty$ and $\norm{d_k}\to0$.

By coercivity (Assumption~\ref{as:standing}) the sublevel set $\{g_\alpha^{\mathrm{EP}}\le\mathcal T_0\}$ is
bounded, and $g_\alpha^{\mathrm{EP}}(x_k)\le\mathcal T_k\le\mathcal T_0$ keeps the iterates in it, so $\{x_k\}$
is bounded. Continuity of $\nabla_2 G$ then bounds $\norm{\nabla_2 G(x_k,x_k)}$, and \eqref{eq:ep-ub} gives
\[
  g_\alpha^{\mathrm{EP}}(x_k)\ \le\ \norm{\nabla_2 G(x_k,x_k)}\,\norm{d_k}\ \longrightarrow\ 0 .
\]
Since $y_\alpha$ is continuous (Berge's maximum theorem: the inner objective is jointly continuous by the standing hypotheses of this
subsection and strongly convex in $y$) and $y_\alpha(x)=x$ iff $x$ solves \ref{eq:ep}, any accumulation point
$\bar x=\lim_j x_{k_j}$ of the bounded sequence satisfies $y_\alpha(\bar x)-\bar x=\lim_j d_{k_j}=0$, hence
solves \ref{eq:ep}. This proves part~(i).

For part~(ii), apply Mastroeni's error bound~\cite[Prop.~4.2]{Mastroeni2003} to the regularizer
$H(x,y)=\tfrac1{2\alpha}\norm{x-y}_{\Dm^{-1}}^2$. It is symmetric, and $\nabla H$ is Lipschitz in each
argument with modulus $\mu/\alpha$, which is below $2c$ exactly when $\alpha>\mu/(2c)$. Under strong
monotonicity of $G$ with modulus $c$ it yields
\[
  g_\alpha^{\mathrm{EP}}(x)\ \ge\ \big(c-\tfrac{\mu}{2\alpha}\big)\dist(x,\Sol)^2\qquad\forall\,x\in K .
\]
With $g_\alpha^{\mathrm{EP}}(x_k)\to0$ from part~(i) and $c-\tfrac{\mu}{2\alpha}>0$,
\[
  \dist(x_k,\Sol)^2\ \le\ \frac{g_\alpha^{\mathrm{EP}}(x_k)}{\,c-\tfrac{\mu}{2\alpha}\,}\ \longrightarrow\ 0 ,
\]
so $x_k\to x^*$, the unique solution (uniqueness by strong monotonicity).
\end{proof}

\begin{remark}\label{rem:epdir}
The direction condition (Assumption~\ref{as:dir}) holds for any differentiable, strongly monotone $G$, the
equilibrium bifunctions that encode VI and MVI under strong monotonicity of $F$ being the special
case~\cite[Props.~3.2 and~5.2]{Mastroeni2003}. The scaled regularizer
$H(x,y)=\tfrac1{2\alpha}\norm{y-x}_{\Dm^{-1}}^2$ satisfies $\nabla_1 H+\nabla_2 H=0$, which reduces the
regularized direction condition to the unregularized one. The latter is supplied by strong monotonicity of
$G$ (Prop.~5.2), and Prop.~3.2 then transfers the descent property to $g_\alpha^{\mathrm{EP}}$.
\end{remark}

The following example illustrates Theorem~\ref{thm:ep} with two equilibrium problems.

\begin{example}[Equilibrium instances for Theorem~\ref{thm:ep}]\label{ex:ep}
\emph{(a) Encoding the $\mathrm{VI}$ of Example~\ref{ex:vi}.} With the same $F$, set
$G(x,y)=\inner{F(x)}{y-x}$. Then $G(x,x)=0$ and $G(x,\cdot)$ is affine, hence convex, and $G$ is
differentiable with $\nabla_1 G,\nabla_2 G$ continuous, so $g_\alpha^{\mathrm{EP}}\in C^1$
(Proposition~\ref{prop:epgap}). It is strongly monotone with modulus $c=m=1$,
\[
  G(x,y)+G(y,x)=-\inner{F(x)-F(y)}{x-y}=-m\norm{x-y}^2 .
\]
The inner minimization reduces to the negative Fukushima maximization, so $g_\alpha^{\mathrm{EP}}=g_\alpha$
and the direction condition (Assumption~\ref{as:dir}) holds with $\kappa=m$ by~\eqref{eq:dir-m}, as
anticipated in Remark~\ref{rem:epdir}. For Theorem~\ref{thm:ep}(ii), $\mu=1$ and any $\alpha>\mu/(2c)=\tfrac12$
(e.g.\ $\alpha=1$) give $c-\tfrac{\mu}{2\alpha}=\tfrac12>0$, hence $g_\alpha^{\mathrm{EP}}(x)\ge\tfrac12\dist(x,\Sol)^2$.
Since $g_\alpha^{\mathrm{EP}}=g_\alpha$, Proposition~\ref{prop:eb} already gives a VI error bound for every
$\alpha>0$, so the EP threshold $\alpha>\tfrac12$ is only the conservative general-EP estimate. It is disjoint
from the rate window $(0,0.4)$ of Remark~\ref{rem:exrate}, the small $\alpha$ certifying the R-linear rate and the
large one the EP error bound (Remark~\ref{rem:eprate}).

\emph{(b) A genuinely coupled equilibrium problem.} Let $G(x,y)=\inner{Bx+Ay-a}{y-x}$ on $K=[-1,1]^2$ with
$A=\tfrac12 I$, $B=\tfrac32 I+J_0$, $J_0=\begin{pmatrix}0&\tfrac32\\-\tfrac32&0\end{pmatrix}$,
$a=(0.3,-0.2)^\top$, and $m=1$. Then $G(x,x)=0$, the Hessian of $G(x,\cdot)$ is $A+A^\top=I\succ0$ (so
$G(x,\cdot)$ is convex, and $g_\alpha^{\mathrm{EP}}\in C^1$ since $G$ is a polynomial), and since $A\neq0$
couples $y$ the problem is a genuine equilibrium problem. A direct expansion gives $G(x,y)+G(y,x)=(y-x)^\top(A-B)(y-x)=-m\norm{x-y}^2$,
so $c=m=1$. The direction condition (Assumption~\ref{as:dir}) then holds by Remark~\ref{rem:epdir}, and with
$\Dm=I$ ($\mu=1$) and $\alpha>\tfrac12$ Theorem~\ref{thm:ep} yields global convergence and the error bound.
\end{example}

\begin{remark}[Toward an EP rate]\label{rem:eprate}
The error bound is the EP analogue of \eqref{eq:eb-explicit}. With the sufficient decrease it controls the
distance, but as for the regularized VI gap (Remark~\ref{rem:firstorder}) the EP gap is first order in the
residual by \eqref{eq:ep-ub}, so an R-linear rate is not automatic. Such a rate would follow once the
auxiliary-EP map is a contraction, available under strong monotonicity of $G$ with a Lipschitz-type
condition~\cite{QuocTran2008}. There is a tension: the error bound needs $\alpha$ large
($\alpha>\mu/(2c)$) whereas a contraction typically needs $\alpha$ small. The strongly-quasiconvex case, deriving a gap error bound from strong quasiconvexity of
$G(x,\cdot)$, is open beyond the proximal setting~\cite{Iusem2024,Lara2024}. Both are
left to future work.
\end{remark}

\begin{remark}[Scope and extensions]\label{rem:flagship}
Beyond Section~\ref{ss:ep}, the remaining extension is the pseudomonotone (residual) route: for merely
pseudomonotone $F$ or $G$ (or nonsmooth data) the descent identity \eqref{eq:gap-descent} fails and the
smooth-gap certificate (V1) is replaced by the residual/separating-hyperplane certificate
(V2)~\cite{Solodov1999}. A gap-PL route via the globally differentiable D-gap $g_{\alpha\beta}$
(Definition~\ref{def:dgap}) contracts the merit directly~\cite{Li2025,Zhao2025b}. The
equilibrium rate is discussed in Remark~\ref{rem:eprate}. A further direction is the strongly-quasiconvex
case---deriving a gap error bound from strong quasiconvexity of $G(x,\cdot)$ (cf.\ the negative
observation of Remark~\ref{rem:sq})---which would carry the R-linear rate to that class.
\end{remark}

% ============================================================================
\section{Numerical experiments}\label{sec:experiments}
% ============================================================================

This section has two aims. The first is to test the theory on controlled problems, where the constants
$m$, $L$, $\mu$ and a solution $x^*$ are known, so the predictions can be read off directly. The second is
to compare SGFM with standard methods for variational inequalities and equilibrium problems. Every
experiment confirms a proved statement.

\subsection{Methods}\label{ss:methods}

The proposed method is SGFM. The external baseline is the extragradient method~\cite{Korpelevich1976} for
variational inequalities, and Mastroeni's gap-descent scheme~\cite{Mastroeni2003} and the D-gap descent of
Bigi and Passacantando~\cite{Bigi2015} for equilibrium problems. Two internal ablations each disable a
single component of SGFM: \emph{SGFM-Eucl} turns off the scaling ($\Dm_k=I$), and \emph{SGFM-mono} replaces the
modified non-monotone line search by a monotone one ($\eta=0$). Table~\ref{tab:methods} collects them.

\begin{table}[H]
  \centering
  \caption{Methods compared in Section~\ref{sec:experiments}. The proposed SGFM method is benchmarked against
  standard external baselines (one for variational inequalities, two for equilibrium problems) and against
  internal ablations that disable a single component.}
  \label{tab:methods}
  \begin{tabularx}{\linewidth}{@{}llXl@{}}
    \toprule
    Method & Class & Update rule & Source \\
    \midrule
    \textbf{SGFM} (proposed) & VI/MVI/EP & scaled projection/resolvent with a modified non-monotone line search on $g_\alpha$ & this paper \\
    Extragradient           & VI        & two projections per step, fixed step $\alpha\in(0,1/L)$                              & \cite{Korpelevich1976} \\
    Gap descent             & EP        & Armijo descent on the regularized gap                                                & \cite{Mastroeni2003} \\
    D-gap descent           & EP        & descent on a family of D-gap functions                                               & \cite{Bigi2015} \\
    \addlinespace
    SGFM-Eucl                & VI/MVI/EP & SGFM with $\Dm_k=I$ (scaling disabled)                                                & this paper \\
    SGFM-mono                & VI/MVI/EP & SGFM with a monotone Armijo line search ($\eta=0$)                                    & this paper \\
    \bottomrule
  \end{tabularx}
\end{table}

\subsection{Experiments}\label{ss:plan}

Table~\ref{tab:experiments} lists the experiments. Each one targets a single proved statement, on the
problem where its constants are known. Experiments~1--6 test the theory; Experiment~7 is the comparison,
reported in two summary tables (one for variational inequalities, one for equilibrium problems) of
iterations, operator evaluations, and running time.

\begin{table}[H]
  \centering
  \caption{The experiments. Each confirms one result on a controlled problem.}
  \label{tab:experiments}
  \begin{tabularx}{\linewidth}{@{}lXl@{}}
    \toprule
    \# & Experiment (what it checks) & Confirms \\
    \midrule
    1 & Convergence from several starting points (from anywhere in $K$)        & Thms~\ref{thm:global},~\ref{thm:mvi},~\ref{thm:ep} \\
    2 & Residual decays in a straight line on a log scale, at rate $\theta$ & Thm~\ref{thm:rate} \\
    3 & Best-iterate residual tracks the $O(1/\sqrt N)$ floor as $m\downarrow0$  & Cor.~\ref{cor:sublinear} \\
    4 & Diagonal vs.\ identity scaling on an ill-conditioned operator           & Thm~\ref{thm:varmetric} \\
    5 & Modified vs.\ monotone line search (rate unchanged)                     & Thms~\ref{thm:global},~\ref{thm:rate} \\
    6 & Gap and D-gap vanish together and track each other                      & Prop.~\ref{prop:merit-equiv} \\
    7 & Comparison with the external baselines                                  & --- \\
    \bottomrule
  \end{tabularx}
\end{table}

The strongly monotone problems (Problems~\ref{prob:affine} and~\ref{prob:smep}, and the ill-conditioned
variant of Problem~\ref{prob:affine}) carry the rate and variable-metric results, where $m$, $L$, $\mu$ and
$x^*$ are all known. The merely monotone problems (Problem~\ref{prob:cournot} and Problem~\ref{prob:weak} at
$\rho\to0$) lie outside the strongly monotone theory and are used only for the weaker statements
(Experiments~3 and~6) and for the comparison. Because SGFM is a projection method, the comparison spans both the strongly monotone regime it is built
for and rotation-dominated operators, with the outcome reported in Section~\ref{ss:res-compare}.

\subsection{Test problems}\label{ss:testbed}

The testbed has six problems: four controlled synthetic instances, on which $m$, $L$, $\mu$ and a known
$x^*$ make the predictions checkable, and two standard problems from the literature. Each is stated in full,
so it can be reproduced from the data given here.

\begin{problem}[Affine, strongly monotone, asymmetric variational inequality]\label{prob:affine}
On the box $K=[-1,1]^n$, solve $\mathrm{VI}(F,K)$ for $F(x)=Mx+q$ with
\[
  M=D+\rho I+J ,\qquad D=\operatorname{diag}(d_1,\dots,d_n) ,\qquad \rho>0 ,\qquad J^\top=-J ,
\]
where $d_1,\dots,d_n$ are log-spaced between $1$ and a value that sets the conditioning of the symmetric
part ($10$ by default; $1000$ in the ill-conditioned variant of Experiment~4), and $J$ is bidiagonal with
$J_{i,i+1}=s=-J_{i+1,i}$, so $s$ sets the asymmetry. The symmetric part $\tfrac12(M+M^\top)=D+\rho I\succ0$
gives strong monotonicity with modulus $m=1+\rho$ and Lipschitz constant $L=\norm{M}_2$, while $J\neq0$
makes $F$ a genuine non-gradient operator. The vector $q=-Mx^*$ is fixed from an interior point
$x^*\in(-1,1)^n$, which is then the unique solution. Defaults: $n=100$, $\rho=1$, $s=2$. This is the
$n$-dimensional form of Example~\ref{ex:vi}, and it drives the rate, scaling, variable-metric, and
merit-equivalence experiments.
\end{problem}

\begin{problem}[Kojima--Shindo nonlinear complementarity problem~\cite{Kojima1986}]\label{prob:ncp}
Find $x\in\R^4$ with $x\geq0$, $F(x)\geq0$, and $\inner{x}{F(x)}=0$, where
\[
  F(x)=\begin{pmatrix}
    3x_1^2+2x_1x_2+2x_2^2+x_3+3x_4-6\\
    2x_1^2+x_1+x_2^2+10x_3+2x_4-2\\
    3x_1^2+x_1x_2+2x_2^2+2x_3+9x_4-9\\
    x_1^2+3x_2^2+2x_3+3x_4-3
  \end{pmatrix}.
\]
The solution $x^*=\big(\sqrt6/2,\,0,\,0,\,1/2\big)$ is degenerate, with $F(x^*)=\big(0,\,2+\sqrt6/2,\,0,\,0\big)$,
so that $x_3^*=0$ and $F_3(x^*)=0$. This standard nonlinear benchmark extends the testbed beyond the affine
setting.
\end{problem}

\begin{problem}[Weakly monotone family]\label{prob:weak}
On $K=[-1,1]^n$, solve $\mathrm{VI}(F,K)$ for the one-parameter family $F(x)=(\rho I+J)x+q$, with $J$ the
bidiagonal skew matrix of Problem~\ref{prob:affine} and small off-diagonal $s=0.5$. As $\rho\downarrow0$ the
operator passes from strongly monotone ($m=\rho>0$) to merely monotone ($\rho=0$, $F(x)=Jx+q$). The
asymmetry is kept modest: a large $s$ relative to $\rho$ makes $F$ rotation-dominated, which
projection and gap methods handle poorly (the extragradient method does not) and which would confound the
$m\to0$ study. An interior $x^*$ with $F(x^*)=0$ is fixed for every $\rho$, with $q=-(\rho I+J)x^*$. For
small $\rho>0$ the direction constant $\kappa=m>0$, so Corollary~\ref{cor:sublinear} applies and the
best-iterate residual tracks the $O(1/\sqrt N)$ floor as $\rho\downarrow0$.
\end{problem}

\begin{problem}[Mixed, $\ell_1$-regularized variational inequality]\label{prob:mvi}
On $K=[-1,1]^n$, solve the mixed problem for the affine operator $F(x)=Mx+q$ of Problem~\ref{prob:affine}
(defaults $n=100$, conditioning $10$, $s=2$) together with $\varphi(x)=\tau\norm{x}_1$, $\tau=0.5$, using the
scaled forward--backward step of Section~\ref{ss:mvi} with the fixed relaxation step. The $\ell_1$ term is
Lipschitz, so $\partial\varphi$ is bounded and the gap $g_\alpha^{\mathrm{MVI}}$ converges R-linearly
(Theorem~\ref{thm:mvi}); it shifts the solution off $-M^{-1}q$ and sparsifies it as in
Example~\ref{ex:mvi}, so no closed-form $x^*$ is used.
\end{problem}

\begin{problem}[Nash--Cournot electricity-market equilibrium~\cite{Quoc2012}]\label{prob:cournot}
A Nash--Cournot oligopoly of $n^c$ companies owning $n^g$ generating units in total (here $n^c=3$,
$n^g=6$) chooses unit outputs $x\in C^g=\{x\in\R^{n^g}:x_{\min}\le x\le x_{\max}\}$. With inverse demand
$p=378.4-2\sum_l x_l$ and smooth quadratic generation costs
$c_j(x_j)=\tfrac{\hat\alpha_j}{2}x_j^2+\hat\beta_j x_j+\hat\gamma_j$ (data in~\cite{Quoc2012}), the
Nikaido--Isoda reformulation is the equilibrium problem for
\[
  G(x,y)=\inner{A_1 x+B_1 y+a}{y-x}+c(y)-c(x),\qquad A_1=A+\tfrac32 B,\quad B_1=\tfrac12 B,
\]
with $c(x)=\sum_j c_j(x_j)$ and, writing $q^i\in\{0,1\}^{n^g}$ for the incidence vector of company $i$'s
units and $\tilde q^i=\mathbf 1-q^i$,
\[
  A=2\sum_{i=1}^{n^c}\tilde q^i(q^i)^\top,\qquad B=2\sum_{i=1}^{n^c}q^i(q^i)^\top,\qquad
  a=-378.4\sum_{i=1}^{n^c}q^i .
\]
The three companies own units $\{1\}$, $\{2,3\}$, $\{4,5,6\}$; the bounds are $x_{\min}=0$ and
$x_{\max}=(80,80,50,55,30,40)$; and the cost coefficients are
$\hat\alpha=(0.04,0.035,0.125,0.0116,0.05,0.05)$ and $\hat\beta=(2,1.75,1,3.25,3,3)$ (the constants
$\hat\gamma_j$ do not affect the equilibrium)~\cite{Quoc2012}. Because the units partition among the
companies, $A+B=2\,\mathbf 1\mathbf 1^\top\succeq0$, so $G$ is monotone but not strongly, and the quadratic
$c$ keeps $G(x,\cdot)\in C^1$; the equilibrium $x^*$ has no closed form. This economic instance
carries the EP global-convergence and comparison experiments, complementing the strongly monotone
Problem~\ref{prob:smep}.
\end{problem}

\begin{problem}[Synthetic strongly monotone equilibrium]\label{prob:smep}
On $K=[-1,1]^n$, solve the equilibrium problem for
\[
  G(x,y)=\inner{Bx+Ay-a}{y-x} ,\qquad A=\tfrac12 I ,\qquad B=\tfrac32 I+J_0 ,\qquad J_0^\top=-J_0 ,
\]
where $J_0$ is the bidiagonal skew matrix of Problem~\ref{prob:affine} with off-diagonal $s=1.5$ and
$a=(A+B)x^*$ is fixed from an interior $x^*$ (default $n=50$). Then $G(x,x)=0$, the Hessian
$\nabla^2_{yy}G=A+A^\top=I\succ0$, and $G(x,y)+G(y,x)=-\norm{x-y}^2$, so the bifunction modulus is $c=1$
(Definition~\ref{def:bifun}).
The solution $x^*=(A+B)^{-1}a$ and the modulus $c$ are known, so the error bound
$g_\alpha^{\mathrm{EP}}(x)\geq(c-\tfrac{\mu}{2\alpha})\dist(x,\Sol)^2$ and the threshold $\alpha>\mu/(2c)$ are
directly checkable. This is the $n$-dimensional form of Example~\ref{ex:ep}(b).
\end{problem}

\subsection{Setup}\label{ss:setup}

\subsubsection*{Implementation}\label{ss:setup-impl}
The methods are implemented in Julia~1.12.6. Each run uses a single thread on an Intel Core i9-9900K
(3.6\,GHz) with 32\,GB of memory, under Windows~11.

\subsubsection*{Performance measures}\label{ss:setup-meas}
For each run we record whether it converged, the number of iterations, the number of operator evaluations,
and the running time. The operator-evaluation count is the common measure of work: for SGFM it counts the
operator and inner-solve calls, and for the competitors the inner-subproblem solves, which dominate their
cost, so all methods are compared on the same axis. Along the run we record the residual $r$ defined below,
the gap $g_\alpha$, and the distance $\norm{x_k-x^*}$ when $x^*$ is known. Each problem uses a fixed random
seed, and all methods start from the same points, so the runs are reproducible.

\subsubsection*{Stopping rule}\label{ss:setup-stop}
All methods stop at the same accuracy. For a variational inequality we use the natural residual
\[
  r(x)=\norm{x-P_K\!\bigl(x-F(x)\bigr)} ,
\]
which is zero exactly at a solution. For an equilibrium problem we use the same residual with $F(x)$
replaced by $\nabla_2 G(x,x)$, the gradient of $G$ in its second argument at $y=x$; this matches $r$ when
the bifunction encodes a variational inequality. A run stops when $r(x_k)\le\varepsilon$ with
$\varepsilon=10^{-6}$, or at the iteration cap. Each method has its own internal residual, but reporting the
common $r$ makes ``converged'' mean the same accuracy for every method. The residual $r$ is zero only at a
solution and is, up to constants, equivalent to the gap $g_\alpha$ and to $\dist(x_k,\Sol)$ under the
standing assumptions (cf.\ Section~\ref{ss:meriteq}); stopping on it is therefore consistent with the gap-
and distance-based rate of Theorem~\ref{thm:rate}.

\subsubsection*{Iteration cap}\label{ss:setup-cap}
The cap follows from the rate. An R-linear residual $r_k\le Cq^k$ reaches $\varepsilon$ in about
$\log(C/\varepsilon)/\log(1/q)$ steps. We take a fixed multiple of this estimate with a conservative $q$,
so the cap does not cut off a method that would otherwise converge. The value used in each experiment is
stated with that experiment.

\subsubsection*{Parameters}\label{ss:setup-params}
The free parameters of each method (Table~\ref{tab:params}) are fixed once and reused in every experiment.
They come from a search on the test problems, run the same way for every method: a one-at-a-time sweep first
shows how sensitive each parameter is, then a Latin-hypercube search over the joint ranges selects the
setting. The score is the same for all methods --- first the fraction of runs that converge, then the number
of operator evaluations --- and the best setting is kept. Table~\ref{tab:params} reports that setting. For SGFM the line-search constants $\delta_1=10^{-3}$, $\delta_2=10^{-4}$ and the step cap
$\lambda_{\max}=1$ are kept at standard values, not searched.

\subsubsection*{Starting points}\label{ss:setup-starts}
Each problem is run from five fixed feasible starting points, the same five for every method: the lower and
upper corners of the box, its centre, a further corner alternating between the lower and upper bounds, and
one seeded interior point. The corners are the points farthest from the interior solution, so convergence
from them is the direct test of the global statements (Experiment~1).

\begin{table}[H]
  \centering
  \caption{Parameter values used in all experiments, fixed by the search of Section~\ref{ss:setup} and reported to the precision shown. Each row uses the notation of its method
  (Table~\ref{tab:methods}).}
  \label{tab:params}
  \begin{tabular}{@{}ll@{}}
    \toprule
    Method & Values \\
    \midrule
    SGFM (VI, EP)                        & $\alpha=0.195$,\quad $\eta=0.063$,\quad $\sigma=0.421$,\quad $\mu=5.88$,\quad diagonal $\Dm_k$ \\
    SGFM (MVI)                           & $\alpha=0.131$,\quad fixed step $\lambda=0.946$ \\
    Extragradient                       & $\alpha=0.916/L$ \\
    Gap descent (Mastroeni)             & $\rho=5.90$,\quad Armijo constant $3.9\times10^{-4}$ \\
    D-gap descent (Bigi--Passacantando) & $\gamma=0.3$,\quad $\delta=0.4$,\quad $\eta=0.9$ \\
    \bottomrule
  \end{tabular}
\end{table}

\subsection{Convergence and rate}\label{ss:res-rate}

The starting point changes neither whether the method converges nor how fast. Figure~\ref{fig:global} runs
SGFM on the affine variational inequality (Problem~\ref{prob:affine}) from the five points of
Section~\ref{ss:setup}, which include the box corners farthest from the interior solution. All five reach a
residual of $10^{-6}$ in about $120$ iterations, and after a short initial phase the five curves run
parallel. Parallel lines on a logarithmic scale mean a common contraction factor, so the rate is set by the
operator through $m$, $L$, $\mu$, and $\alpha$, while the start only raises or lowers the curve. This is the
global convergence of Theorem~\ref{thm:global}, and it matches the rate being independent of $x_0$ in
Theorem~\ref{thm:rate}.

The contraction is geometric and holds throughout the run, not only near the solution. On a logarithmic
scale the residual of a single run is a straight line from $10^{1}$ to $10^{-7}$ (Figure~\ref{fig:rate}), so
the residual drops by a fixed factor at every step across seven orders of magnitude. The one departure is a
short bend in the first few iterations, where the line search settles on its step, after which the slope is
constant. This is the R-linear rate of Theorem~\ref{thm:rate}.

How fast the method runs depends sharply on how strongly monotone the operator is. Figure~\ref{fig:sublinear}
runs SGFM on the weakly monotone family (Problem~\ref{prob:weak}), whose modulus is $m=\rho$, at $\rho=10^{-1}$,
$10^{-2}$, and $10^{-3}$. At $\rho=10^{-1}$ the operator is strongly monotone and the best residual reaches
$10^{-6}$ within the iteration budget. At $\rho=10^{-2}$ and $10^{-3}$ the method makes little headway over
the same budget and stays on the slow $O(1/\sqrt N)$ floor of Corollary~\ref{cor:sublinear}. The transition
is gradual, not abrupt, which is what the rate predicts: as $m\to0$ the contraction factor $q$ tends to one
and the R-linear rate passes continuously into the sublinear floor. The method is fast when the operator is
genuinely strongly monotone and slow when it is close to merely monotone.

\begin{figure}[H]
  \centering
  \includegraphics[width=0.7\linewidth]{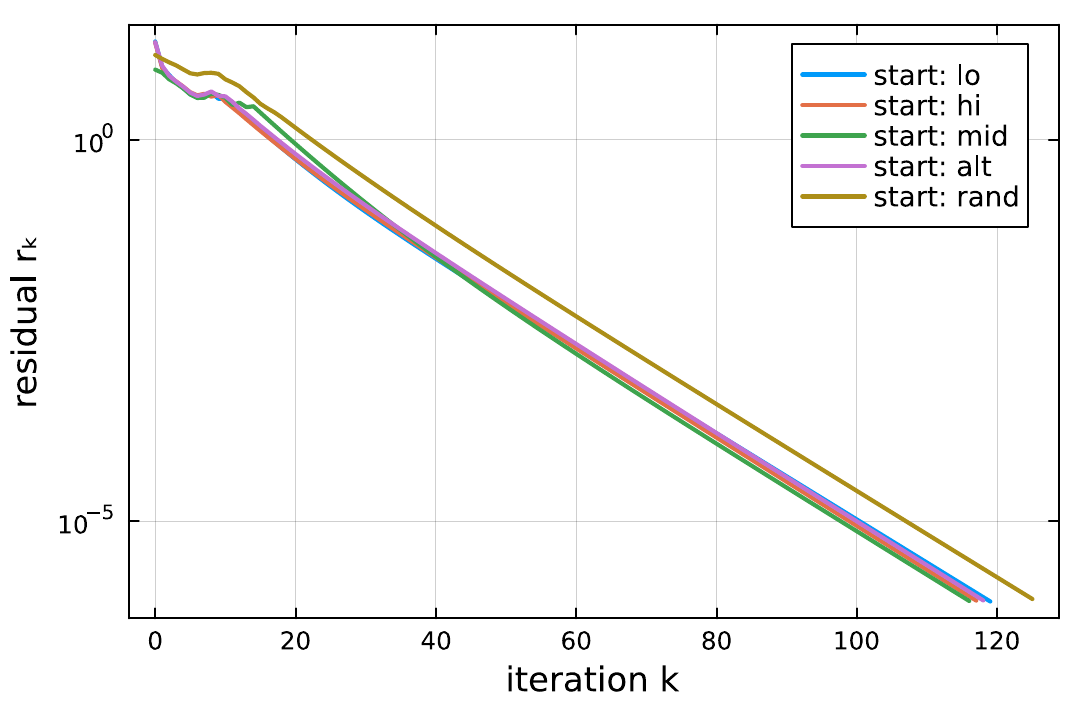}
  \caption{Global convergence. SGFM on the affine variational inequality (Problem~\ref{prob:affine}) from the
  five starting points of Section~\ref{ss:setup}. The residual reaches $10^{-6}$ from every start with a
  common asymptotic slope.}
  \label{fig:global}
\end{figure}

\begin{figure}[H]
  \centering
  \includegraphics[width=0.7\linewidth]{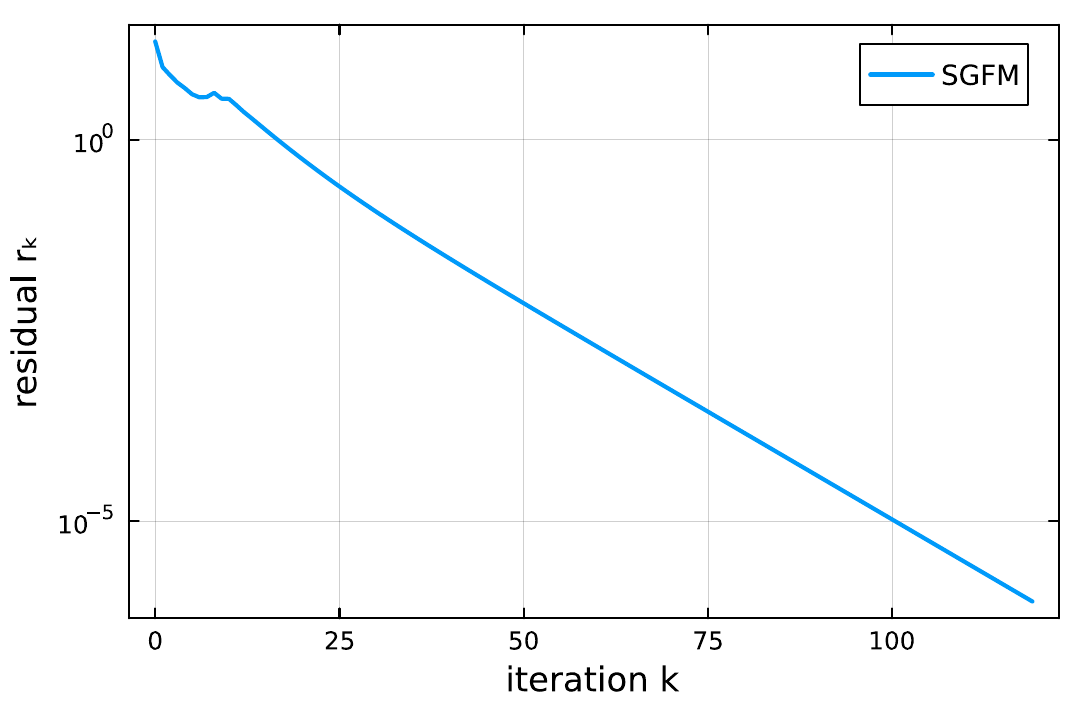}
  \caption{R-linear rate. The residual of SGFM on Problem~\ref{prob:affine} is a straight line on a
  logarithmic scale (Theorem~\ref{thm:rate}).}
  \label{fig:rate}
\end{figure}

\begin{figure}[H]
  \centering
  \includegraphics[width=0.7\linewidth]{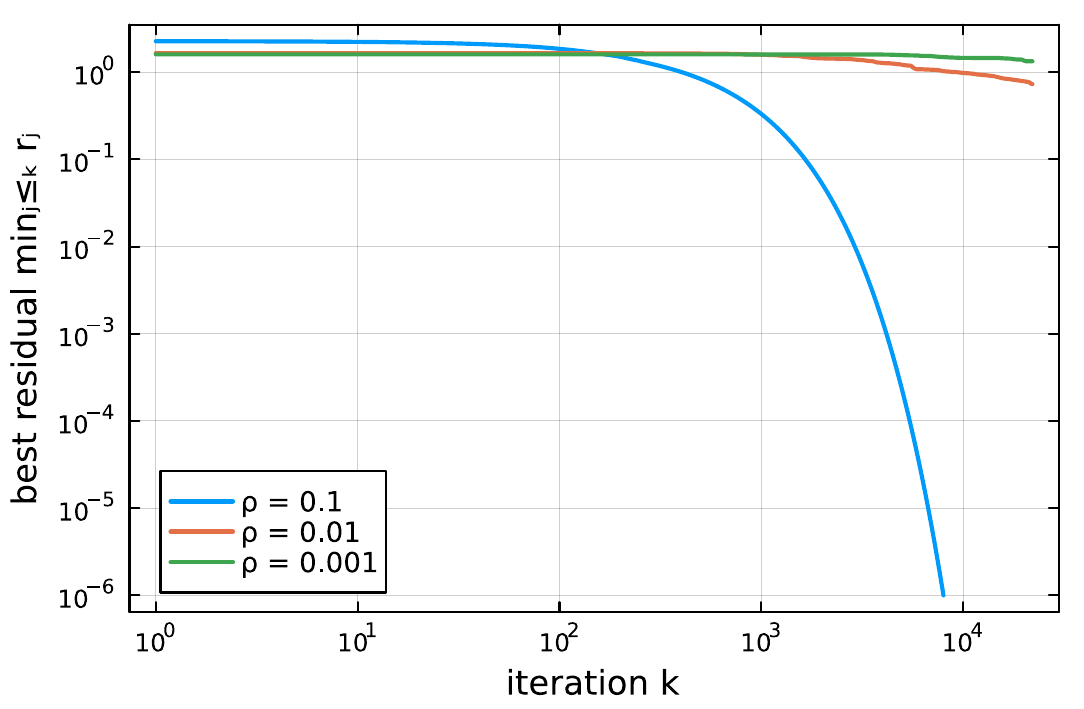}
  \caption{Best-iterate floor as the operator weakens. SGFM on the weakly monotone family
  (Problem~\ref{prob:weak}) at $\rho=10^{-1},10^{-2},10^{-3}$. The strongly monotone member ($\rho=10^{-1}$)
  converges, while smaller $\rho$ approach the $O(1/\sqrt N)$ floor of Corollary~\ref{cor:sublinear}.}
  \label{fig:sublinear}
\end{figure}

\subsection{The variable metric}\label{ss:res-metric}

On a badly conditioned operator the metric decides the speed, and the diagonal metric wins when the
conditioning lies on the diagonal. Figure~\ref{fig:scaling} runs SGFM with the diagonal $\Dm_k$ and with
$\Dm_k=I$ on the affine operator conditioned at $10^3$. Both residuals are straight lines, so both converge
R-linearly as Theorem~\ref{thm:varmetric} guarantees, but the diagonal metric reaches $10^{-6}$ in about
$150$ iterations against about $1300$ for the Euclidean step, almost ten times fewer. The diagonal metric
rescales each coordinate by its own curvature, so the method sees a much smaller effective conditioning.
The size of the gain therefore depends on where the ill-conditioning sits. A diagonal metric removes
conditioning carried on the diagonal of the operator and cannot touch conditioning carried by the
off-diagonal coupling. This is why it helps so much here, where the conditioning is diagonal by construction,
and why a projection method slows on problems whose difficulty is rotational instead.

\begin{figure}[H]
  \centering
  \includegraphics[width=0.7\linewidth]{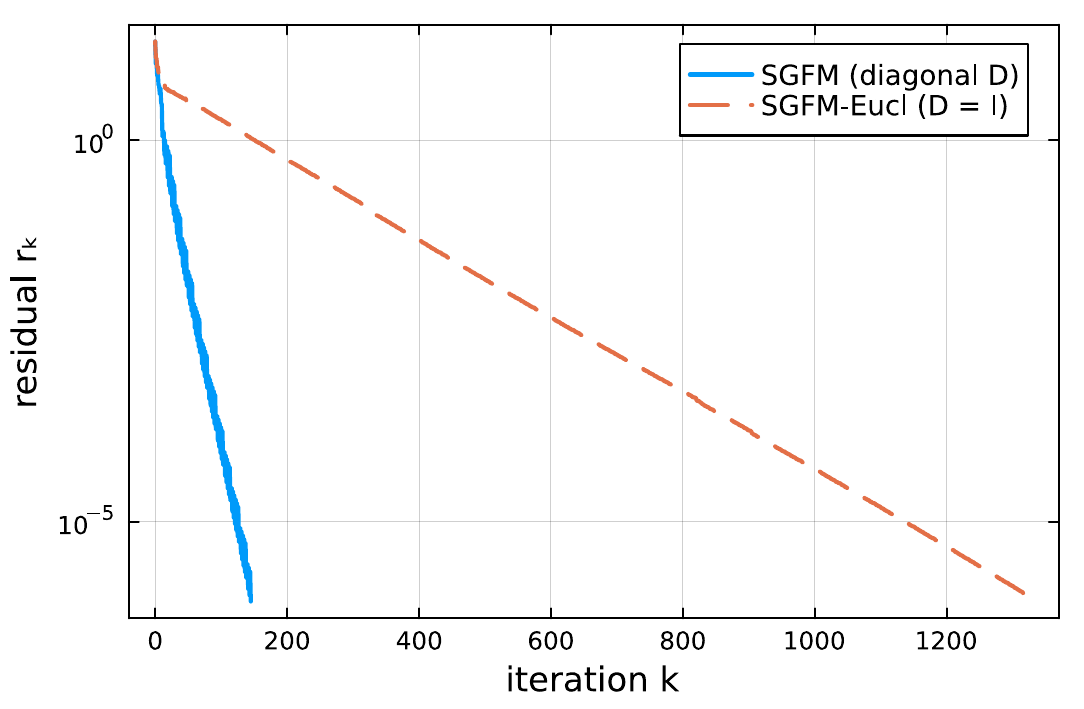}
  \caption{Variable metric. SGFM with the diagonal $\Dm_k$ and with $\Dm_k=I$ (SGFM-Eucl) on the affine
  operator conditioned at $10^3$. The diagonal metric is about an order of magnitude faster and keeps the
  R-linear rate of Theorem~\ref{thm:varmetric}.}
  \label{fig:scaling}
\end{figure}

\subsection{The modified non-monotone line search}\label{ss:res-ls}

On these smooth strongly monotone problems the non-monotone line search neither speeds the method up nor
slows it down, as the construction predicts. Figure~\ref{fig:lsearch} runs SGFM and
its monotone version ($\eta=0$) on the affine operator, and the two residual curves lie on top of each other.
Figure~\ref{fig:envelope} shows the reason. The reference value $T_k$ stays equal to the gap $g_\alpha(x_k)$
because the gap already falls at every step, so the relaxation in the Armijo test is never triggered. The
non-monotone terms come into play only when a step would raise the gap, and a smooth strongly monotone
operator never forces that. The experiment thus measures the cost of the modification
and finds it negligible, leaving the relaxation available for the non-monotone problems it is designed for.

\begin{figure}[H]
  \centering
  \includegraphics[width=0.7\linewidth]{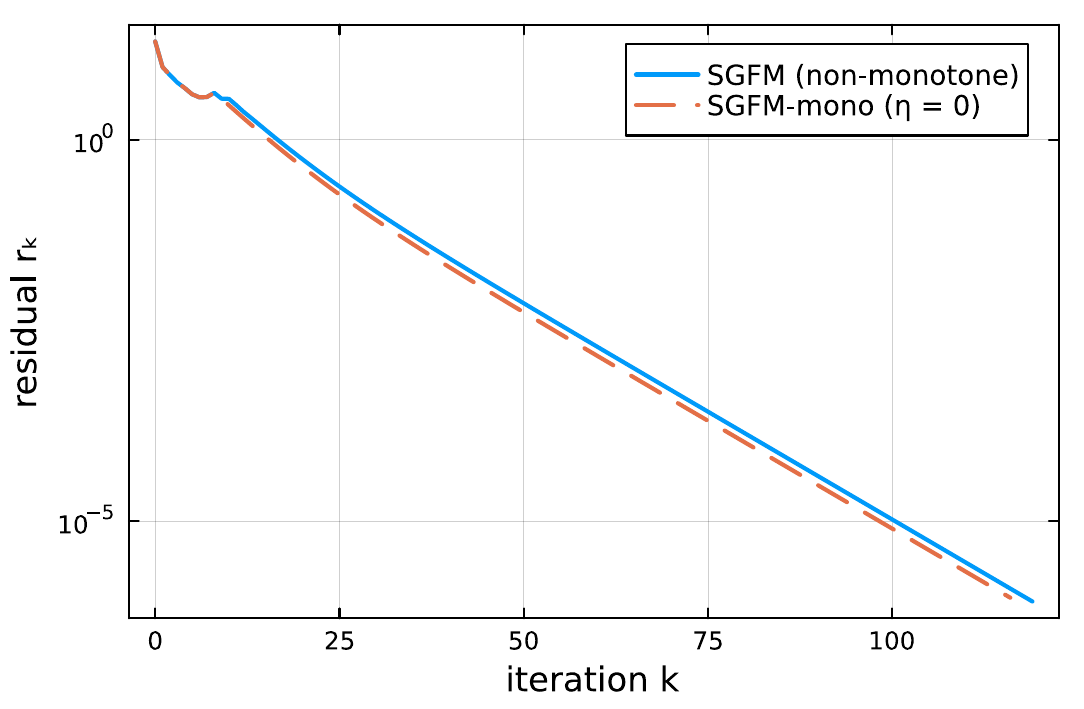}
  \caption{Modified versus monotone line search. SGFM and its monotone variant ($\eta=0$) on
  Problem~\ref{prob:affine}. The residual curves coincide, so the non-monotone terms do not change the rate.}
  \label{fig:lsearch}
\end{figure}

\begin{figure}[H]
  \centering
  \includegraphics[width=0.7\linewidth]{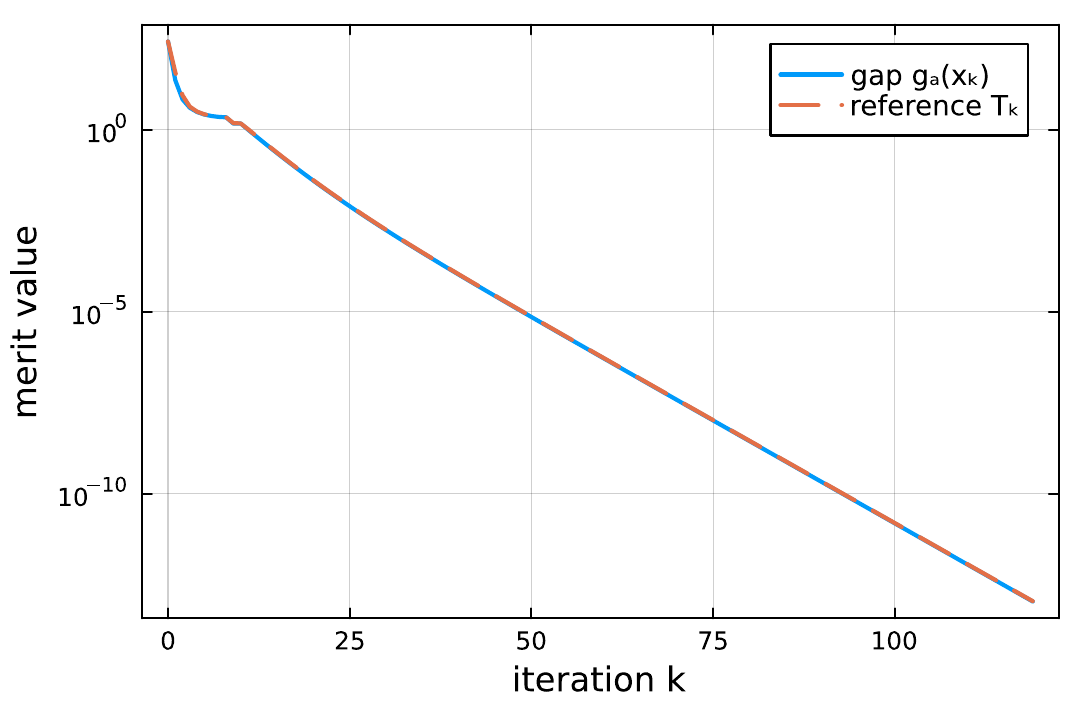}
  \caption{The moving-average reference $T_k$ against the gap $g_\alpha(x_k)$ for SGFM on
  Problem~\ref{prob:affine}. The gap decreases monotonically, so $T_k$ tracks it and the relaxation stays
  dormant.}
  \label{fig:envelope}
\end{figure}

\subsection{Merit equivalence}\label{ss:res-merit}

The gap and the D-gap measure progress in the same way. Figure~\ref{fig:merit} runs SGFM, which reduces the
gap $g_\alpha$, and the method of Bigi and Passacantando, which reduces the D-gap, on the strongly monotone
equilibrium problem (Problem~\ref{prob:smep}). Both merit values fall to zero as straight lines, so both
vanish R-linearly. The two slopes differ because the gap and the D-gap agree only up to constant
factors (Proposition~\ref{prop:merit-equiv}), not because one method is faster than the other.
The consequence for the comparison is that the gap we stop on and the D-gap the competitor stops on track the
same quantity, so the two methods are measured on common ground.

\begin{figure}[H]
  \centering
  \includegraphics[width=0.7\linewidth]{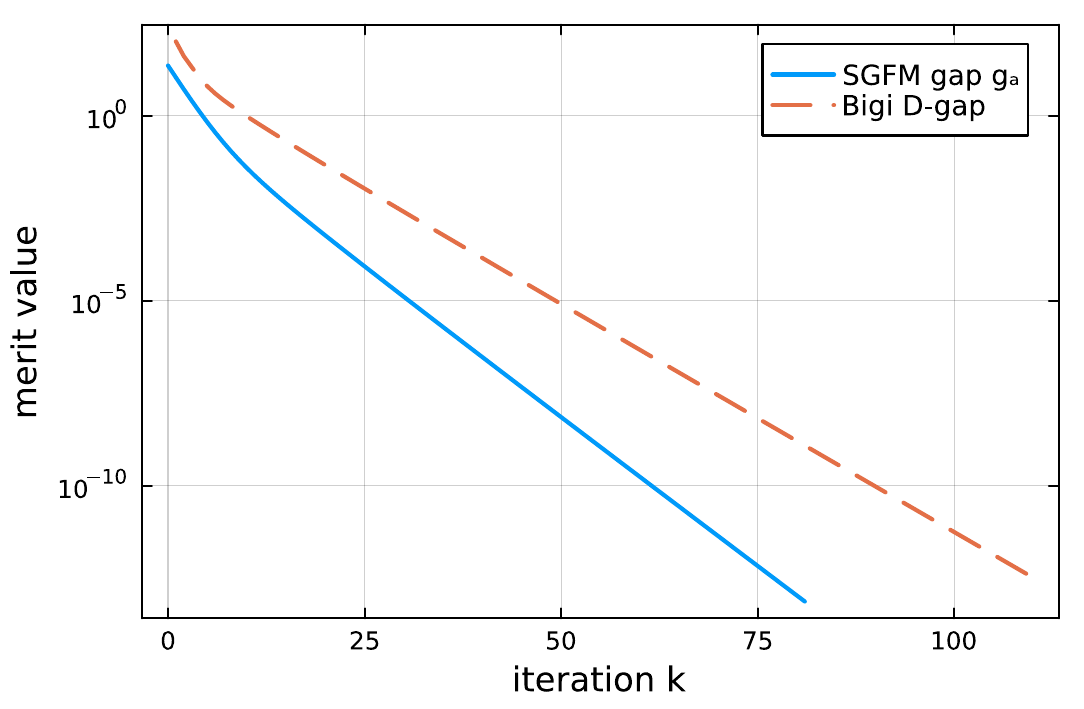}
  \caption{Merit equivalence. The gap $g_\alpha$ along the SGFM iterates and the D-gap along the
  Bigi--Passacantando iterates on the equilibrium problem (Problem~\ref{prob:smep}). Both vanish R-linearly,
  consistent with Proposition~\ref{prop:merit-equiv}.}
  \label{fig:merit}
\end{figure}

\subsection{Comparison with standard methods}\label{ss:res-compare}

SGFM is on par with the specialized methods on the equilibrium and mixed problems and is slower than the
extragradient method on the variational ones. Tables~\ref{tab:compare-vi} and~\ref{tab:compare-ep} give the
median over the five starts.

On the variational problems (Table~\ref{tab:compare-vi}) the extragradient method is the faster solver, and
the gap between the two grows as the operator weakens. On the well-conditioned strongly monotone instance the
counts are close, $94$ iterations for the extragradient method against $118$ for SGFM. On the weakly monotone
instance the extragradient method needs $157$ iterations and SGFM needs about $8000$, because the extragradient
step stays effective as the modulus $m$ shrinks while the SGFM rate falls off with $m$, as
Figure~\ref{fig:sublinear} already showed. On the non-monotone Kojima--Shindo problem SGFM does not reach the
tolerance, while the extragradient method reaches one of the problem's several solutions. Both results follow
the theory. SGFM rests on a strong-monotonicity contraction, and the extragradient method is built for plain
monotone and some non-monotone operators.

On the equilibrium problems (Table~\ref{tab:compare-ep}) SGFM is even with the specialized methods where its
theory applies. On the strongly monotone instance SGFM and Mastroeni's gap descent both converge in $81$
iterations and the D-gap method in $110$. Mastroeni's method uses fewer operator evaluations, $163$ against
$326$, because SGFM does more work per step in its inner solve and line search, so equal iteration counts do
not mean equal cost. On the merely monotone Nash--Cournot instance only the D-gap method converges, the
regime it was designed for. On the mixed problem SGFM solves the $\ell_1$-regularized instance in $73$
iterations using the fixed relaxation step in place of the line search.

SGFM is not the fastest method on any single class, and the comparison does not aim to make it one. Each
competitor is specialized to a single class and tuned for it. What the experiments show is that one method,
carrying one analysis, gives the proved guarantees across all three classes: global convergence and the
R-linear rate under strong monotonicity, the same rate under the variable metric, and the gap error bound.
The price of that reach is the per-iteration work of the gap-based step, and on the variational problems a
higher iteration count than a method built for them alone.

\input{imgs/tab_compare_vi}
\input{imgs/tab_compare_ep}

% ============================================================================
\section{Conclusion}\label{sec:conclusion}
% ============================================================================

We extended the scaled gradient modified non-monotone line search method of Ansari et al.~\cite{Ansari2026} from optimization to
variational inequalities, mixed variational inequalities, and equilibrium problems, through a gap-function
formulation in which the scaled projection coincides with the gap-defining maximizer. For variational and
mixed variational inequalities with a strongly monotone Lipschitz operator the analysis yields global
convergence and an R-linear rate via a gap error bound, under a fixed or a controlled-change variable
metric. For equilibrium problems it yields global convergence and an error bound, with the R-linear rate
reduced to such a bound and left open. The numerical experiments confirm these guarantees on controlled
problems. The residual contracts R-linearly under strong monotonicity, the diagonal metric is about an order
of magnitude faster on ill-conditioned operators, and the method converges from every start across all three
classes. Against specialized methods SGFM is competitive on the equilibrium and mixed problems and slower than
the extragradient method on the variational ones, the price of a single framework that covers all three.

Several questions are left open, each flagged in the analysis. The first is the equilibrium rate. The
Mastroeni-gap error bound of Theorem~\ref{thm:ep} controls the distance to the solution but is only first
order in the residual, so a geometric rate is not automatic. It would follow from a contraction of the
auxiliary-equilibrium map under a Lipschitz-type condition on $G$, once the conflicting requirements on
$\alpha$ are reconciled---the error bound needs $\alpha$ large, a contraction typically small
(Remark~\ref{rem:eprate}). The second is the pseudomonotone and nonsmooth range. When the operator is merely
pseudomonotone or the data nonsmooth, the smooth-gap descent identity fails, and the line search moves from
the gap certificate (V1) to a separating-hyperplane test on the natural residual (V2), carrying the
non-monotone reference onto the residual rather than the gap (Remark~\ref{rem:flagship}). The third is the
strongly quasiconvex class of the base method: because the gap does not inherit strong quasiconvexity
(Remark~\ref{rem:sq}), an R-linear rate there must come from a gap error bound derived directly from strong
quasiconvexity of $G(x,\cdot)$. Extensions to quasi-variational inequalities and to a Hilbert-space setting,
where the contraction and error-bound arguments use no compactness, are longer-term.

%% === BACK MATTER ===

\section*{Declarations}

 \textbf{Conflict of interest:} The author declares that he has no conflict of interest.

\section*{Data Availability}
The Julia code implementing the method, the competitor solvers, the test problems of
Section~\ref{ss:testbed}, and the scripts that generate the tables and figures of
Section~\ref{sec:experiments}, together with the generated benchmark data, are available from the author on
reasonable request and will be released in a public repository upon acceptance. Because every test problem
is specified in full in the manuscript, all experiments are reproducible from the stated data.

\section*{Funding}
This research did not receive any specific grant from funding agencies in the public, commercial, or
not-for-profit sectors. (Funding not applicable).

\section*{AI Use Declaration}
During the preparation of this work the author used Claude (Anthropic) to assist with
manuscript editing, including tightening prose, verifying LaTeX formatting, and checking internal
consistency of cross-references and notation. After using this tool, the author reviewed and edited
the content as needed and takes full responsibility for the content of the publication.

\section*{Acknowledgment}
The author gratefully acknowledges the support provided by King Fahd University of Petroleum \& Minerals
(KFUPM) and its Interdisciplinary Research Center (IRC) for Smart Mobility and Logistics.

\printbibliography
\end{document}

%% file: imgs/tab_compare_vi.tex
% Auto-generated by jcode/scripts/s70_figures_tables.jl — do not edit by hand.
% Preamble needs: \usepackage{tabularx}, booktabs, array, multirow.
\begin{table}[H]
  \centering
  \caption{Comparison on the variational and mixed problems (median over five starts).}
  \label{tab:compare-vi}
  \begin{tabularx}{\linewidth}{@{}ll*{4}{>{\raggedleft\arraybackslash}X}@{}}
    \toprule
    & & & \multicolumn{3}{c}{Median over converged starts} \\
    \cmidrule(lr){4-6}
    Problem & Method & Conv. & Iter. & Op.\ evals & $\norm{x-x^*}$ \\
    \midrule
    \multirow{2}{*}{Problem~\ref{prob:affine}} & SGFM      & 5/5 & 118 & 346 & $2.0\!\times\!10^{-7}$ \\
                         & EG        & 5/5 & 94 & 189 & $2.8\!\times\!10^{-7}$ \\
    \addlinespace
    \multirow{2}{*}{Problem~\ref{prob:ncp}} & SGFM      & 0/5 & --- & --- & --- \\
                         & EG        & 5/5 & 292 & 585 & 3.05 \\
    \addlinespace
    \multirow{2}{*}{Problem~\ref{prob:weak}} & SGFM      & 5/5 & 8006 & 16013 & $1.6\!\times\!10^{-6}$ \\
                         & EG        & 5/5 & 157 & 315 & $9.3\!\times\!10^{-6}$ \\
    \addlinespace
    \multirow{1}{*}{Problem~\ref{prob:mvi}} & SGFM-MVI  & 5/5 & 73 & 74 & --- \\
    \bottomrule
  \end{tabularx}
  \smallskip\noindent{\footnotesize Problem~\ref{prob:ncp} is multi-solution, and the extragradient method converges to a solution other than the reported $x^*$.}
\end{table}

%% file: imgs/tab_compare_ep.tex
% Auto-generated by jcode/scripts/s70_figures_tables.jl — do not edit by hand.
% Preamble needs: \usepackage{tabularx}, booktabs, array, multirow.
\begin{table}[H]
  \centering
  \caption{Comparison on the equilibrium problems (median over five starts).}
  \label{tab:compare-ep}
  \begin{tabularx}{\linewidth}{@{}ll*{4}{>{\raggedleft\arraybackslash}X}@{}}
    \toprule
    & & & \multicolumn{3}{c}{Median over converged starts} \\
    \cmidrule(lr){4-6}
    Problem & Method & Conv. & Iter. & Op.\ evals & $\norm{x-x^*}$ \\
    \midrule
    \multirow{3}{*}{Problem~\ref{prob:cournot}} & SGFM      & 0/5 & --- & --- & --- \\
                         & MAS       & 0/5 & --- & --- & --- \\
                         & Bigi      & 5/5 & 220 & 1036 & --- \\
    \addlinespace
    \multirow{3}{*}{Problem~\ref{prob:smep}} & SGFM      & 5/5 & 81 & 326 & $2.6\!\times\!10^{-7}$ \\
                         & MAS       & 5/5 & 81 & 163 & $2.7\!\times\!10^{-7}$ \\
                         & Bigi      & 5/5 & 110 & 664 & $2.6\!\times\!10^{-7}$ \\
    \bottomrule
  \end{tabularx}
\end{table}